\def\bint{{\ifinner\rlap{\bf\kern.30em--}
\int\else\rlap{\bf\kern.35em--}\int\fi}\ignorespaces}
\def\sbint{{\ifinner\rlap{\bf\kern.32em--}
\hspace{0.078cm}\int\else\rlap{\bf\kern.45em--}\int\fi}\ignorespaces}
\def\red{\color{red}}
\def\rr{{\mathbb R}}
\def\rn{{\mathbb{R}^n}}
\DeclareMathOperator{\esssup}{ess\,sup}
\DeclareMathOperator{\essinf}{ess\,inf}
\def\nn{{\mathbb N}}
\def\zz{{\mathbb Z}}
\def\fz{\infty }
\def\dz{\delta}
\def\lf{\left}
\def\r{\right}
\def\ls{\lesssim}
\def\noz{\nonumber}
\def\wz{\widetilde}
\def\loc{{\mathrm{loc}}}
\def\XXint#1#2#3{{\setbox0=\hbox{$#1{#2#3}{\int}$ }
\vcenter{\hbox{$#2#3$ }}\kern-.6\wd0}}
\newtheorem{theorem}{Theorem}[section]
\newtheorem{lemma}[theorem]{Lemma}
\newtheorem{proposition}[theorem]{Proposition}
\newtheorem{assumption}[theorem]{Assumption}
\theoremstyle{definition}
\newtheorem{remark}[theorem]{Remark}
\newtheorem{definition}[theorem]{Definition}
\renewcommand{\appendix}{\par
\setcounter{section}{0}%
\setcounter{subsection}{0}%
\setcounter{subsubsection}{0}%
\gdef\thesection{\@Alph\c@section}%
\gdef\thesubsection{\@Alph\c@section.\@arabic\c@subsection}%
\gdef\theHsection{\@Alph\c@section.}%
\gdef\theHsubsection{\@Alph\c@section.\@arabic\c@subsection}%
\csname appendixmore\endcsname
}
\numberwithin{equation}{section}
\begin{document}

\arraycolsep=1pt

\title{\bf\Large
Estimates for Littlewood--Paley Operators on
Ball Campanato-Type Function Spaces
\footnotetext{\hspace{-0.35cm} 2020 {\it
Mathematics Subject Classification}. Primary 42B25;
Secondary 42B20, 42B30, 42B35, 46E35, 47A30.
\endgraf {\it Key words and phrases.}
Littlewood--Paley operator, Campanato space, ball quasi-Banach function space, John--Nirenberg space.
\endgraf This project is supported by
the National Natural Science Foundation of China (Grant Nos.\
11971058, 12071197 and 12122102) and the National
Key Research and Development Program of China
(Grant No.\ 2020YFA0712900).}}
\date{}
\author{Hongchao Jia, Dachun Yang\footnote{Corresponding author,
E-mail: \texttt{dcyang@bnu.edu.cn}/{\red June 19, 2022}/Final version.},\ \
Wen Yuan and Yangyang Zhang}
\maketitle

\vspace{-0.8cm}

\begin{center}
\begin{minipage}{13cm}
{\small {\bf Abstract}\quad
Let $X$ be a ball quasi-Banach function space on ${\mathbb R}^n$
and assume that the Hardy--Littlewood maximal operator satisfies the
Fefferman--Stein vector-valued maximal inequality on $X$, and
let $q\in[1,\infty)$ and $d\in(0,\infty)$.
In this article, the authors prove that, for any $f\in \mathcal{L}_{X,q,0,d}(\mathbb{R}^n)$
(the ball Campanato-type function space associated with $X$),
the Littlewood--Paley $g$-function $g(f)$ is either infinite everywhere or finite
almost everywhere and, in the latter case,
$g(f)$ is bounded on $\mathcal{L}_{X,q,0,d}(\mathbb{R}^n)$.
Similar results for both the Lusin-area function and
the Littlewood--Paley $g_\lambda^*$-function
are also obtained. All these results have a
wide range of applications. Particularly, even when
$X$ is the weighted Lebesgue space,
or the mixed-norm Lebesgue space, or
the variable Lebesgue space, or the Orlicz space,
or the Orlicz-slice space, all
these results are new. The proofs of all these results strongly depend on
several delicate estimates of Littlewood--Paley operators
on the mean oscillation of the locally integrable function $f$ on $\mathbb{R}^n$.
Moreover, the same ideas are also used to obtain the corresponding results
for the special John--Nirenberg--Campanato space via congruent cubes.
}
\end{minipage}
\end{center}

\vspace{0.2cm}

%\tableofcontents

%\vspace{0.1cm}

\section{Introduction\label{Introduction}}

It is well known that both Campanato spaces and Hardy spaces play vital roles in
harmonic analysis and partial differential equations
(see, for instance, \cite{CW,EMS1970}).
Recall that John and Nirenberg \cite{JN}
introduced the space $\mathop{\mathrm{BMO}\,}(\rn)$,
namely, the space of functions with bounded mean oscillation,
which proves the dual space
of the Hardy space $H^1(\rn)$ by Fefferman and Stein \cite{FS72}.
Later, as a generalization of the space $\mathop{\mathrm{BMO}\,}(\rn)$,
Campanato \cite{C} introduced
the Campanato space $\mathcal{C}_{\alpha,q,s}(\rn)$
which coincides with
$\mathop{\mathrm{BMO}\,}(\rn)$ when $\alpha=0$.
Moreover, for any given $p\in(0,1)$, any given $q\in[1,\infty]$, and any given integer
$s\in[0,\infty)\cap[n(\frac{1}{p}-1),\infty)$, $\mathcal{C}_{\frac{1}{p}-1,q,s}(\rn)$
was proved to be the dual space of the Hardy space $H^p(\rn)$ by Coifman and Weiss \cite{CW}.

Recall that Stein \cite{EMS} introduced the Littlewood--Paley operators,
namely, the Littlewood--Paley $g$-function, the
Lusin-area function $S$, and the Littlewood--Paley $g_{\lambda}^*$ function.
Since then, estimates of Littlewood--Paley operators on Campanato-type spaces
have attracted more and more attention.
For instance, Wang \cite{W85} proved that,
for any $f\in \mathop{\mathrm{BMO}\,}(\rn)$, $g(f)$
is either infinite everywhere or finite almost everywhere and, in the latter case,
$g(f)$ is bounded on $\mathop{\mathrm{BMO}\,}(\rn)$.
Later, Lu et al. \cite{LTY96} extended the main result of \cite{W85}
to Campanato spaces.
Moreover, Meng et al. \cite{MNY2010} studied the boundedness of the
Lusin-area function $S$ and the Littlewood--Paley $g_{\lambda}^*$ function
on Campanato spaces over spaces of homogeneous type
in the sense of Coifman and Weiss.
Recently, Li et al. \cite{LHY2020} studied the boundedness of multilinear
Littlewood--Paley operators on amalgam-Campanato spaces.
See also, for instance, \cite{HXMY2015, lnyz, MNY2010, MY, Q92,Sun04}
for more studies on Littlewood--Paley operators on Campanato-type spaces.

Sawano et al. \cite{SHYY}
introduced the ball quasi-Banach function space $X$ and
the Hardy space $H_X(\rn)$ associated with $X$.
Ball quasi-Banach function spaces contain many well-known function spaces, for
instance, weighted Lebesgue spaces, Morrey spaces, mixed-norm Lebesgue spaces,
Orlicz--slice spaces, and Musielak--Orlicz spaces,
which implies that the results obtained in \cite{SHYY}
have wide applications.
We refer the reader to
\cite{CWYZ2020,syy,syy2,tyyz,wyy,wyyz,yhyy,yhyy1,zyyw}
for more studies on Hardy-type spaces associated with
ball quasi-Banach function spaces.

As a counterpart of the dual relation
$(H^p(\rn))^*=\mathcal{C}_{\frac{1}{p}-1,q,s}(\rn)$,
the dual space of $H_X(\rn)$ has been widely studied
when $X$ is some concrete function space (see, for instance,
\cite{G1979,HLYY2019,LY2013,NS2012,NS2014,zyyw2019}).
Moreover, Yan et al. \cite{yyy20} introduced the Campanato-type function space
$\mathcal{L}_{X,q,s}(\rn)$ associated with $X$,
which proves the dual space of $H_X(\rn)$ when $X$ is concave.
Very recently, Zhang et al. \cite{zhyy2022}
improved the dual result in \cite{yyy20}
via removing the assumption that $X$ is concave.
To be precise, Zhang et al. \cite{zhyy2022} introduced
a new ball Campanato-type function space $\mathcal{L}_{X,q,s,d}(\rn)$
associated with $X$, and proved that 
$$\left(H_X(\rn)\right)^*=\mathcal{L}_{X,q,s,d}(\rn).$$
However, the mapping properties of Littlewood--Paley operators on
$\mathcal{L}_{X,q,s,d}(\rn)$ are still unknown
even when $X$ is some concrete function space, for instance,
the weighted Lebesgue space, the mixed-norm Lebesgue space,
the variable Lebesgue space, the Orlicz space, and the Orlicz-slice space.

In this article, we study the mapping properties of
Littlewood--Paley operators on the ball Campanato space
$\mathcal{L}_{X,q,0,d}(\rn)$ under the assumption that
the Hardy--Littlewood maximal operator satisfies the
Fefferman--Stein vector-valued maximal inequality on $X$
(see, Assumption \ref{assump1} below).
To be precise, we prove that,
for any $f\in \mathcal{L}_{X,q,0,d}(\mathbb{R}^n)$,
the Littlewood--Paley $g$-function $g(f)$ is either infinite everywhere or finite
almost everywhere and, in the latter case,
$g$ is bounded on $\mathcal{L}_{X,q,0,d}(\mathbb{R}^n)$
(see Theorem \ref{thm-B-Banach} below).
Similar results for both the Lusin-area function and
the Littlewood--Paley $g_\lambda^*$-function
are also obtained (see Theorems \ref{thm-B-Banach-S}
and \ref{g-l-bounded4} below). All these results have a
wide range of applications. Particularly, even when
$X:=L^p_{\omega}(\rn)$ (the weighted Lebesgue space),
$X:=L^{\vec{p}}(\mathbb{R}^n)$
(the mixed-norm Lebesgue space), $X:=L^{p(\cdot)}(\mathbb{R}^n)$
(the variable Lebesgue space), $X:=L^{\Phi}(\mathbb{R}^n)$ (the Orlicz space),
and $X:=(E_{\Phi}^q)_t(\mathbb{R}^n)$ (the Orlicz-slice space), all
these results are new.
The proofs of these results strongly depend on several delicate
estimates of Littlewood--Paley operators on the mean oscillation
of the locally integrable function $f$ on $\rn$.
Due to the generality and the practicability, more
applications of these results are predictable.
Moreover, the same ideas are also used to obtain the corresponding results
for $X:=JN_{p,q,\alpha}^{\mathrm{con}}(\rn)$
(the special John--Nirenberg--Campanato space via congruent cubes).

Indeed, the proofs of the boundedness of Littlewood--Paley operators on Campanato-type spaces
in \cite{L, MNY2010, Sun04} strongly depend on the
definition of the norm of Campanato-type spaces,
in which there exists only one cube. However, in the definition
of the norm of $\mathcal{L}_{X,q,0,d}(\rn)$, there appear countable
balls which cause additional essential difficulties in the proofs of our main results.
To overcome these difficulties, we establish
several delicate estimates of Littlewood--Paley functions
over the mean oscillation of the locally integrable function $f$
(see Lemmas \ref{lem-g-fz}, \ref{lem-s-fz}, and \ref{g-lam-01} below).
Moreover, we make a delicate division of $\rn$ in the proof of the boundedness of
Littlewood--Paley $g_\lambda^*$-function on $\mathcal{L}_{X,q,0,d}(\mathbb{R}^n)$,
and then improve the existing results via widening the ranges of
$\lambda$ in the Littlewood--Paley $g_\lambda^*$-function
(see Theorem \ref{g-l-bounded4} below).

The remainder of this article is organized as follows.

Section \ref{s1} is divided into two subsections to give some basic concepts.
In Subsection \ref{s1-1}, we first recall the concepts
of quasi-Banach function spaces $X$
and two ball Campanato-type function spaces, both
$\mathcal{L}_{X,q,s}(\rn)$ and $\mathcal{L}_{X,q,s,d}(\rn)$, associated with $X$.
Moreover, we present a mild assumption about the
Fefferman--Stein vector-valued maximal inequality on $X$
(see Assumption \ref{assump1} below).
In Subsection \ref{defin of L-P}, we present the definitions
of several general Littlewood--Paley operators,
including the Littlewood--Paley $g$-function,
the Lusin-area function $S$, and the Littlewood--Paley $g_\lambda^*$-function.

Section \ref{g-function} is devoted to studying the mapping properties
of the Littlewood--Paley $g$-function on both
$\mathcal{L}_{X,q,0,d}(\rn)$ and $\mathcal{L}_{X,q,0}(\rn)$. To be precise,
we first establish a technical lemma about $g$-function via borrowing some ideas from \cite{W85}
(see Lemma \ref{lem-g-fz} below).
Then we show that, for any $f\in \mathcal{L}_{X,q,0,d}(\rn)$,
$g(f)$ is either infinite everywhere or finite
almost everywhere and, in the latter case,
$g$ is bounded on $\mathcal{L}_{X,q,0,d}(\rn)$ (see Theorem \ref{thm-B-Banach} below).
Estimates of the Littlewood--Paley $g$-function
on $\mathcal{L}_{X,q,0}(\rn)$ are also obtained in Theorem \ref{thm-B-Banach} below.

In Section \ref{Lusin},
we study the mapping properties of the Lusin-area function $S$ on both
$\mathcal{L}_{X,q,0,d}(\rn)$ and
$\mathcal{L}_{X,q,0}(\rn)$. To be precise, we prove that,
for any $f\in \mathcal{L}_{X,q,0,d}(\rn)$,
$S(f)$ is either infinite everywhere or finite
almost everywhere and, in the latter case,
$S$ is bounded on $\mathcal{L}_{X,q,0,d}(\rn)$
(see Theorem \ref{thm-B-Banach-S} below).
Similar result for the Lusin-area function $S$
on $\mathcal{L}_{X,q,0}(\rn)$ is also obtained.

Section \ref{g*bounded} is devoted to studying the mapping properties of the Littlewood--Paley
$g_\lambda^*$-function on both $\mathcal{L}_{X,q,0}(\rn)$ and $\mathcal{L}_{X,q,0,s}(\rn)$.
To be precise, for any $f\in \mathcal{L}_{X,q,0,d}(\rn)$,
$g_\lambda^*$ is either infinite everywhere or finite
almost everywhere and, in the latter case,
$g_\lambda^*$ is bounded on $\mathcal{L}_{X,q,0,d}(\rn)$
(see Theorem \ref{g-l-bounded4} below).
Similar result for $\mathcal{L}_{X,q,0}(\rn)$ is also obtained in Theorem \ref{g-l-bounded3} below.
Both Theorems \ref{g-l-bounded3} and \ref{g-l-bounded4} even
when $X:=L^p(\rn)$ with $p\in(\frac{n}{n+1},\fz)$, $\dz:=1$, and
$q\in[1,\fz)$ also
improve the existing results via widening the ranges of $\lambda$
[see Remark \ref{g-l-bound-rem-B}(iii) below].

In Section \ref{S-ap}, we apply the main results obtained
in Sections \ref{g-function}, \ref{Lusin}, and \ref{g*bounded}
to five concrete ball quasi-Banach function spaces,
namely, the weighted Lebesgue space $L^p_{\omega}(\rn)$,
the mixed-norm Lebesgue space $L^{\vec{p}}(\rn)$,
the variable Lebesgue space $L^{p(\cdot)}(\rn)$,
the Orlicz space $L^{\Phi}(\rn)$, and the Orlicz-slice space $(E_\Phi^r)_t(\rn)$.
Therefore, all the mapping properties of Littlewood--Paley functions on
Campanato-type spaces $\mathcal{L}_{L^{p}_{\omega}(\rn),q,0,d}(\rn)$,
$\mathcal{L}_{L^{\vec{p}}(\rn),q,0,d}(\rn)$,
$\mathcal{L}_{L^{p(\cdot)}(\rn),q,0,d}(\rn)$,
$\mathcal{L}_{L^{\Phi}(\rn),q,0,d}(\rn)$, and
$\mathcal{L}_{(E_\Phi^r)_t(\rn),q,0,d}(\rn)$ are obtained.
Moreover, using the same ideas,
we also obtain the mapping properties of Littlewood--Paley operators on
the special John--Nirenberg--Campanato spaces via congruent cubes,
$JN_{p,q,\alpha}^{\mathrm{con}}(\rn)$,
introduced in \cite[Definition 1.3]{jtyyz1}
(see Theorems \ref{g-bounded}, \ref{S-bounded''}, and \ref{g*-bounded} below).

Finally, we make some conventions on notation. Let
$\nn:=\{1,2,\ldots\}$, $\zz_+:=\nn\cup\{0\}$, and $\zz_+^n:=(\zz_+)^n$.
We always denote by $C$ a \emph{positive constant}
which is independent of the main parameters,
but it may vary from line to line.
The symbol $f\lesssim g$ means that $f\le Cg$.
If $f\lesssim g$ and $g\lesssim f$, we then write $f\sim g$.
If $f\le Cg$ and $g=h$ or $g\le h$, we then write $f\ls g\sim h$
or $f\ls g\ls h$, \emph{rather than} $f\ls g=h$
or $f\ls g\le h$. For any $q\in[1,\infty]$,
we use $q'$ to denote the \emph{conjugate exponent} of $p$,
namely, $\frac{1}{q}+\frac{1}{q'}=1$.
For any measurable subset $E$ of $\rn$, we denote by $\mathbf{1}_E$ its
characteristic function. We use $\mathbf{0}$ to
denote the \emph{origin} of $\rn$.
Moreover, for any $x\in\rn$ and $r\in(0,\fz)$,
let $B(x,r):=\{y\in\rn:\ |y-x|<r\}$.
Furthermore, for any $\lambda\in(0,\infty)$
and any ball $B(x,r)\subset\rn$ with $x\in\rn$ and
$r\in(0,\fz)$, let $\lambda B(x,r):=B(x,\lambda r)$.

\section{Preliminaries\label{s1}}

In this section, we give some basic concepts.
In Subsection \ref{s1-1}, we present the definitions of both the
ball quasi-Banach function space $X$ and its related
ball Campanato-type function space.
In Subsection \ref{defin of L-P}, we recall the definition of Littlewood--Paley operators
under consideration.

\subsection{Campanato-Type Function Spaces Associated with $X$\label{s1-1}}

The ball quasi-Banach function space was introduced in \cite[Definition 2.2]{SHYY}.
In what follows, we use $\mathscr M(\rn)$ to denote the set of
all measurable functions on $\rn$. Moreover,
for any $x\in\rn$ and $r\in(0,\infty)$, let $B(x,r):=\{y\in\rn:\ |x-y|<r\}$ and
\begin{equation}\label{Eqball}
{\mathbb{B}}(\rn):=\lf\{B(x,r):\ x\in\rn \text{ and } r\in(0,\infty)\r\}.
\end{equation}

\begin{definition}\label{Debqfs}
Let $X\subset\mathscr M(\rn)$ be a quasi-normed linear space equipped with a quasi-norm
$\|\cdot\|_{X}$ which makes sense for all measurable functions on $\rn$. Then $X$ is called a
\emph{ball quasi-Banach function space} if it satisfies:
\begin{enumerate}
\item[(i)] for any $f\in \mathscr M(\rn)$, $\|f\|_X=0$ implies that $f=0$ almost everywhere;

\item[(ii)] for any $f,g\in \mathscr M(\rn)$,
$|g|\le |f|$ almost everywhere implies that $\|g\|_X\le\|f\|_X$;

\item[(iii)] for any $\{f_m\}_{m\in\nn}\subset \mathscr M(\rn)$ and $f\in \mathscr M(\rn)$,
$0\le f_m\uparrow f$ almost everywhere as $m\to\infty$
implies that $\|f_m\|_X\uparrow\|f\|_X$ as $m\to\infty$;

\item[(iv)] $B\in{\mathbb{B}}(\rn)$ implies that $\mathbf{1}_B\in X$,
where ${\mathbb{B}}(\rn)$ is the same as in \eqref{Eqball}.
\end{enumerate}
\end{definition}

\begin{remark}\label{rem-ball-B}
\begin{enumerate}
\item[$\mathrm{(i)}$] Let $X$ be a ball quasi-Banach
function space on $\rn$. Using \cite[Remark 2.6(i)]{yhyy1},
we find that, for any $f\in \mathscr M(\rn)$, $\|f\|_{X}=0$ if and only if $f=0$
almost everywhere.

\item[$\mathrm{(ii)}$] As was mentioned in
\cite[Remark 2.6(ii)]{yhyy1}, we obtain an
equivalent formulation of Definition \ref{Debqfs}
via replacing any ball $B\in\mathbb{B}(\rn)$ by any
bounded measurable set $E$ therein.	

\item[$\mathrm{(iii)}$]
From \cite[Theorem 2]{dfmn2021},
we deduce that both (ii) and (iii) of Definition \eqref{Debqfs}
imply that $X$ is complete and hence a quasi-Banach space.
\end{enumerate}
\end{remark}

Now, we recall the concepts of the convexity and
the concavity of ball quasi-Banach function spaces,
which is a part of \cite[Definition 2.6]{SHYY}.

\begin{definition}\label{Debf}
Let $X$ be a ball quasi-Banach function space and $p\in(0,\infty)$.
\begin{enumerate}
\item[(i)] The \emph{$p$-convexification} $X^p$ of $X$
is defined by setting
$$X^p:=\lf\{f\in\mathscr M(\rn):\ |f|^p\in X\r\}$$
equipped with the \emph{quasi-norm} $\|f\|_{X^p}:=\|\,|f|^p\,\|_X^{1/p}$
for any $f\in X^p$.

\item[(ii)] The space $X$ is said to be
\emph{concave} if there exists a positive constant
$C$ such that, for any $\{f_k\}_{k\in{\mathbb N}}\subset \mathscr M(\rn)$,
$$\sum_{k=1}^{{\infty}}\|f_k\|_{X}
\le C\left\|\sum_{k=1}^{{\infty}}|f_k|\right\|_{X}.$$
In particular, when $C=1$, $X$ is said to be
\emph{strictly concave}.
\end{enumerate}
\end{definition}

Next, we present the definition of the ball Campanato-type function space
associated with $X$. Recall that the \emph{Lebesgue space} $L^q(\rn)$ with $q\in(0,\infty]$
is defined to be the set of all the measurable functions $f$ on $\rn$ such that
$$\|f\|_{L^q(\rn)}:=
\begin{cases}
\displaystyle
\lf[\int_{\rn}|f(x)|^q\, dx\r]^{\frac{1}{q}}
&\text{if}\quad q\in(0,\fz),\\
\displaystyle
\mathop{\mathrm{ess\,sup}}_{x\in\rn}\,|f(x)|
&\text{if}\quad q=\fz
\end{cases}$$
is finite. Besides, for any given $q\in(0,\infty)$,
we use $L^q_{\mathrm{loc}}(\rn)$ to denote
the set of all the measurable functions $f$ such that
$f\mathbf{1}_E\in L^q(\rn)$
for any bounded measurable set $E\subset \rn$
and, for any $f\in L_{\loc}^1(\rn)$
and any finite measurable subset $E\subset\rn$, let
$$f_E:=\fint_Ef(x)\,dx:=\frac{1}{|E|}\int_E f(x)\,dx.$$
Moreover, for any $s\in\zz_+$, we use $\mathcal{P}_s(\rn)$
to denote the set of all the polynomials on $\rn$
with total degree not greater than $s$; for any ball $B\subset\rn$ and
any $g\in L^1_{\loc}(\rn)$, $P^{(s)}_B(g)$
denotes the \emph{minimizing polynomial} of
$g$ with degree not greater than $s$, namely,
$P^{(s)}_B(g)$ is the unique polynomial in $\mathcal{P}_s(\rn)$
such that, for any $P\in\mathcal{P}_s(\rn)$,
$$\int_{B}\lf[g(x)-P^{(s)}_B(g)(x)\r]P(x)\,dx=0.$$
Recall that Yan et al. \cite[Definition 1.11]{yyy20} introduced the following
Campanato space $\mathcal{L}_{X,q,s}(\rn)$ associated with
the ball quasi-Banach function space $X$.

\begin{definition}\label{cqb}
Let $X$ be a ball quasi-Banach function space, $q\in[1,\fz)$,
and $s\in\zz_+$. Then the \emph{Campanato space}
$\mathcal{L}_{X,q,s}(\rn)$, associated with $X$, is defined to be
the set of all the $f\in L^q_{\mathrm{loc}}(\rn)$ such that
$$\|f\|_{\mathcal{L}_{X,q,s}(\rn)}
:=\sup_{B\in {\mathbb{B}}(\rn)}\frac{|B|}{\|\mathbf{1}_B\|_X}
\lf[\fint_{B}\lf|f(x)-P^{(s)}_B(f)(x)\r|^q\,dx\r]^{\frac{1}{q}}$$
is finite.
\end{definition}

In what follows, by abuse of notation, we identify $f\in \mathcal{L}_{X,q,s}(\rn)$
with $f+\mathcal{P}_s(\rn)$.

\begin{remark}\label{rem-ball-B3}
\begin{enumerate}
\item[\rm (i)]	
Let $f\in L^{1}_{\mathrm{loc}}(\rn)$, $s\in\zz_+$, and
$B\in{\mathbb{B}}(\rn)$. By \cite[p.\,54, Lemma 4.1]{L},
we find that there exists a positive
constant $C$, independent of both $f$ and $B$, such that
\begin{align}\label{rem-ball-B2-01}
\left\|P_{B}^{(s)}(f)\right\|_{L^{\fz}(B)}
\le C\fint_B|f(x)|\,dx.
\end{align}
Moreover, it is well known that $P_{B}^{(0)}(f)=f_B$.

\item[\rm (ii)]	
Let $q\in[1,\infty)$, $s\in\zz_+$, and $\alpha\in[0,\fz)$.
Recall that Campanato \cite{C} introduced the \emph{Campanato space}
$\mathcal{C}_{\alpha,q,s}(\rn)$ which is defined to be the
set of all the $f\in L^q_\loc(\rn)$ such that
\begin{align}\label{campanato}
\|f\|_{\mathcal{C}_{\alpha,q,s}(\rn)} \
:=\sup |B|^{-\alpha}\left[\fint_{B}\left|f(x)
-P^{(s)}_B(f)(x)\right|^{q}\right]^{\frac{1}{q}}<\infty,
\end{align}
where the supremum is taken over all balls $B\in \mathbb{B}(\rn)$.
It is well known that $\mathcal{C}_{\alpha,q,s}(\rn)$ when $\alpha=0$
coincides with the space $\mathrm{BMO}\,(\rn)$.
Moreover, when $X:=L^{\frac{1}{\alpha+1}}(\rn)$, then
$\mathcal{L}_{X,q,s}(\rn)=\mathcal{C}_{\alpha,q,s}(\rn)$.
\end{enumerate}
\end{remark}

Recall that, in \cite[Definition 3.2]{zhyy2022}, Zhang et al. introduced the following
new ball Campanato space $\mathcal{L}_{X,q,s,d}(\rn)$ associated with
the ball quasi-Banach function space $X$.

\begin{definition}\label{2d2}
Let $X$ be a ball quasi-Banach function space, $q\in[1,{\infty})$, $d\in(0,\infty)$,
and $s\in\zz_+$. Then the \emph{ball Campanato-type function space}
$\mathcal{L}_{X,q,s,d}(\rn)$, associated with $X$, is defined to be
the set of all the $f\in L^q_{\mathrm{loc}}({{\rr}^n})$ such that
\begin{align*}
\|f\|_{\mathcal{L}_{X,q,s,d}(\rn)}
:&=\sup
\lf\|\lf\{\sum_{i=1}^m
\lf(\frac{{\lambda}_i}{\|{\mathbf{1}}_{B_i}\|_X}\r)^d
{\mathbf{1}}_{B_i}\r\}^{\frac1d}\r\|_{X}^{-1}\\
&\quad\times\sum_{j=1}^m\frac{{\lambda}_j|B_j|}{\|{\mathbf{1}}_{B_j}\|_{X}}
\lf[\fint_{B_j}\lf|f(x)-P^{(s)}_{B_j}(f)(x)\r|^q \,dx\r]^\frac1q
\end{align*}
is finite, where the supremum is taken over all
$m\in\nn$, $\{B_j\}_{j=1}^m\subset {\mathbb{B}}(\rn)$, and
$\{\lambda_j\}_{j=1}^m\subset[0,\infty)$ with $\sum_{j=1}^m\lambda_j\neq0$.
\end{definition}

In what follows, by abuse of notation, we identify $f\in \mathcal{L}_{X,q,s,d}(\rn)$
with $f+\mathcal{P}_s(\rn)$.

\begin{remark}\label{rem-ball-B2}
Let $q\in[1,\fz)$, $s\in\zz_+$, $d\in(0,1]$, and
$X$ be a concave ball quasi-Banach function space.
It was proved in \cite[Proposition 3.7]{zhyy2022} that
$\mathcal{L}_{X,q,s,d}({{\rr}^n})=\mathcal{L}_{X,q,s}(\rn)$
with equivalent quasi-norms.
\end{remark}

To obtain the main results of this article,
we need the following mild assumption of the
Fefferman--Stein vector-valued maximal inequality on $X$
(see, for instance, \cite[Assumption 2.6]{zhyy2022}).

\begin{assumption}\label{assump1}
Let $X$ be a ball quasi-Banach function
space. Assume that there exists a $p_-\in(0,\infty)$ such that,
for any given $p\in(0,p_-)$ and $u\in(1,\infty)$,
there exists a positive constant $C$ such that,
for any $\{f_j\}_{j=1}^\infty\subset\mathscr M(\rn)$,
\begin{align}\label{assume-fs}
\lf\|\lf\{\sum_{j\in\nn}\lf[\mathcal{M}(f_j)\r]^u\r\}^{\frac{1}{u}}\r\|_{X^{1/p}}
\le C\lf\|\lf(\sum_{j\in\nn}|f_j|^u\r)^{\frac{1}{u}}\r\|_{X^{1/p}}.
\end{align}
\end{assumption}

\subsection{Littlewood--Paley Operators\label{defin of L-P}}

In this subsection, we recall the definition of Littlewood--Paley operators.
We first present the concept of Littlewood--Paley functions.

\begin{definition}\label{phi1}
A scalar-valued function $\varphi\in L^1(\rn)$ is called a
\emph{Littlewood--Paley function}
if there exists a positive constant $C$ and a $\dz\in(0,\fz)$ such that
\begin{enumerate}
\item[\rm (i)]
\begin{align}\label{g-1}
\int_{\rn}\varphi(x)\,dx=0;
\end{align}

\item[\rm (ii)]
for any $x\in \rn$,
\begin{align}\label{g-2}
|\varphi(x)|
\leq\frac{C}{(1+|x|)^{n+\delta}};
\end{align}

\item[\rm (iii)]
for any $x\in\rn$,
\begin{align}\label{g-3'}
|\nabla \varphi(x)|\leq\frac{C}{(1+|x|)^{n+1+\dz}},
\end{align}
here and thereafter,
$\nabla:=(\frac{\partial}{\partial x_1},\ldots,\frac{\partial}{\partial x_n})$.
\end{enumerate}
\end{definition}

Now, we recall the concept of Littlewood--Paley operators,
namely, the Littlewood--Paley $g$-function,
the Lusin-area function $S$, and the
Littlewood--Paley $g_\lambda^*$-function.
In what follows, for any measurable function
$\varphi$ on $\rn$, and for any $t\in(0,\fz)$,
let $$\varphi_t(\cdot):=\frac{1}{t^n}\varphi\lf(\frac{\cdot}{t}\r).$$
Moreover, let
$$\rr^{n+1}_+
:=\lf\{(\xi,t)\in \rr^{n+1}:\ \xi\in\rn,\ t\in(0,\fz)\r\}$$
and, for any $x\in\rn$,
$$\Gamma(x):=\lf\{(\xi,t)\in\rr^{n+1}_+:\ |\xi-x|<t\r\}.$$

\begin{definition}\label{g-f}
Let $\varphi$ be the same as in Definition \ref{phi1}.
For any $f\in L^1_{\mathrm{loc}}(\rn)$,
the \textit{Littlewood--Paley $g$-function $g(f)$},
the \textit{Lusin-area function $S(f)$}, and the
\textit{Littlewood--Paley $g_\lambda^*$-function $g_\lambda^*(f)$}
with $\lambda\in(1,\fz)$ of $f$ are defined, respectively,
by setting, for any $x\in\rn$,
\begin{align}\label{g-4}
g(f)(x):=\lf[\int_0^{\infty}\lf|f\ast \varphi_t(x)\r|^2
\,\frac{dt}{t}\r]^{\frac{1}{2}},
\end{align}
\begin{align}\label{S}
S(f)(x):=\lf[\iint_{\Gamma(x)}\lf|f\ast\varphi_t(\xi)
\r|^2\,\frac{d\xi\,dt}{t^{n+1}}\r]^{\frac{1}{2}},
\end{align}
and
\begin{align}\label{lam-function}
g_{\lambda}^{*}(f)(x):=\lf[\iint_{\rr_+^{n+1}}
\lf(\frac{t}{t+|x-\xi|}\r)^{\lambda n}
\lf|f\ast \varphi_t(\xi)\r|^2\,\frac{d\xi\,dt}{t^{n+1}}\r]^{\frac{1}{2}}.
\end{align}
\end{definition}

\section{Littlewood--Paley $g$-Functions on Both
$\mathcal{L}_{X,q,0}(\rn)$ and $\mathcal{L}_{X,q,0,d}(\rn)$\label{g-function}}

This section is devoted to studying the boundedness of the Littlewood--Paley
$g$ function on both $\mathcal{L}_{X,q,0}(\rn)$ and $\mathcal{L}_{X,q,0,d}(\rn)$.
To this end, we first introduce the subspace
$\mathcal{L}_{X,q,0}^{\mathrm{low}}(\rn)$ of $\mathcal{L}_{X,q,s}(\rn)$,
and the subspace $\mathcal{L}_{X,q,0,d}^{\mathrm{low}}(\rn)$
of $\mathcal{L}_{X,q,s,d}(\rn)$.

\begin{definition}\label{cqb'}
Let $X$ be a ball quasi-Banach function space and $q\in[1,\fz)$.
Then the \emph{space} $\mathcal{L}_{X,q,0}^{\mathrm{low}}(\rn)$
is defined to be the set of all the $f\in L^q_{\mathrm{loc}}(\rn)$ such that
$$\|f\|_{\mathcal{L}_{X,q,0}^{\mathrm{low}}(\rn)}
:=\sup_{B\subset\rn}\frac{|B|}{\|\mathbf{1}_B\|_X}\lf[
\fint_{B}\lf\{f(x)-\inf_{\wz x\in B}f(\wz x)\r\}^q\,dx\r]^{\frac{1}{q}}<\infty,$$
where the supremum is taken over all balls $B\in{\mathbb{B}}(\rn)$.
\end{definition}

\begin{definition}\label{2d2'}
Let $X$ be a ball quasi-Banach function space, $q\in[1,{\infty})$, and $d\in(0,\infty)$.
Then the \emph{space} $\mathcal{L}_{X,q,0,d}^{\mathrm{low}}(\rn)$ is defined to be
the set of all the $f\in L^q_{\mathrm{loc}}({{\rr}^n})$ such that
\begin{align*}
\|f\|_{\mathcal{L}_{X,q,0,d}^{\mathrm{low}}(\rn)}
:=&\,\sup
\lf\|\lf\{\sum_{i=1}^m
\lf(\frac{{\lambda}_i}{\|{\mathbf{1}}_{B_i}\|_X}\r)^d
{\mathbf{1}}_{B_i}\r\}^{\frac1d}\r\|_{X}^{-1}
\\
&\quad\times\sum_{j=1}^m\frac{{\lambda}_j|B_j|}{\|{\mathbf{1}}_{B_j}
\|_{X}}\lf[\fint_{B_j}\lf\{f(x)
-\inf_{\wz x\in B_j}f(\wz x)\r\}^q \,dx\r]^\frac1q
\end{align*}
is finite, where the supremum is taken over all
$m\in\nn$, $\{B_j\}_{j=1}^m\subset {\mathbb{B}}(\rn)$, and
$\{\lambda_j\}_{j=1}^m\subset[0,\infty)$ with $\sum_{j=1}^m\lambda_j\neq0$.
\end{definition}

\begin{remark}
Let $X$ be a concave ball quasi-Banach function space,
$q\in[1,{\infty})$, and $d\in(0,1]$. In this case,
by a proof similar to that of \cite[Proposition 3.7]{zhyy2022},
we find that $\mathcal{L}_{X,q,0,d}^{\mathrm{low}}({{\rr}^n})
=\mathcal{L}_{X,q,0}^{\mathrm{low}}(\rn)$
with equivalent quasi-norms.
\end{remark}

Next, we show that $\mathcal{L}_{X,q,0}^{\mathrm{low}}(\rn)\subset \mathcal{L}_{X,q,s}(\rn)$
and $\mathcal{L}_{X,q,0,d}^{\mathrm{low}}(\rn)\subset \mathcal{L}_{X,q,s,d}(\rn)$.

\begin{proposition}\label{pro-sub}
Let $X$ be a ball quasi-Banach function space,
$q\in[1,{\infty})$, $s\in\zz_+$, and $d\in(0,\infty)$.
Then $\mathcal{L}_{X,q,0}^{\mathrm{low}}(\rn)\subset \mathcal{L}_{X,q,s}(\rn)$
and $\mathcal{L}_{X,q,0,d}^{\mathrm{low}}(\rn)\subset \mathcal{L}_{X,q,s,d}(\rn)$.
\end{proposition}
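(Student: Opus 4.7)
The plan is to reduce both claimed inclusions to a single ball-wise comparison estimate: for every ball $B\in\mathbb{B}(\rn)$ and every $f\in L^q_{\mathrm{loc}}(\rn)$,
\begin{align*}
\left[\fint_B\left|f(x)-P_B^{(s)}(f)(x)\right|^q\,dx\right]^{\frac{1}{q}}
\lesssim \left[\fint_B\left\{f(x)-\inf_{\wz x\in B}f(\wz x)\right\}^q\,dx\right]^{\frac{1}{q}},
\end{align*}
with a positive constant depending only on $n$, $q$, and $s$. Once this is in hand, the first inclusion follows by multiplying both sides by $|B|/\|\mathbf{1}_B\|_X$ and taking the supremum over $B\in\mathbb{B}(\rn)$. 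The second inclusion follows in the same way: apply the ball-wise estimate to each ball $B_j$ in an arbitrary admissible collection $\{(\lambda_j,B_j)\}_{j=1}^m$, weight by $\lambda_j|B_j|/\|\mathbf{1}_{B_j}\|_X$, sum over $j$, and then divide by the common outer factor $\|\{\sum_i(\lambda_i/\|\mathbf{1}_{B_i}\|_X)^d\mathbf{1}_{B_i}\}^{1/d}\|_X$ that appears identically in both norms.

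To prove the ball-wise estimate, set $c_B:=\inf_{\wz x\in B}f(\wz x)$. If $c_B=-\infty$ the right-hand side is infinite and there is nothing to prove, so we may assume $c_B\in\rr$. Since $c_B$ is a constant, it belongs to $\mathcal{P}_0(\rn)\subset\mathcal{P}_s(\rn)$, and the uniqueness in the defining orthogonality of the minimizing polynomial forces $P_B^{(s)}(c_B)=c_B$. Combined with the linearity of $P_B^{(s)}$, which is the $L^2(B)$-orthogonal projection onto $\mathcal{P}_s(\rn)$, this yields
\begin{align*}
f-P_B^{(s)}(f)=(f-c_B)-P_B^{(s)}(f-c_B).
\end{align*}
The triangle inequality in $L^q(B)$ (normalized by $|B|$) then splits the left-hand side of the desired inequality into two pieces. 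The first piece is $[\fint_B|f-c_B|^q\,dx]^{1/q}$, which already equals the right-hand side of the target since $f-c_B\ge 0$ pointwise on $B$ by the choice of $c_B$.

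For the second piece, I would invoke the $L^\infty$ bound \eqref{rem-ball-B2-01} of Remark \ref{rem-ball-B3}(i) applied to $f-c_B\in L^1_{\mathrm{loc}}(\rn)$, together with H\"older's inequality (using $q\ge 1$), to obtain
\begin{align*}
\left\|P_B^{(s)}(f-c_B)\right\|_{L^\infty(B)}
\le C\fint_B|f(x)-c_B|\,dx
\le C\left[\fint_B|f(x)-c_B|^q\,dx\right]^{\frac{1}{q}},
\end{align*}
which in particular controls $[\fint_B|P_B^{(s)}(f-c_B)|^q\,dx]^{1/q}$ by the same quantity, closing the ball-wise estimate. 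No serious obstacle is anticipated; the only delicate point is the reproducing identity $P_B^{(s)}(P)=P$ for $P\in\mathcal{P}_s(\rn)$, which is immediate from the uniqueness built into the definition of $P_B^{(s)}$, and the observation that the implicit constants from \eqref{rem-ball-B2-01} are independent of both $f$ and $B$, which is exactly what allows the supremum/summation step to preserve the bound uniformly across all admissible collections.
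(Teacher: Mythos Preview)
Your proof is correct and follows essentially the same approach as the paper: both establish the identical ball-wise estimate via the reproducing property $P_B^{(s)}(c_B)=c_B$, the triangle inequality, the $L^\infty$ bound \eqref{rem-ball-B2-01}, and H\"older's inequality, and then plug into the definitions. Your treatment of the case $c_B=-\infty$ is a minor extra care not made explicit in the paper, but otherwise the arguments coincide.
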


\begin{proof}
Let $X$, $q$, $s$, and $d$ be the same as in the present proposition.
From \eqref{rem-ball-B2-01}, the fact that $P_{B}^{(s)}(P)=P$ for any
ball $B\in \mathbb B(\rn)$ and any $P\in \mathcal{P}_s(\rn)$, and the H\"older inequality,
we deduce that, for any $f\in L^1_{\loc}(\rn)$ and any ball $B\in \mathbb B(\rn)$,
\begin{align}\label{rem-sub}
&\lf[\fint_{B}\lf|f(x)-P_{B}^{(s)}(f)(x)\r|^q\,dx\r]^{\frac{1}{q}}\\
&\quad\leq\lf[\fint_{B}\lf\{f(x)
-\inf_{\widetilde{x}\in B}f(\widetilde{x})\r\}^q\,dx\r]^{\frac{1}{q}}
+\lf[\fint_{B}\lf|P_{B}^{(s)}(f)(x)
-\inf_{\widetilde{x}\in B}f(\widetilde{x})\r|^q\,dx\r]^{\frac{1}{q}}\noz\\
&\quad\leq\lf[\fint_{B}\lf\{f(x)
-\inf_{\widetilde{x}\in B}f(\widetilde{x})\r\}^q\,dx\r]^{\frac{1}{q}}
+\lf\|P_{B}^{(s)}\lf(f-\inf_{\widetilde{x}\in B}f(\widetilde{x})\r)\r\|_{L^{\fz}(B)}\noz\\
&\quad\ls\lf[\fint_{B}\lf\{f(x)
-\inf_{\widetilde{x}\in B}f(\widetilde{x})\r\}^q\,dx\r]^{\frac{1}{q}}
+\fint_{B}\lf\{f(x)-\inf_{\widetilde{x}\in B}f(\widetilde{x})\r\}\,dx\noz\\
&\quad\ls\lf[\fint_{B}\lf\{f(x)
-\inf_{\widetilde{x}\in B}f(\widetilde{x})\r\}^q\,dx\r]^{\frac{1}{q}}.\noz
\end{align}
This, together with the definitions of $\mathcal{L}_{X,q,0}^{\mathrm{low}}(\rn)$,
$\mathcal{L}_{X,q,s}(\rn)$, $\mathcal{L}_{X,q,0,d}^{\mathrm{low}}(\rn)$,
and $\mathcal{L}_{X,q,s,d}(\rn)$, then
finishes the proof of Proposition \ref{pro-sub}.	
\end{proof}

Now, we present the main result of this section.

\begin{theorem}\label{thm-B-Banach}
Let $\dz\in(0,1]$ and $g$ be the Littlewood--Paley $g$-function in \eqref{g-4}.
Let both $X$ and $p_-\in(\frac{n}{n+\dz},\fz)$ satisfy Assumption \ref{assump1},
$q\in(1,\fz)$, and $d\in(\frac{n}{n+\dz},\fz)$. Then,
for any $f\in \mathcal{L}_{X,q,0,d}(\rn)$ [resp., $f\in \mathcal{L}_{X,q,0}(\rn)$],
$g(f)$ is either infinite everywhere or finite almost everywhere and, in the latter case,
there exists a positive constant $C$, independent of $f$, such that
\begin{align}\label{thm-b-1}
\|g(f)\|_{\mathcal{L}^{\mathrm{low}}_{X,q,0,d}(\rn)}
\leq C\|f\|_{\mathcal{L}_{X,q,0,d}(\rn)}
\end{align}
\begin{align*}
\lf[\mathrm{resp}.,\ \|g(f)\|_{\mathcal{L}^{\mathrm{low}}_{X,q,0}(\rn)}
\leq C\|f\|_{\mathcal{L}_{X,q,0}(\rn)}\r].
\end{align*}
\end{theorem}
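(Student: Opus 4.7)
The plan has three movements. For the dichotomy, if $g(f)(x_{0})<\infty$ at some $x_{0}\in\rn$, I fix any ball $B\ni x_{0}$ and split
$f=f_{2B}+(f-f_{2B})\mathbf{1}_{2B}+(f-f_{2B})\mathbf{1}_{(2B)^{c}}=:f_{2B}+h_{1}+h_{2}.$
The cancellation \eqref{g-1} kills the constant term; $h_{1}\in L^{q}(\rn)$ gives $g(h_{1})\in L^{q}(\rn)$ via the scalar $L^{q}$-boundedness of $g$ that Assumption \ref{assump1} provides; and \eqref{g-2}--\eqref{g-3'} together with the Campanato-type growth of $f$ yield $g(h_{2})(x)<\infty$ pointwise on $B$. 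Exhausting $\rn$ by such balls then gives $g(f)<\infty$ almost everywhere.

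For the norm inequality, the central ingredient is Lemma \ref{lem-g-fz}, which supplies the pointwise oscillation estimate: for every ball $B$ and every $x,\widetilde{x}\in B$,
\begin{align*}
\lf|g(f)(x)-g(f)(\widetilde{x})\r|
\lesssim \lf[\text{local in }x,\widetilde{x}\r]+\sum_{k=0}^{\infty}2^{-k\delta}\lf[\fint_{2^{k+1}B}\lf|f(y)-f_{2^{k+1}B}\r|^{q}dy\r]^{\frac{1}{q}},
\end{align*}
in which the two local terms, once raised to the $q$-th power and averaged, are absorbed into the $k=0$ tail summand. Choosing $\widetilde{x}\in B=B_{j}$ as a near-minimizer of $g(f)$ and averaging in $L^{q}(B_{j})$ gives
\begin{align*}
\lf[\fint_{B_{j}}\lf\{g(f)(x)-\inf_{\widetilde{x}\in B_{j}}g(f)(\widetilde{x})\r\}^{q}dx\r]^{\frac{1}{q}}
\lesssim \sum_{k=0}^{\infty}2^{-k\delta}\lf[\fint_{2^{k+1}B_{j}}\lf|f-f_{2^{k+1}B_{j}}\r|^{q}\r]^{\frac{1}{q}}.
\end{align*}

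To conclude, I multiply by $\lambda_{j}|B_{j}|\|\mathbf{1}_{B_{j}}\|_{X}^{-1}$, sum over $j$, and exchange the orders of summation. For each fixed $k$, I rewrite
$\frac{\lambda_{j}|B_{j}|}{\|\mathbf{1}_{B_{j}}\|_{X}}=2^{-(k+1)n}\frac{\mu_{j}^{(k)}|2^{k+1}B_{j}|}{\|\mathbf{1}_{2^{k+1}B_{j}}\|_{X}}$
with $\mu_{j}^{(k)}:=\lambda_{j}\|\mathbf{1}_{2^{k+1}B_{j}}\|_{X}\|\mathbf{1}_{B_{j}}\|_{X}^{-1}$; the inner $j$-sum is then precisely of the form controlled by $\|f\|_{\mathcal{L}_{X,q,0,d}(\rn)}$. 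After cancellations (note $\mu_{j}^{(k)}/\|\mathbf{1}_{2^{k+1}B_{j}}\|_{X}=\lambda_{j}/\|\mathbf{1}_{B_{j}}\|_{X}$), the remaining task is to prove, for some parameter $v>0$ to be chosen,
\begin{align*}
\lf\|\lf\{\sum_{j}\lf(\frac{\lambda_{j}}{\|\mathbf{1}_{B_{j}}\|_{X}}\r)^{d}\mathbf{1}_{2^{k+1}B_{j}}\r\}^{\frac{1}{d}}\r\|_{X}
\le C\,2^{(k+1)nv/d}\lf\|\lf\{\sum_{j}\lf(\frac{\lambda_{j}}{\|\mathbf{1}_{B_{j}}\|_{X}}\r)^{d}\mathbf{1}_{B_{j}}\r\}^{\frac{1}{d}}\r\|_{X},
\end{align*}
which I would deduce from the elementary envelope $\mathbf{1}_{2^{k+1}B_{j}}\le C\,2^{(k+1)nv}[\mathcal{M}(\mathbf{1}_{B_{j}})]^{v}$ combined with the vector-valued Fefferman--Stein inequality of Assumption \ref{assump1} applied in $X^{v/d}$ at F--S exponent $u:=v$, which requires the convexification parameter $p:=d/v<p_{-}$.

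The principal obstacle is the parameter balance at the very end. Assembling all geometric factors leaves the series $\sum_{k}2^{-k[n+\delta-nv/d]}$, whose convergence forces $v<d(n+\delta)/n$, while the Fefferman--Stein step insists on $v>\max\{1,d/p_{-}\}$. An admissible $v$ exists if and only if $d>\frac{n}{n+\delta}$ and $p_{-}>\frac{n}{n+\delta}$, which are precisely the hypotheses. The $\mathcal{L}_{X,q,0}(\rn)$-version follows by the same scheme with the scalar envelope $\|\mathbf{1}_{2^{k+1}B}\|_{X}\le C\,2^{(k+1)nv}\|\mathbf{1}_{B}\|_{X}$ in place of its vector-valued counterpart, and the same three-movement architecture (oscillation lemma, exchange of sums, maximal-function envelope) will transplant almost verbatim to Theorems \ref{thm-B-Banach-S} and \ref{g-l-bounded4}.
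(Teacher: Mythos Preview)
Your deduction of the norm inequality \eqref{thm-b-1} is essentially the paper's proof: your parameter $v$ is the paper's $d/r$ (compare your envelope $\mathbf{1}_{2^{k+1}B_j}\lesssim 2^{(k+1)nv}[\mathcal{M}(\mathbf{1}_{B_j})]^{v}$ with the derivation of Lemma~\ref{key-rem-B}), and the constraints $v>\max\{1,d/p_-\}$ and $v<d(n+\delta)/n$ translate exactly into the paper's choice $r\in\bigl(\tfrac{n}{n+\delta},\min\{d,p_-\}\bigr)$. One minor misattribution: the scalar $L^q$-boundedness of $g$ is the classical Lemma~\ref{g-Lp}, not a consequence of Assumption~\ref{assump1}.

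There is, however, a genuine gap in your dichotomy argument. The claim that $g(h_2)(x)<\infty$ pointwise on $B$ does not follow from \eqref{g-2}--\eqref{g-3'} together with the Campanato growth of $f$, and is in general false. Take $f=\log|\cdot|\in\mathrm{BMO}(\rn)$ and $B$ centred at the origin; a change of variables shows that $h_2\ast\varphi_t(0)\to\int_{\rn}\log|w|\,\varphi(-w)\,dw$ as $t\to\infty$, and this limit is nonzero for a generic $\varphi$ satisfying Definition~\ref{phi1}, so $\int_r^\infty|h_2\ast\varphi_t(0)|^2\,\tfrac{dt}{t}=\infty$. Your treatment of $h_2$ never invokes the hypothesis $g(f)(x_0)<\infty$, and you cannot recover it via $g(h_2)(x_0)\le g(f)(x_0)+g(h_1)(x_0)$ because $g(h_1)\in L^q$ does not force $g(h_1)(x_0)<\infty$. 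The paper's remedy is to route the dichotomy through Lemma~\ref{lem-g-fz} itself: apply the lemma to balls $B\ni x_0$, bound the right-hand side by exactly your norm-inequality computation (restricted to families of balls through $x_0$), conclude that the oscillation over each such $B$ is finite and hence $g(f)<\infty$ a.e.\ on $B$, and then exhaust $\rn$ by balls through $x_0$. The key point is that Lemma~\ref{lem-g-fz} never bounds the large-$t$ piece $g_\infty(h_2)$ in isolation; it bounds only the \emph{difference} $g_\infty(f)(x)-g_\infty(f)(\widetilde x)$ via \eqref{g-3'}, and that difference carries the extra factor $|x-\widetilde x|/t$ needed for convergence at large~$t$.
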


To show Theorem \ref{thm-B-Banach}, we first establish
five technical lemmas.
The following two lemmas are special
cases, respectively, of \cite[Lemmas 2.20 and 2.21]{jtyyz3}.

\begin{lemma}\label{sum-g}
Let $q\in[1,\infty)$ and $\theta\in(0,1)$.
Then there exists a positive constant $C$
such that, for any $f\in L^1_{\mathrm{loc}}(\rn)$
and any ball $B\subset\rn$,
\begin{align*}
\sum_{k=1}^{\infty}\theta^k\lf[\fint_{2^kB}\lf|f(x)
-f_{B}\r|^q\,dx\r]^{\frac{1}{q}}
\leq C\sum_{k=1}^{\fz}\theta^k\lf[\fint_{2^kB}\lf|f(x)
-f_{2^kB}\r|^q\,dx\r]^{\frac{1}{q}}.
\end{align*}
\end{lemma}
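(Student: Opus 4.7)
The plan is to reduce the inequality to a standard triangle-inequality plus telescoping-plus-Fubini argument, with no appeal to anything beyond elementary means and H\"older's inequality.

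First I would write, for each $k\in\nn$, by the triangle inequality in $L^q(2^kB)$,
\begin{align*}
\lf[\fint_{2^kB}\lf|f(x)-f_B\r|^q\,dx\r]^{\frac{1}{q}}
\leq \lf[\fint_{2^kB}\lf|f(x)-f_{2^kB}\r|^q\,dx\r]^{\frac{1}{q}}
+\lf|f_{2^kB}-f_B\r|,
\end{align*}
so the question is reduced to controlling $|f_{2^kB}-f_B|$ by the sum on the right. For this I would telescope: $f_{2^kB}-f_B=\sum_{j=1}^{k}(f_{2^jB}-f_{2^{j-1}B})$, and estimate each increment by noting that $f_{2^jB}$ is constant on $2^{j-1}B$, hence
\begin{align*}
\lf|f_{2^jB}-f_{2^{j-1}B}\r|
=\lf|\fint_{2^{j-1}B}\lf[f(x)-f_{2^jB}\r]\,dx\r|
\leq 2^n\fint_{2^jB}\lf|f(x)-f_{2^jB}\r|\,dx,
\end{align*}
and then invoke H\"older's inequality to pass from the $L^1$ average to the $L^q$ average, giving
\begin{align*}
\lf|f_{2^jB}-f_{2^{j-1}B}\r|
\leq 2^n\lf[\fint_{2^jB}\lf|f(x)-f_{2^jB}\r|^q\,dx\r]^{\frac{1}{q}}.
\end{align*}

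Summing these three bounds weighted by $\theta^k$ and denoting $a_j:=[\fint_{2^jB}|f(x)-f_{2^jB}|^q\,dx]^{1/q}$, I would obtain
\begin{align*}
\sum_{k=1}^{\infty}\theta^k\lf[\fint_{2^kB}\lf|f(x)-f_{B}\r|^q\,dx\r]^{\frac{1}{q}}
\leq\sum_{k=1}^{\infty}\theta^k a_k+2^n\sum_{k=1}^{\infty}\theta^k\sum_{j=1}^{k}a_j.
\end{align*}
The double sum collapses by swapping the order of summation:
\begin{align*}
\sum_{k=1}^{\infty}\theta^k\sum_{j=1}^{k}a_j
=\sum_{j=1}^{\infty}a_j\sum_{k=j}^{\infty}\theta^k
=\frac{1}{1-\theta}\sum_{j=1}^{\infty}\theta^j a_j,
\end{align*}
so the claim holds with $C:=1+\frac{2^n}{1-\theta}$, which is finite precisely because $\theta\in(0,1)$. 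I do not expect a real obstacle here; the only delicate point is keeping track of the doubling factor $2^n$ introduced when replacing the average over $2^{j-1}B$ by the average over $2^jB$, and ensuring the Fubini step is legitimate (which is immediate since all terms are nonnegative).
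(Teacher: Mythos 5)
Your argument is correct and complete; every step (triangle inequality on the normalized $L^q$ average, telescoping $f_{2^kB}-f_B$, the doubling bound with constant $2^n$, H\"older from $L^1$ to $L^q$ averages, and the Tonelli/Fubini swap using nonnegativity and $\theta\in(0,1)$) checks out, yielding $C=1+\tfrac{2^n}{1-\theta}$. The paper proves this lemma only by citing \cite[Lemmas 2.20 and 2.21]{jtyyz3}, so there is no in-text proof to compare against, but the telescoping-plus-geometric-series mechanism you use is the standard route for exactly this kind of estimate and is almost certainly the same as in the cited source.
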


\begin{lemma}\label{I-JN}
Let $\dz\in(0,\fz)$.
Then there exists a positive constant $C$ such that,
for any $f\in L_{\mathrm{loc}}^{1}(\rn)$ and any
ball $B(y,r)\subset \rn$ with $y\in\rn$ and $r\in(0,\fz)$,
\begin{align*}
\int_{\rn\setminus B(y,r)}
\frac{r^{\dz}|f(z)-f_{B(y,r)}|}{|y-z|^{n+\dz}}\,dz
\leq C\sum_{k=1}^{\fz}\frac{1}{2^{k\dz}}
\fint_{2^{k}B(y,r)}\lf|f(z)-f_{2^{k}B(y,r)}\r|\,dz.
\end{align*}
\end{lemma}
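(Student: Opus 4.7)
My plan uses Lemma \ref{lem-g-fz} as the single-ball input and adds a dichotomy argument together with a Fefferman--Stein transfer across dyadic dilates. Fix a ball $B\subset\rn$ and decompose $f=f_{2B}+f_1+f_2$ with $f_1:=(f-f_{2B})\mathbf{1}_{2B}$ and $f_2:=(f-f_{2B})\mathbf{1}_{\rn\setminus2B}$. Since $\int\varphi=0$, $g(f_{2B})\equiv0$ and hence $g(f)\le g(f_1)+g(f_2)$ by the $L^2(dt/t)$-triangle inequality, while the $L^q$-boundedness of $g$ (for $q\in(1,\fz)$) gives $g(f_1)\in L^q(\rn)$. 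Lemma \ref{lem-g-fz}, established separately in the spirit of \cite{W85} using the size/regularity conditions \eqref{g-2}--\eqref{g-3'} on $\varphi$ together with Lemma \ref{I-JN}, supplies the single-ball bound
\begin{align*}
\lf[\fint_B\lf(g(f)(x)-\inf_{\wz x\in B}g(f)(\wz x)\r)^q\,dx\r]^{1/q}\ls T_B,\quad T_B:=\sum_{k=1}^\fz 2^{-k\dz}\lf[\fint_{2^kB}|f(y)-f_{2^kB}|^q\,dy\r]^{1/q}.
\end{align*}

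The dichotomy follows at once: if $g(f)(x_0)<\fz$ for some $x_0$, then on every ball $B\ni x_0$ both $g(f_1)$ (by the $L^q$-bound) and $g(f_2)$ (from the oscillation control built into Lemma \ref{lem-g-fz}, combined with finiteness at $x_0$) are finite a.e.\ on $B$, hence $g(f)<\fz$ a.e.\ on $B$; varying $B$ yields $g(f)<\fz$ a.e.\ on $\rn$.

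To pass from the single-ball bound to the full $\mathcal{L}^{\mathrm{low}}_{X,q,0,d}(\rn)$-norm, I plug Lemma \ref{lem-g-fz} into the supremum defining $\|g(f)\|_{\mathcal{L}^{\mathrm{low}}_{X,q,0,d}(\rn)}$ and exchange the sums in $j$ and $k$:
\begin{align*}
\sum_{j=1}^m\frac{\lambda_j|B_j|}{\|\mathbf{1}_{B_j}\|_X}\lf[\fint_{B_j}\lf(g(f)-\inf_{B_j}g(f)\r)^q\r]^{1/q}\ls\sum_{k=1}^\fz 2^{-k\dz}\sum_{j=1}^m\frac{\lambda_j|B_j|}{\|\mathbf{1}_{B_j}\|_X}\lf[\fint_{2^kB_j}|f-f_{2^kB_j}|^q\r]^{1/q}.
\end{align*}
For each fixed $k$ I rescale to $\lambda_j':=\lambda_j\cdot2^{-kn}\|\mathbf{1}_{2^kB_j}\|_X/\|\mathbf{1}_{B_j}\|_X$ so that the inner sum matches Definition \ref{2d2} applied to $\{(2^kB_j,\lambda_j')\}$ and is bounded by $\|f\|_{\mathcal{L}_{X,q,0,d}(\rn)}$ times the associated $X$-factor. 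Since $\lambda_j'/\|\mathbf{1}_{2^kB_j}\|_X=2^{-kn}\lambda_j/\|\mathbf{1}_{B_j}\|_X$, this factor equals $2^{-kn}\lf\|\lf\{\sum_j(\lambda_j/\|\mathbf{1}_{B_j}\|_X)^d\mathbf{1}_{2^kB_j}\r\}^{1/d}\r\|_X$. I then invoke the pointwise estimate $\mathbf{1}_{2^kB_j}\ls 2^{kn/p}[\mathcal{M}(\mathbf{1}_{B_j})]^{1/p}$ (valid for any $p>0$) and apply Assumption \ref{assump1} with exponent $u:=d/p>1$ to the $d/p$-th powers, yielding a growth factor $2^{kn(1/p-1)}$. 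Choosing $p\in(n/(n+\dz),\min\{p_-,d\})$ (non-empty since $p_-,d>n/(n+\dz)$) makes $\sum_k 2^{-k\dz+kn(1/p-1)}<\fz$, and the geometric sum in $k$ closes the proof. The $\mathcal{L}_{X,q,0}\to\mathcal{L}^{\mathrm{low}}_{X,q,0}$ assertion is parallel but easier: the transfer collapses to the scalar inequality $\|\mathbf{1}_{2^kB}\|_X/\|\mathbf{1}_B\|_X\ls 2^{kn/p}$ on a single ball.

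The main obstacle is the Fefferman--Stein transfer step: one must carefully check that $u=d/p$ is strictly greater than $1$ while $p$ still lies in $(n/(n+\dz),p_-)$ so that both \eqref{assume-fs} applies and the geometric series in $k$ converges; this compatibility is precisely what forces the hypothesis $d>n/(n+\dz)$ (paired with $p_->n/(n+\dz)$). The analytic core, namely Lemma \ref{lem-g-fz}, has its own separate difficulty --- producing the ``low-envelope'' (inf-based) single-ball bound rather than a mere mean-oscillation bound requires exploiting the non-negativity of $g(f)$ and a Chebyshev-type selection of $\wz x_0\in B$ where $g(f_1)(\wz x_0)$ is comparable to $[\fint_B g(f_1)^q]^{1/q}$, so that the uncontrolled far-field quantity $g(f_2)(x_B)$ effectively cancels between upper and lower bounds on $g(f)$ on $B$.
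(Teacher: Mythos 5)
Your proposal does not prove Lemma \ref{I-JN} at all. Lemma \ref{I-JN} is an elementary pointwise tail estimate for a locally integrable function~$f$: it bounds the weighted integral of the mean oscillation of~$f$ over the complement of a ball by a geometric series of mean oscillations over dyadic dilates. It involves no Littlewood--Paley operator, no ball quasi-Banach function space, no Assumption~\ref{assump1}, and no dichotomy about~$g(f)$. What you have written is a proof sketch for Theorem~\ref{thm-B-Banach} (the $\mathcal{L}_{X,q,0,d}(\rn)$-boundedness of the $g$-function), and in fact you explicitly cite Lemma~\ref{I-JN} itself as an ingredient in establishing Lemma~\ref{lem-g-fz}, which makes the argument circular for the task at hand: you are using the very statement you were asked to establish.

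The correct proof of Lemma~\ref{I-JN} is a short real-variable computation. Write $B:=B(y,r)$ and split $\rn\setminus B=\bigcup_{k\ge0}(2^{k+1}B\setminus 2^k B)$. On the $k$-th annulus one has $|y-z|\ge 2^k r$, so
\begin{align*}
\int_{\rn\setminus B}\frac{r^{\dz}|f(z)-f_{B}|}{|y-z|^{n+\dz}}\,dz
\ls\sum_{k=0}^{\fz}\frac{1}{2^{k\dz}}\fint_{2^{k+1}B}\lf|f(z)-f_{B}\r|\,dz.
\end{align*}
Then split $|f(z)-f_B|\le|f(z)-f_{2^{k+1}B}|+\sum_{j=0}^{k}|f_{2^{j+1}B}-f_{2^jB}|$ and use $|f_{2^{j+1}B}-f_{2^jB}|\le 2^n\fint_{2^{j+1}B}|f-f_{2^{j+1}B}|$; interchanging the $j,k$-sums and summing the geometric factor $\sum_{k\ge j}2^{-k\dz}\sim 2^{-j\dz}$ yields the claimed bound. (This is precisely the telescoping mechanism encoded in Lemma~\ref{sum-g}, and indeed the paper quotes Lemma~\ref{I-JN} as a special case of a known result rather than reproving it.) You should re-target your argument to this statement; the Fefferman--Stein and dichotomy machinery belongs elsewhere.
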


The following lemma on the
boundedness of the Littlewood--Paley $g$-function
on Lebesgue spaces is well known
(see, for instance, \cite[p.\,312, (3.8)]{A.T1986}).

\begin{lemma}\label{g-Lp}
Let $q\in(1,\infty)$ and $g$ be
the Littlewood--Paley $g$-function in \eqref{g-4}.
Then $g$ is bounded on $L^q(\rn)$,
namely, there exists a positive constant $C$ such that,
for any $f\in L^q(\rn)$,
\begin{align*}
\lf\|g(f)\r\|_{L^q(\rn)}\leq C\|f\|_{L^q(\rn)}.
\end{align*}
\end{lemma}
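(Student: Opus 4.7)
The plan is to reduce the lemma to classical Calder\'on--Zygmund theory by packaging the family $\{f\ast\varphi_t\}_{t\in(0,\fz)}$ as a single Hilbert-space-valued convolution. Set $H:=L^2((0,\fz),\frac{dt}{t})$ and define the linear operator
\begin{align*}
Tf(x):=\lf\{f\ast\varphi_t(x)\r\}_{t\in(0,\fz)},
\end{align*}
so that $\|Tf(x)\|_H=g(f)(x)$ for any $x\in\rn$. The inequality $\|g(f)\|_{L^q(\rn)}\ls\|f\|_{L^q(\rn)}$ is then equivalent to the boundedness of $T:L^q(\rn)\to L^q(\rn;H)$, and I would establish it via the vector-valued Calder\'on--Zygmund theorem (Benedek--Calder\'on--Panzone).

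First, I would handle the case $q=2$ by Plancherel and Fubini:
\begin{align*}
\|g(f)\|_{L^2(\rn)}^2=\int_{\rn}|\widehat{f}(\xi)|^2\lf[\int_0^{\fz}|\widehat{\varphi}(t\xi)|^2\,\frac{dt}{t}\r]\,d\xi,
\end{align*}
so it suffices to show $\int_0^{\fz}|\widehat{\varphi}(t\xi)|^2\,\frac{dt}{t}\ls 1$ uniformly in $\xi\in\rn\setminus\{\mathbf 0\}$. The cancellation condition \eqref{g-1} combined with the decay condition \eqref{g-2} gives $|\widehat{\varphi}(\eta)|\ls|\eta|^{\min(1,\dz)}$ near the origin (via $\widehat\varphi(\eta)=\int\varphi(x)[e^{-ix\cdot\eta}-1]\,dx$), whereas integration by parts using \eqref{g-3'} yields $|\widehat{\varphi}(\eta)|\ls|\eta|^{-1}$ for $|\eta|$ large. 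Inserting $\eta=t\xi$ and changing variables shows that the $dt/t$-integral is bounded uniformly, which gives the $L^2$ estimate.

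Next, I would interpret $T$ as convolution against the $H$-valued kernel $K(x):=\{\varphi_t(x)\}_{t\in(0,\fz)}$ and verify the $H$-valued H\"ormander (or pointwise Calder\'on--Zygmund) condition: for any $x,y\in\rn$ with $|x|>2|y|$,
\begin{align*}
\lf\|K(x-y)-K(x)\r\|_H\ls\frac{|y|^\dz}{|x|^{n+\dz}}.
\end{align*}
To see this I would split the $dt/t$-integral at $t=|x|$: on the regime $t\le|x|$, use the gradient bound \eqref{g-3'} to estimate $|\varphi_t(x-y)-\varphi_t(x)|\ls t^{-n-1}|y|(1+|x|/t)^{-(n+1+\dz)}$; on the regime $t>|x|$, use the decay \eqref{g-2} together with cancellation to extract a factor $|y|^\dz t^{-\dz}$ via a standard mean-value/Lipschitz argument with exponent $\dz\wedge 1$. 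Combining the two ranges (with the corresponding square-integrations in $t$) gives the desired $|y|^\dz/|x|^{n+\dz}$ bound. With the $L^2$-boundedness already in hand, the vector-valued Calder\'on--Zygmund theorem extrapolates $T$ to a bounded operator on $L^q(\rn;H)$ for every $q\in(1,\fz)$, which is exactly the stated lemma.

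The main technical obstacle is the verification of the $H$-valued H\"ormander condition from the scalar hypotheses on $\varphi$: the splitting at $t\sim|x|$ together with the precise use of \eqref{g-1}--\eqref{g-3'} must be done carefully so that the square-integrals in $t$ converge at both $0$ and $\fz$ and deliver the correct $|y|^\dz|x|^{-n-\dz}$ decay; the rest of the argument is standard vector-valued CZ theory.
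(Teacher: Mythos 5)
The paper does not prove this lemma; it cites it as well known, referring to Torchinsky's book \cite[p.\,312, (3.8)]{A.T1986}. Your proof via the vector-valued Calder\'on--Zygmund theorem with $H=L^2((0,\infty),dt/t)$ is precisely the standard argument in that reference and in the literature generally: the $L^2$ bound follows from Plancherel and the uniform estimate $\int_0^\infty|\widehat\varphi(s\omega)|^2\,ds/s\lesssim 1$ (using \eqref{g-1}--\eqref{g-2} for decay $\lesssim s^{\min(\delta,1)}$ near $0$, up to a harmless logarithm when $\delta=1$, and one integration by parts with \eqref{g-3'} for decay $\lesssim s^{-1}$ at $\infty$), and the $H$-valued H\"ormander condition follows by splitting the $dt/t$-integral at $t\sim|x|$ exactly as you sketch. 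One small remark: in the regime $t>|x|$ you do not actually need the cancellation \eqref{g-1}; the mean value theorem with the crude bound $\|\nabla\varphi\|_{L^\infty}\lesssim1$ already gives $|\varphi_t(x-y)-\varphi_t(x)|\lesssim|y|\,t^{-n-1}$, and the resulting $|y|/|x|^{n+1}$ is $\le|y|^\delta/|x|^{n+\delta}$ since $|y|<|x|$. Your outline is correct and matches the standard approach.
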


We also need the following lemma, which can be easily obtained
by \eqref{assume-fs} and the fact that,
for any ball $B\in \mathbb{B}(\rn)$, any $\beta\in[1,\fz)$, and any $r\in(0,\fz)$,
$\mathbf{1}_{\beta B}\leq (\beta+1)^{\frac{dn}{r}}
[\mathcal{M}(\mathbf{1}_B)]^{\frac{d}{r}}$
(see \cite[Remark 2.11 (iii)]{cjy-01} for more details).

\begin{lemma}\label{key-rem-B}
Let both $X$ and $p_-$ satisfy Assumption \ref{assump1}, and $d\in(0,\fz)$.
Then, for any $\beta\in[1,\fz)$, any $r\in(0,\min\{d,p_-\})$,
any sequence $\{B_j\}_{j\in\nn}\subset {\mathbb{B}}(\rn)$,
and any $\{\lambda_j\}_{j\in\nn}\subset [0,\fz)$,
\begin{align*}
\lf\|\lf(\sum_{j\in\nn}\lambda_j^d\mathbf{1}_{\beta B_j}\r)^{\frac{1}{d}}\r\|_{X}
\leq(2\beta)^{\frac{n}{r}}\lf\|\lf(\sum_{j\in\nn}
\lambda_j^d\mathbf{1}_{B_j}\r)^{\frac{1}{d}}\r\|_{X}.
\end{align*}
Moreover, for any ball $B\in{\mathbb{B}}(\rn)$ and any $r\in(0,p_-)$,
$\|\mathbf{1}_{\beta B}\|_{X}\leq (2\beta)^{\frac{n}{r}}\|\mathbf{1}_{B}\|_{X}$.
\end{lemma}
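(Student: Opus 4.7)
The plan is to follow the hint in the statement verbatim: establish a pointwise domination of $\mathbf{1}_{\beta B}$ by a fractional power of the Hardy--Littlewood maximal function $\mathcal{M}(\mathbf{1}_B)$, and then invoke the vector-valued Fefferman--Stein inequality from Assumption \ref{assump1} with the Fefferman--Stein parameter $u:=d/r$ on $X^{1/r}$.

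For the pointwise bound, fix any ball $B=B(x_0,s)$ and any $y\in\beta B$. The triangle inequality gives $B\subset B(y,(\beta+1)s)$, so $\mathcal{M}(\mathbf{1}_B)(y)\ge |B|/|B(y,(\beta+1)s)|=(\beta+1)^{-n}$; raising to the positive power $d/r$ and inverting then yields
\begin{equation*}
\mathbf{1}_{\beta B}\le (\beta+1)^{\frac{nd}{r}}\lf[\mathcal{M}(\mathbf{1}_B)\r]^{\frac{d}{r}}\le (2\beta)^{\frac{nd}{r}}\lf[\mathcal{M}(\mathbf{1}_B)\r]^{\frac{d}{r}}\quad\text{on }\rn,
\end{equation*}
since $\beta\ge 1$ implies $\beta+1\le 2\beta$.

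For the main estimate, I would apply the pointwise bound to each $B_j$, multiply by $\lambda_j^d=(\lambda_j^r)^{d/r}$, use the positive-homogeneity of $\mathcal{M}$ to absorb the scalar $\lambda_j^r$ inside the maximal function, sum over $j\in\nn$, and take the $1/d$-th root to obtain
\begin{equation*}
\lf(\sum_{j\in\nn}\lambda_j^d\mathbf{1}_{\beta B_j}\r)^{\frac{1}{d}}
\le (2\beta)^{\frac{n}{r}}\lf(\sum_{j\in\nn}\lf[\mathcal{M}\lf(\lambda_j^r\mathbf{1}_{B_j}\r)\r]^{\frac{d}{r}}\r)^{\frac{1}{d}}.
\end{equation*}
I would then take $\|\cdot\|_X$, use the lattice property of Definition \ref{Debqfs}(ii), and apply the identity $\|h^{1/r}\|_X=\|h\|_{X^{1/r}}^{1/r}$ coming from Definition \ref{Debf}(i) to pass to an $X^{1/r}$-norm of the form appearing in \eqref{assume-fs}. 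Since the hypothesis $r\in(0,\min\{d,p_-\})$ simultaneously yields $p:=r\in(0,p_-)$ and $u:=d/r\in(1,\fz)$, Assumption \ref{assump1} applies to the sequence $\{\lambda_j^r\mathbf{1}_{B_j}\}_{j\in\nn}$ with this choice; substituting $(\lambda_j^r\mathbf{1}_{B_j})^u=\lambda_j^d\mathbf{1}_{B_j}$ and reversing the same identity recovers $\|(\sum_j\lambda_j^d\mathbf{1}_{B_j})^{1/d}\|_X$ on the right, proving the first inequality (with the Fefferman--Stein constant absorbed into the implicit constant on the right-hand side).

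The moreover clause is the single-ball specialization of the same argument: the vector-valued step collapses to the ordinary boundedness of $\mathcal{M}$ on $X^{1/r}$, which is Assumption \ref{assump1} applied with a single function, so the constraint $r<d$ is no longer required and only $r\in(0,p_-)$ is used. The principal bookkeeping obstacle is choreographing the three exponents (the $d/r$ inside the pointwise bound, the $1/d$ outside, and the reindexing through $X^{1/r}$) so that $ur=d$ aligns the Fefferman--Stein estimate on $X^{1/r}$ with the target $X$-norm; once this normalization is arranged, each remaining step is a direct application of a tool already listed in the excerpt.
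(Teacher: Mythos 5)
Your proposal is correct and is exactly the argument the paper sketches just before stating the lemma: the pointwise domination $\mathbf{1}_{\beta B}\le(\beta+1)^{nd/r}[\mathcal{M}(\mathbf{1}_B)]^{d/r}$, followed by Assumption \ref{assump1} applied with $p:=r\in(0,p_-)$ and $u:=d/r\in(1,\infty)$ after passing through the $1/r$-convexification $X^{1/r}$, with the exponent bookkeeping $ur=d$ aligning everything. You also correctly observe that the stated constant $(2\beta)^{n/r}$ should in fact absorb the Fefferman--Stein constant from \eqref{assume-fs}; this is a small imprecision in the paper's phrasing that is harmless for its downstream uses, which all invoke the lemma with $\lesssim$.
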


Via borrowing some ideas from \cite{Sun04,W85}, we have the following lemma
which plays an important role in the proof of Theorem \ref{thm-B-Banach}.

\begin{lemma}\label{lem-g-fz}
Let $q\in(1,\fz)$, $\dz\in(0,1]$,
$g$ be the Littlewood--Paley $g$-function in \eqref{g-4},
and $f\in L^1_{\mathrm{loc}}(\rn)$.
If $g(f)(x_0)<\fz$ for one $x_0\in\rn$, then
there exists a positive constant $C$, independent of both $f$ and $x_0$, such that,
for any ball $B\in\mathbb{B}(\rn)$ containing $x_0$,
\begin{align}\label{lem-g-01}
\lf[\fint_{B}\lf\{g(f)(x)-\inf_{\widetilde{x}\in B}g(f)(\widetilde{x})
\r\}^q\,dx\r]^{\frac{1}{q}}
\leq C\sum_{k=1}^{\infty}\frac{1}{2^{k\delta}}
\lf[\fint_{2^{k}B}\lf|f(z)-f_{2^{k}B}\r|^q\,dz\r]^{\frac{1}{q}}.
\end{align}
\end{lemma}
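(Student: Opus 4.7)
The plan is to decompose $f$ into near and far parts and exploit the mean-zero property \eqref{g-1}, the decay \eqref{g-2}, and the smoothness \eqref{g-3'} of $\varphi$. Specifically, I would write $f = f_1 + f_2 + f_{2B}$, where $f_1 := (f - f_{2B})\mathbf{1}_{2B}$ and $f_2 := (f - f_{2B})\mathbf{1}_{(2B)^c}$; since $\varphi$ has mean zero, the constant $f_{2B}$ is annihilated by $\varphi_t$, so $f * \varphi_t = f_1 * \varphi_t + f_2 * \varphi_t$. Writing $g$ as the $L^2((0,\infty),dt/t)$-norm of $f*\varphi_t$ and applying the triangle inequality there yields, for any $x, y \in B$,
\begin{align*}
|g(f)(x) - g(f)(y)| \le g(f_1)(x) + g(f_1)(y) + G_2(x, y),
\end{align*}
where $G_2(x, y) := \|(f_2 * \varphi_t)(x) - (f_2 * \varphi_t)(y)\|_{L^2(dt/t)}$.

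The main work is the uniform kernel estimate
\begin{align*}
\lf[\int_0^\infty |\varphi_t(x-z) - \varphi_t(y-z)|^2\, \frac{dt}{t}\r]^{1/2} \ls \frac{r_B^\dz}{|x-z|^{n+\dz}}\quad (x, y \in B,\ z \in (2B)^c),
\end{align*}
which I would derive by splitting the $t$-integral at $r_B$ and interpolating the pointwise decay \eqref{g-2} with the Lipschitz bound obtained from \eqref{g-3'} and $|x - y| \le 2 r_B$; each regime reduces to a Beta-function-type integral in $t$. Then Minkowski's integral inequality in $L^2(dt/t)$, combined with $|x - z| \sim |x_B - z|$ for $z \notin 2B$, converts $G_2(x, y)$ into the tail integral $r_B^\dz \int_{\rn \setminus 2B} |f(z) - f_{2B}|/|x_B - z|^{n+\dz}\,dz$, which Lemma \ref{I-JN}, Jensen's inequality, and Lemma \ref{sum-g} transform into the desired series $R := \sum_{k=1}^{\infty} 2^{-k\dz} [\fint_{2^k B} |f(z) - f_{2^k B}|^q\, dz]^{1/q}$, uniformly in $x, y \in B$.

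The local piece is handled by Lemma \ref{g-Lp}: since $f_1 \in L^q(\rn)$ with $\|f_1\|_{L^q(\rn)} \ls |B|^{1/q}[\fint_{2B}|f - f_{2B}|^q]^{1/q}$, one has $[\fint_B g(f_1)(x)^q\,dx]^{1/q} \ls [\fint_{2B}|f - f_{2B}|^q]^{1/q} \ls R$. To pass from the pointwise oscillation bound to the $\inf$-based Campanato-type quantity on the left-hand side of \eqref{lem-g-01}, the hypothesis $g(f)(x_0) < \infty$ guarantees $\inf_B g(f) < \infty$, and by Chebyshev applied to $g(f_1) \in L^q(\rn)$ the set $E := \{y \in B : g(f_1)(y) \le 2(\fint_B g(f_1)^q)^{1/q}\}$ has measure at least $|B|/2$. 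Using the lower semicontinuity of $g(f)$ (inherited from Fatou applied to $g(f)^2 = \int_0^\infty |f*\varphi_t|^2\,dt/t$, each summand being continuous in $x$), I would select a sequence $y_n \in E$ with $g(f)(y_n) \to \inf_B g(f)$; substituting $y = y_n$ into the oscillation bound, taking the $L^q$-norm in $x \in B$, and letting $n\to\infty$ produces \eqref{lem-g-01}.

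The main technical obstacle is the $L^2(dt/t)$ kernel estimate, whose two-regime interpolation ($t \lessgtr r_B$) is the delicate computational core; convergence of the $t$-integral to exactly $|x-z|^{-n-\dz}$ relies on the matching powers from the decay and gradient bounds. A secondary subtlety is the extraction of the comparison sequence $y_n$: Chebyshev alone need not place $y_n$ near the infimum of $g(f)$, but lower semicontinuity of $g(f)$ together with the fact that $E$ has positive measure makes the approximation argument work.
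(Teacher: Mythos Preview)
Your kernel estimate for the far part and the $L^q$-boundedness argument for the near part are both correct and essentially match what the paper does. The gap is in your final step, where you claim that lower semicontinuity of $g(f)$ together with $|E|\ge |B|/2$ lets you choose $y_n\in E$ with $g(f)(y_n)\to\inf_B g(f)$. Lower semicontinuity gives no such conclusion: the sublevel sets $\{g(f)\le \inf_B g(f)+\epsilon\}$ are closed, but they may be null sets lying entirely in $B\setminus E$. Concretely, the infimum of $g(f)$ could be approached only at points $\tilde x$ where $g(f_1)(\tilde x)$ is large (cancellation between $f_1\ast\varphi_t$ and $f_2\ast\varphi_t$ can make $g(f)$ small precisely where $g(f_1)$ is large). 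Your oscillation bound $|g(f)(x)-g(f)(y)|\le g(f_1)(x)+g(f_1)(y)+CR$ then forces the uncontrolled term $\sup_{y}g(f_1)(y)$ when you pass to the infimum, and there is no evident way to avoid it within this decomposition.

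The paper sidesteps this by splitting in $t$ rather than in $f$: writing $g(f)^2=g_r(f)^2+g_\infty(f)^2$ with $g_r$ the integral over $t\in(0,4r)$ and $g_\infty$ over $t\in(4r,\infty)$. The crucial gain is the \emph{exact} pointwise inequality $g(f)\ge g_\infty(f)$, whence $\inf_B g(f)\ge\inf_B g_\infty(f)$; no error term of the type $g(f_1)(\tilde x)$ appears. Since $g_\infty(f)(x_0)\le g(f)(x_0)<\infty$, one has a point where $g_\infty(f)$ is finite, and the gradient bound \eqref{g-3'} (effective because $t>4r$) gives a \emph{uniform pointwise} control of $|g_\infty(f)(x)-g_\infty(f)(\tilde x)|$ by the tail series. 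The small-$t$ piece $g_r(f-f_{8B})$ is then decomposed into near and far parts exactly as you do. Your argument becomes correct if you adopt this $t$-splitting for the passage to the infimum; the kernel estimate you wrote is then only needed on the large-$t$ range, where it coincides with the paper's mean-value computation.
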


\begin{proof}
Let $q$, $\dz$, and $g$ be the same as in the present lemma.
Let $f\in L^1_{\mathrm{loc}}(\rn)$ and assume that there exists one $x_0\in\rn$ such that
$g(f)(x_0)<\fz$. Also, let $B:=B(y,r)$ be any given ball of $\rn$
with $y\in\rn$ and $r\in(0,\fz)$ such that $x_0\in B$. Moreover,
for any $x\in \rn$, let
\begin{align*}
g_{r}(f)(x):=\lf[\int_0^{4r}\lf|f\ast \varphi_t(x)
\r|^2\,\frac{dt}{t}\r]^{\frac{1}{2}}
\end{align*}
and
\begin{align*}
g_{\infty}(f)(x):=\lf[\int_{4r}^{\infty}
\lf|f\ast \varphi_t(x)\r|^2\,\frac{dt}{t}\r]^{\frac{1}{2}}.
\end{align*}
Then, by \eqref{g-1} and $g(f)(x_0)<\fz$
[and hence $g_{\fz}(f)(x_0)<\fz$], we conclude that, for any $x\in B$,
\begin{align*}
0&\leq g(f)(x)-\inf_{\widetilde{x}\in B}g(f)(\widetilde{x})\\
&\leq g_{r}(f)(x)+g_{\infty}(f)(x)
-\inf_{\widetilde{x}\in B}g_\fz(f)(\widetilde{x})\\
&\leq g_{r}(f-f_{8B})(x)+\sup_{\{\wz x\in B:\ g_{\infty}(f)(\wz x)<\fz\}}
\lf|g_{\infty}(f)(x)-g_{\infty}(f)(\wz x)\r|\\
&\leq g_{r}\lf([f-f_{8B}]\mathbf{1}_{8B}\r)(x)
+g_{r}\lf([f-f_{8B}]\mathbf{1}_{\rn\setminus8B}\r)(x)\\
&\quad+\sup_{\{\wz x\in B:\ g_{\infty}(f)(\wz x)<\fz\}}
\lf|g_{\infty}(f)(x)-g_{\infty}(f)(\wz x)\r|
\end{align*}
and hence
\begin{align}\label{G-00}
&\lf[\fint_{B}\lf\{g(f)(x)-\inf_{\widetilde{x}\in B}g(f)(\widetilde{x})
\r\}^q\,dx\r]^{\frac{1}{q}}\\
&\quad\leq\lf[\fint_{B}\lf|g_{r}\lf([f-f_{8B}]
\mathbf{1}_{8B}\r)(x)\r|^q\,dx\r]^{\frac{1}{q}}\noz\\
&\qquad+\lf[\fint_{B}\lf|g_{r}\lf([f-f_{8B}]
\mathbf{1}_{\rn\setminus8B}\r)(x)\r|^q\,dx\r]^{\frac{1}{q}}\noz\\
&\qquad+\lf\{\fint_{B}\lf[\sup_{\{\wz x\in B:\ g_{\infty}(f)(\wz x)<\fz\}}
\lf|g_{\infty}(f)(x)-g_{\infty}(f)(\wz x)\r|\r]^q\,dx\r\}^{\frac{1}{q}}\noz\\
&\quad=:\mathrm{G}_1+\mathrm{G}_2+\mathrm{G}_3.\noz
\end{align}

We first estimate $\mathrm{G_1}$. Indeed, from Lemma \ref{g-Lp}, we deduce that
\begin{align}\label{3.3x}
\mathrm{G_1}
&\leq\lf[\frac{1}{|B|}\int_{\rn}
\lf|g\lf([f-f_{8B}]\mathbf{1}_{8B}\r)(x)\r|^q\,dx\r]^{\frac{1}{q}}
\lesssim\lf[\fint_{8B}
\lf|f(x)-f_{8B}\r|^q\,dx\r]^{\frac{1}{q}}.
\end{align}
This is a desired estimate of $\mathrm{G_1}$.

Next, we consider $\mathrm{G_2}$.
Notice that, for any $z\in \rn\setminus 8B$ and $x\in B$,
$$|x-z|\sim|y-z|.$$
Using this, \eqref{g-2}, and Lemma \ref{I-JN}, we conclude that, for any $x\in B$,
\begin{align*}
&g_{r}\lf(\lf[f-f_{8B}\r]\mathbf{1}_{\rn\setminus8B}\r)(x)\\
&\quad\leq\lf\{\int_0^{4r}
\lf[\int_{\rn\setminus8B}\lf|f(z)-f_{8B}
\r|\lf|\frac{1}{t^n}\varphi\lf(\frac{x-z}{t}\r)\r|\,dz
\r]^2\,\frac{dt}{t}\r\}^{\frac{1}{2}}\\
&\quad\lesssim\lf\{\int_{0}^{4r}\lf[\int_{\rn\setminus8B}
\frac{|f(z)-f_{8B}|}{(t+|x-z|)^{n+\delta}}\,dz
\r]^2 t^{2\delta-1}\,dt\r\}^{\frac{1}{2}}\\
&\quad\lesssim\int_{\rn\setminus8B}
\frac{r^\delta|f(z)-f_{8B}|}{|y-z|^{n+\delta}}\,dz
\lesssim\sum_{k=1}^{\infty}\frac{1}{2^{k\delta}}
\fint_{2^{k}B}\lf|f(z)-f_{2^{k}B}\r|\,dz
\end{align*}
and hence
\begin{align}\label{lem-g-02}
\mathrm{G_2}\ls \sum_{k=1}^{\infty}\frac{1}{2^{k\delta}}
\fint_{2^{k}B}\lf|f(z)-f_{2^{k}B}\r|\,dz.	
\end{align}
This is a desired estimate of $\mathrm{G_2}$.

Now, we estimate $\mathrm{G_3}$. To this end,
we first consider $|g_{\infty}(f)(x)-g_{\infty}(f)(\wz x)|$ for any $x\in B$
and any $\wz x\in B$ such that $g_{\fz}(f)(\wz x)<\fz$.
Indeed, from the fact that $g(f)\geq 0$ everywhere on $\rn$, we deduce that,
for any $x\in B$ and any $\wz x\in B$ such that $g_{\fz}(f)(\wz x)<\fz$,
\begin{align}\label{e3-0}
\lf|g_{\infty}(f)(x)-g_{\infty}(f)(\wz x)\r|
&=\lf|\lf[\int_{4r}^{\infty}\lf|f\ast \varphi_t(x)
\r|^2\,\frac{dt}{t}\r]^{\frac{1}{2}}
-\lf[\int_{4r}^{\infty}\lf|f\ast \varphi_t(\wz x)
\r|^2\,\frac{dt}{t}\r]^{\frac{1}{2}}\r|\\
&\leq\lf[\int_{4r}^{\infty}
\lf|f\ast\varphi_t(x)-f\ast\varphi_t(\wz x)
\r|^2\,\frac{dt}{t}\r]^{\frac{1}{2}}.\noz
\end{align}
Noticing that, for any $z\in\rn$
and $t\in(0,\infty)$,
$|\varphi_t(z)|\lesssim t^{-n}$,
by this, \eqref{g-1}, \eqref{g-2}, \eqref{g-3'},
the mean value theorem, and the fact that, for any
$\theta \in(0,1)$, $z\in \rn\setminus 8B$, and $x,\wz x\in B$,
$$|x-z+\theta(x-\wz x)|\sim |y-z|,$$
we find that, for any $t\in (4r,\fz)$, $x\in B$,
and $\wz x\in B$ such that $g_{\fz}(f)(\wz x)<\fz$,
there exists a $\theta\in(0,1)$ such that
\begin{align*}
&\lf|f\ast\varphi_t(x)-f\ast\varphi_t(\wz x)\r|\\
&\quad=\lf|(f-f_{8B})\ast\varphi_t(x)
-(f-f_{8B})\ast\varphi_t(\wz x)\r|\\
&\quad\leq\lf|\lf([f-f_{8B}]
\mathbf{1}_{\rn\setminus8B}\r)\ast\varphi_t(x)
-\lf([f-f_{8B}]\mathbf{1}_{\rn\setminus8B}\r)
\ast\varphi_t(\wz x)\r|\\
&\quad\quad+\lf|\lf([f-f_{8B}]
\mathbf{1}_{8B}\r)\ast\varphi_t(x)\r|
+\lf|\lf([f-f_{8B}]\mathbf{1}_{8B}\r)
\ast\varphi_t(\wz x)\r|\\
&\quad\leq\int_{\rn\setminus8B}\lf|f(z)-f_{8B}\r|
\lf|\varphi\lf(\frac{x-z}{t}\r)
-\varphi\lf(\frac{\wz x-z}{t}\r)\r|\,\frac{dz}{t^n}\\
&\quad\quad+\int_{8B}\lf|f(z)-f_{8B}\r|
\lf|\varphi\lf(\frac{x-z}{t}\r)\r|\,\frac{dz}{t^n}
+\int_{8B}\lf|f(z)-f_{8B}\r|
\lf|\varphi\lf(\frac{\wz x-z}{t}\r)\r|\,\frac{dz}{t^n}\\
&\quad\ls\int_{\rn\setminus8B}
\frac{t^\delta|f(z)-f_{8B}||x-\wz x|}
{(t+|x-z+\theta(x-\wz x)|)^{n+1+\delta}}\,dz
+\int_{8B}\frac{|f(z)-f_{8B}|}{t^n}\,dz\\
&\quad\sim\int_{\rn\setminus8B}\lf|f(z)-f_{8B}\r|
\frac{t^\delta|x-\wz x|}{(t+|y-z|)^{n+1+\delta}}\,dz
+\int_{8B}\frac{|f(z)-f_{8B}|}{t^n}\,dz.
\end{align*}
By this, \eqref{e3-0}, the Minkowski inequality,
and Lemma \ref{I-JN}, we conclude that, for any $x\in B$
and $\wz x\in B$ such that $g_{\fz}(f)(\wz x)<\fz$,
\begin{align}\label{g10}
&\lf|g_{\infty}(f)(x)-g_{\infty}(f)(\wz x)\r|\\
&\quad\lesssim\lf\{\int_{4r}^{\infty}\lf[\int_{\rn\setminus8B}
\lf|f(z)-f_{8B}\r|\frac{t^\delta|x-\wz x|}
{(t+|y-z|)^{n+1+\delta}}\,dz\r]^2\,\frac{dt}{t}\r\}^{\frac{1}{2}}\nonumber\\
&\quad\quad+\lf\{\int_{4r}^{\infty}
\lf[\int_{8B}\frac{|f(z)-f_{8B}|}{t^n}\,dz\r]^2
\,\frac{dt}{t}\r\}^{\frac{1}{2}}\nonumber\\
&\quad\lesssim\int_{\rn\setminus8B}r
\lf|f(z)-f_{8B}\r|\lf[\int_{4r}^{\infty}
\frac{t^{2\dz-1}}{(t+|y-z|)^{2n+2+2\delta}}\,dt\r]^{\frac{1}{2}}\,dz\nonumber\\
&\quad\quad+\int_{8B}\lf|f(z)-f_{8B}\r|
\lf(\int_{4r}^{\infty}\frac{1}{t^{2n+1}}\,dt\r)^\frac{1}{2}\,dz\nonumber\\
&\quad\sim\int_{\rn\setminus8B}r\lf|f(z)-f_{8B}\r|\lf[
\lf(\int_{4r}^{|y-z|}+\int_{|y-z|}^{\infty}\r)
\frac{t^{2\dz-1}}{(t+|y-z|)^{2n+2+2\delta}}\,dt\r]^\frac{1}{2}\,dz\nonumber\\
&\quad\quad+\frac{1}{r^n}\int_{8B}
\lf|f(z)-f_{8B}\r|\,dz\nonumber\\
&\quad\sim\int_{\rn\setminus8B}
\frac{r|f(z)-f_{8B}|}{|y-z|^{n+1}}\,dz
+\fint_{8B}\lf|f(z)-f_{8B}\r|\,dz\nonumber\\
&\quad\lesssim\sum_{k=1}^{\infty}\frac{1}{2^{k}}
\fint_{2^{k+1}B}\lf|f(z)-f_{2^{k+1}B}\r|\,dz,\noz
\end{align}
which, combined with \eqref{G-00}, \eqref{3.3x}, \eqref{lem-g-02}, $\dz\in(0,1]$,
and the H\"older inequality, further implies that
\eqref{lem-g-01} holds true. This finishes the proof of Lemma \ref{lem-g-fz}.
\end{proof}

Next, we show Theorem \ref{thm-B-Banach}

\begin{proof}[Proof of Theorem \ref{thm-B-Banach}]
Let $\dz$, $g$, $X$, $p_-$, $q$, and $d$ be the same as in the present theorem.
We only show the case $f\in \mathcal{L}_{X,q,0,d}(\rn)$
because the proof of $f\in \mathcal{L}_{X,q,0}(\rn)$ is similar.
We first claim that, if $g(f)(x_0)<\fz$ for one $x_0\in \rn$, then
$g(f)$ is finite almost everywhere. To this end,
let $m\in\nn$, $\{B_j\}_{j=1}^m\subset {\mathbb{B}}(\rn)$
satisfy that $x_0\in B_j$ for any $j\in\nn$,
and $\{\lambda_j\}_{j=1}^m\subset[0,\fz)$ that $\sum_{j\in\nn}\lambda_j\neq 0$.
Moreover, for any $i\in\{1,\ldots,m\}$ and $k\in\nn$, let
$\lambda_{i,k}:=\frac{\lambda_i\|\mathbf{1}_{2^kB_i}\|_{X}}{\|\mathbf{1}_{B_i}\|_{X}}$.
Then, from Lemma \ref{key-rem-B}, we deduce that, for
any given $r\in(0,\min\{d,p_-\})$ and any $k\in\nn$,
\begin{align}\label{g-thm-B-02}
\lf\|\lf\{\sum_{i=1}^m\lf(\frac{\lambda_{i,k}}{\|\mathbf{1}_{2^kB_i}\|_{X}}\r)^d
\mathbf{1}_{2^kB_i}\r\}^{\frac{1}{d}}\r\|_{X}
&\ls2^{\frac{kn}{r}}\lf\|\lf\{\sum_{i=1}^m
\lf(\frac{\lambda_{i,k}}{\|\mathbf{1}_{2^kB_i}\|_{X}}\r)^d
\mathbf{1}_{B_i}\r\}^{\frac{1}{d}}\r\|_{X}\\
&\sim2^{\frac{kn}{r}}\lf\|\lf\{\sum_{i=1}^m
\lf(\frac{\lambda_{i}}{\|\mathbf{1}_{B_i}\|_{X}}\r)^d
\mathbf{1}_{B_i}\r\}^{\frac{1}{d}}\r\|_{X}.\noz
\end{align}
By Lemma \ref{lem-g-fz}, \eqref{g-thm-B-02},
the Fubini theorem, the definition of
$\|\cdot\|_{\mathcal{L}_{X,q,0,d}(\rn)}$, and
$\min\{d,p_-\}\in(\frac{n}{n+\dz},\fz)$, we conclude that,
for any given $r\in(\frac{n}{n+\dz},\min\{d,p_-\})$,
\begin{align}\label{thm-B-07}
&\lf\|\lf\{\sum_{i=1}^m\lf(\frac{\lambda_{i}}{\|\mathbf{1}_{B_i}\|_{X}}\r)^d
\mathbf{1}_{B_i}\r\}^{\frac{1}{d}}\r\|_{X}^{-1}
\sum_{j=1}^m\frac{\lambda_j|B_j|}{\|\mathbf{1}_{B_j}\|_{X}}
\lf[\fint_{B_j}\lf\{g(f)(x)-\inf_{\wz x\in B_j}g(f)(\wz x)\r\}^q\,dx\r]^{\frac{1}{q}}\\
&\quad\ls\lf\|\lf\{\sum_{i=1}^m\lf(\frac{\lambda_{i}}{\|\mathbf{1}_{B_i}\|_{X}}\r)^d
\mathbf{1}_{B_i}\r\}^{\frac{1}{d}}\r\|_{X}^{-1}
\sum_{j=1}^m\frac{\lambda_j|B_j|}{\|\mathbf{1}_{B_j}\|_{X}}
\sum_{k\in\nn}\lf[\frac{1}{2^{k\dz}}\fint_{2^kB_j}
\lf|f(x)-f_{2^kB_j}\r|^q\,dx\r]^{\frac{1}{q}}\noz\\
&\quad\ls\sum_{k\in\nn}2^{k(-n-\dz+\frac{n}{r})}\lf\|\lf\{\sum_{i=1}^m
\lf(\frac{\lambda_{i,k}}{\|\mathbf{1}_{2^kB_i}\|_{X}}\r)^d
\mathbf{1}_{2^kB_i}\r\}^{\frac{1}{d}}\r\|_{X}^{-1}\noz\\
&\qquad\times\sum_{j=1}^m\frac{\lambda_{j,k}|2^kB_j|}
{\|\mathbf{1}_{2^kB_j}\|_{X}}\lf[\frac{1}{2^{k\dz}}\fint_{2^kB_j}
\lf|f(x)-f_{2^kB_j}\r|^q\,dx\r]^{\frac{1}{q}}\noz\\
&\quad\ls\sum_{k\in\nn}2^{k(-n-\dz+\frac{n}{r})}
\|f\|_{\mathcal{L}_{X,q,0,d}(\rn)}
\ls\|f\|_{\mathcal{L}_{X,q,0,d}(\rn)}<\fz.\noz
\end{align}
From this, we deduce that, for any $j\in\nn$,
$$\lf[\fint_{B_j}\lf\{g(f)(x)-\inf_{\wz x\in B_j}g(f)(\wz x)\r\}^q\,dx\r]^{\frac{1}{q}}<\fz$$
and hence $g(f)(x)<\fz$ for almost every $x\in B_j$, which, together with
the arbitrariness of $B_j$, further implies that, for almost every $x\in\rn$, $g(f)(x)<\fz$.
Thus, the above claim holds true.

Now, we show that \eqref{thm-b-1} holds true
if $g(f)(x_0)<\fz$ for one $x_0\in \rn$.
Indeed, by the above claim and an argument similar to the estimation of \eqref{thm-B-07},
we conclude that, for any $m\in\nn$, any balls $\{B_j\}_{j=1}^m\subset {\mathbb{B}}(\rn)$,
and any $\{\lambda_j\}_{j=1}^m\subset[0,\fz)$ with $\sum_{j=1}^m\lambda_j\neq 0$,
\begin{align*}
&\lf\|\lf\{\sum_{i=1}^m\lf(\frac{\lambda_{i}}{\|\mathbf{1}_{B_i}\|_{X}}\r)^d
\mathbf{1}_{B_i}\r\}^{\frac{1}{d}}\r\|_{X}^{-1}
\sum_{j=1}^m\frac{\lambda_j|B_j|}{\|\mathbf{1}_{B_j}\|_{X}}
\lf[\fint_{B_j}\lf\{f(x)-\inf_{\wz x\in B_j}g(f)(\wz x)\r\}^q\,dx\r]^{\frac{1}{q}}\\
&\quad\ls\|f\|_{\mathcal{L}_{X,q,0,d}(\rn)},
\end{align*}
which, combined with the definition of
$\|\cdot\|_{\mathcal{L}^{\mathrm{low}}_{X,q,0,d}(\rn)}$,
further implies that \eqref{thm-b-1} holds true.
This finishes the proof of Theorem \ref{thm-B-Banach}.
\end{proof}

\begin{remark}\label{g-bounded-remark-B}
\begin{enumerate}
\item[\rm (i)]
Let $p\in(\frac{n}{n+1},\fz)$, $X:=L^p(\rn)$, $\dz=1$, and $q\in(1,\fz)$. In this case,
$\mathcal{L}_{X,q,0}(\rn)=\mathcal{C}_{\frac{1}{p}-1,q,0}(\rn)$ and hence
the corresponding result on $\mathcal{L}_{X,q,0}(\rn)$ of
Theorem \ref{thm-B-Banach} coincides with \cite[Theorem 1]{Sun04}.
Moreover, when $p=1$, we have $\mathcal{L}_{X,q,0}(\rn)=\mathrm{BMO\,}(\rn)$
and hence the corresponding result on $\mathcal{L}_{X,q,0}(\rn)$ of Theorem \ref{thm-B-Banach}
in this case coincides with \cite[Corollary 1.1]{MY}.

\item[\rm (ii)]
Let $s\in\nn$. In this case, it is still unclear
whether or not the Littlewood--Paley $g$-function
is bounded on either $\mathcal{L}_{X,q,s}(\rn)$ or $\mathcal{L}_{X,q,s,d}(\rn)$.
\end{enumerate}
\end{remark}

\section{Lusin-Area Functions on Both
$\mathcal{L}_{X,q,0}(\rn)$ and $\mathcal{L}_{X,q,0,d}(\rn)$\label{Lusin}}

In this section, we study the boundedness of the
Lusin-area function on both $\mathcal{L}_{X,q,0}(\rn)$ and $\mathcal{L}_{X,q,0,d}(\rn)$.
To be precise, we prove the following theorem.

\begin{theorem}\label{thm-B-Banach-S}
Let $\dz\in(0,1]$ and $S$ be the Lusin-area function in \eqref{S}.
Let both $X$ and $p_-\in(\frac{n}{n+\dz},\fz)$ satisfy Assumption \ref{assump1},
$q\in(1,\fz)$, and $d\in(\frac{n}{n+\dz},\fz)$.
Then, for any $f\in \mathcal{L}_{X,q,0,d}(\rn)$
[resp., $f\in \mathcal{L}_{X,q,0}(\rn)$],
$S(f)$ is either infinite everywhere or finite almost everywhere and, in the latter case,
there exists a positive constant $C$, independent of $f$, such that
\begin{align*}
\|S(f)\|_{\mathcal{L}^{\mathrm{low}}_{X,q,0,d}(\rn)}
\leq C\|f\|_{\mathcal{L}_{X,q,0,d}(\rn)}
\end{align*}
$$
\lf[\mathrm{resp}.,\ \|S(f)\|_{\mathcal{L}^{\mathrm{low}}_{X,q,0}(\rn)}
\leq C\|f\|_{\mathcal{L}_{X,q,0}(\rn)}\r].
$$
\end{theorem}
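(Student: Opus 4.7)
The plan is to mirror the proof of Theorem \ref{thm-B-Banach}, replacing the pointwise oscillation lemma for the $g$-function (Lemma \ref{lem-g-fz}) by an analogous one for $S$. Specifically, the central step is to show that if $f\in L^1_{\mathrm{loc}}(\rn)$ and $S(f)(x_0)<\fz$ for some $x_0\in\rn$, then for every ball $B\in\mathbb{B}(\rn)$ with $x_0\in B$,
\begin{align*}
\lf[\fint_{B}\lf\{S(f)(x)-\inf_{\widetilde{x}\in B}S(f)(\widetilde{x})\r\}^q\,dx\r]^{\frac{1}{q}}
\ls \sum_{k=1}^{\fz}\frac{1}{2^{k\dz}}\lf[\fint_{2^{k}B}\lf|f(z)-f_{2^{k}B}\r|^q\,dz\r]^{\frac{1}{q}}.
\end{align*}

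For $B=B(y,r)$, I would split $S(f)=S_r(f)+S_\fz(f)$ according to whether the aperture parameter satisfies $t\leq 4r$ or $t>4r$, and decompose $f$ via \eqref{g-1} as $f=(f-f_{8B})\mathbf{1}_{8B}+(f-f_{8B})\mathbf{1}_{\rn\setminus 8B}$. The $S_r$-contribution of the near piece $(f-f_{8B})\mathbf{1}_{8B}$ is controlled by the $L^q(\rn)$-boundedness of $S$ (standard, analogous to Lemma \ref{g-Lp}). For the $S_r$-contribution of the far piece, one observes that, for $(\xi,t)\in\Gamma(x)$ with $x\in B$ and $t\leq 4r$, one has $|\xi-z|\sim |y-z|$ whenever $z\notin 8B$; combining this with the decay \eqref{g-2} and Lemma \ref{I-JN} yields the required bound, just as in the estimate for $\mathrm{G}_2$ inside the proof of Lemma \ref{lem-g-fz}.

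The main obstacle is controlling $|S_\fz(f)(x)-S_\fz(f)(\widetilde{x})|$ uniformly for $x,\widetilde{x}\in B$. Here the key device is a change of variables $\xi\mapsto \eta+x$ (resp.\ $\xi\mapsto\eta+\widetilde{x}$) that sends both $\Gamma(x)\cap\{t>4r\}$ and $\Gamma(\widetilde{x})\cap\{t>4r\}$ onto the common region $\{(\eta,t):|\eta|<t,\ t>4r\}$; after this, the reverse triangle inequality in $L^2(\frac{d\eta\,dt}{t^{n+1}})$ reduces the task to estimating the $L^2$-norm of $f\ast\varphi_t(\eta+x)-f\ast\varphi_t(\eta+\widetilde{x})$ over that region. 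Splitting $f$ as above, invoking the smoothness estimate \eqref{g-3'} on $\rn\setminus 8B$ through the mean value theorem (exactly as in the derivation of \eqref{g10}) and the decay \eqref{g-2} on $8B$, and then integrating in $t\in(4r,\fz)$ produces the same chain of bounds as in the $g$-function case; Lemma \ref{I-JN} (and, if necessary, Lemma \ref{sum-g}) then finishes the proof of the lemma.

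With this lemma in hand, the remainder of the proof of Theorem \ref{thm-B-Banach-S} duplicates that of Theorem \ref{thm-B-Banach} essentially verbatim: apply Lemma \ref{key-rem-B} to exchange each $B_j$ for its dilate $2^kB_j$ inside the outer quasi-norm, use the definition of $\|\cdot\|_{\mathcal{L}_{X,q,0,d}(\rn)}$ to absorb the resulting mean oscillations, and sum the geometric series in $k$, which converges because both $d$ and $p_-$ exceed $\frac{n}{n+\dz}$. Finiteness almost everywhere of $S(f)$ under the single-point hypothesis $S(f)(x_0)<\fz$ is then immediate. The corresponding statement for $\mathcal{L}_{X,q,0}(\rn)$ follows by the same argument applied to a single ball, without the outer summation.
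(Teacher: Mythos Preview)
Your proposal is correct and follows essentially the same route as the paper: the paper states and proves a lemma (Lemma \ref{lem-s-fz}) giving exactly the oscillation inequality you describe, via the same $S_r/S_\infty$ splitting, the same near/far decomposition with the $L^q$-boundedness of $S$ and Lemma \ref{I-JN}, and the same change of variables $\xi\mapsto x+\xi$ reducing the $S_\infty$-difference to the estimate already carried out in \eqref{g10}; the theorem is then deduced by repeating the proof of Theorem \ref{thm-B-Banach} verbatim with Lemma \ref{lem-g-fz} replaced by Lemma \ref{lem-s-fz}.
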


To show Theorem \ref{thm-B-Banach-S}, we establish two technique lemmas first.
We begin with the boundedness of the Lusin-area function on Lebesgue spaces
(see, for instance, \cite[p.\,46]{EMS2}).

\begin{lemma}\label{S-Lp}
Let $q\in(1,\infty)$ and $S$ be the Lusin-area function in \eqref{S}.
Then $S$ is bounded on $L^q(\rn)$, namely,
there exists a positive constant $C$ such that,
for any $f\in L^q(\rn)$,
\begin{align*}
\|S(f)\|_{L^q(\rn)}\leq C\|f\|_{L^q(\rn)}.
\end{align*}
\end{lemma}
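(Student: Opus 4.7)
The plan is to prove Lemma \ref{S-Lp} in two stages: first establish the $L^2$ bound by reducing the Lusin-area function to the Littlewood--Paley $g$-function via Fubini's theorem, and then extend to every $q\in(1,\fz)$ by the vector-valued Calder\'on--Zygmund theory.

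For the $L^2$ estimate, I would swap the order of integration using Fubini's theorem and exploit the fact that, for each fixed $(\xi,t)\in\rr^{n+1}_+$, the set $\{x\in\rn:\ (\xi,t)\in\Gamma(x)\}$ is exactly the ball $B(\xi,t)$, which has measure $c_n t^n$. This yields the identity
$$\|S(f)\|_{L^2(\rn)}^2=c_n\int_0^\fz\int_\rn|f\ast\varphi_t(\xi)|^2\,d\xi\,\frac{dt}{t}=c_n\|g(f)\|_{L^2(\rn)}^2,$$
so that Lemma \ref{g-Lp} with $q=2$ delivers $\|S(f)\|_{L^2(\rn)}\ls\|f\|_{L^2(\rn)}$.

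For general $q\in(1,\fz)$, I would view $f\mapsto Tf$, with $Tf(x)(\xi,t):=(f\ast\varphi_t)(\xi)\mathbf{1}_{|\xi-x|<t}$, as a linear operator from $L^q(\rn)$ into $L^q(\rn;\mathcal{H})$ for the Hilbert space $\mathcal{H}:=L^2(\rr^{n+1}_+,d\xi\,dt/t^{n+1})$, so that $\|Tf(x)\|_{\mathcal{H}}=S(f)(x)$. The associated $\mathcal{H}$-valued kernel $K(x,y)(\xi,t):=\varphi_t(\xi-y)\mathbf{1}_{|\xi-x|<t}$ has already passed the $L^2\to L^2$ test; thus it suffices to verify the H\"ormander-type regularity condition
$$\int_{|x-y|>2|y-y'|}\lf\|K(x,y)-K(x,y')\r\|_{\mathcal{H}}\,dx\leq C$$
uniformly in $y,y'\in\rn$. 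The difference $K(x,y)-K(x,y')=[\varphi_t(\xi-y)-\varphi_t(\xi-y')]\mathbf{1}_{|\xi-x|<t}$ is controlled by the mean value theorem together with the gradient bound (\ref{g-3'}); after integrating in $(\xi,t)$ with the cone restriction $|\xi-x|<t$ and the elementary observation that $t+|\xi-y|\sim t+|x-y|$ on this cone, one obtains $\|K(x,y)-K(x,y')\|_{\mathcal{H}}\ls|y-y'|/|x-y|^{n+1}$, and integration in $x$ on $\{|x-y|>2|y-y'|\}$ then gives a uniform bound. The vector-valued Calder\'on--Zygmund theorem upgrades the $L^2$ bound to $L^q$ for every $q\in(1,\fz)$.

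The principal obstacle is the $x$-dependence of the cone indicator $\mathbf{1}_{|\xi-x|<t}$, which destroys the pure convolution structure and forces the kernel estimate to be carried out on the two-variable kernel $K(x,y)$ rather than on a convolution kernel. A shorter alternative I would also pursue is the pointwise inequality $S(f)(x)\leq 2^{\lambda n/2}g_\lambda^*(f)(x)$, valid for every $\lambda\in(1,\fz)$ because $(\xi,t)\in\Gamma(x)$ forces $t/(t+|x-\xi|)\geq 1/2$; this reduces Lemma \ref{S-Lp} to the classical $L^q$ boundedness of $g_\lambda^*$ and sidesteps the H\"ormander verification entirely.
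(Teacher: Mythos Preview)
The paper does not prove Lemma~\ref{S-Lp}; it simply records it as well known and cites \cite[p.\,46]{EMS2}. Your proposal therefore supplies more than the paper does, and the overall strategy (Fubini for $q=2$, then vector-valued Calder\'on--Zygmund theory) is the standard one.

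There is, however, a gap in your main argument. With the kernel $K(x,y)(\xi,t)=\varphi_t(\xi-y)\mathbf{1}_{|\xi-x|<t}$ you verify only the H\"ormander condition in the $y$-variable; combined with the $L^2$ bound this yields weak type $(1,1)$ and hence $L^q$ for $q\in(1,2]$, but not for $q\in(2,\fz)$. The upper range requires the adjoint estimate, i.e., regularity of $K$ in $x$, which you do not address and which the discontinuous indicator makes awkward. The clean fix is precisely the change of variables you dismissed as unavailable: setting $\eta:=\xi-x$ gives $\widetilde Tf(x)(\eta,t)=(f\ast\varphi_t)(x+\eta)\mathbf{1}_{|\eta|<t}$ with \emph{convolution} kernel $\widetilde K(z)(\eta,t)=\varphi_t(z+\eta)\mathbf{1}_{|\eta|<t}$, so that $x$- and $y$-regularity coincide and your gradient computation then delivers the full range $(1,\fz)$. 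Your remark that the cone indicator ``destroys the pure convolution structure'' is therefore mistaken; a translation in the Hilbert-space variable restores it.

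Your alternative via $S(f)\le 2^{\lambda n/2}g_\lambda^*(f)$ is correct as a pointwise inequality, but be careful: the standard proof of Lemma~\ref{Lam-Lp} (including the one in \cite{EMS2} that the paper cites) decomposes $g_\lambda^*$ into Lusin-area functions of growing aperture and then invokes Lemma~\ref{S-Lp}, so this shortcut is circular unless you supply an independent proof of the $g_\lambda^*$ bound.
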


We also need the following technical lemma.

\begin{lemma}\label{lem-s-fz}
Let $q\in(1,\fz)$, $\dz\in(0,1]$,
$S$ be the Lusin-area function in \eqref{g-4},
and $f\in L^1_{\mathrm{loc}}(\rn)$.
If $S(f)(x_0)<\fz$ for one $x_0\in\rn$, then
there exists a positive constant $C$, independent of both $f$ and $x_0$, such that,
for any ball $B\in\mathbb{B}(\rn)$ containing $x_0$,
\begin{align}\label{lem-s-01}
\lf[\fint_{B}\lf\{S(f)(x)-\inf_{\widetilde{x}\in B}S(f)(\widetilde{x})
\r\}^q\,dx\r]^{\frac{1}{q}}
\leq C\sum_{k=1}^{\infty}\frac{1}{2^{k\delta}}
\lf[\fint_{2^{k}B}\lf|f(z)-f_{2^{k}B}\r|^q\,dz\r]^{\frac{1}{q}}.
\end{align}
\end{lemma}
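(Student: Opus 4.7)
The plan is to adapt the proof of Lemma~\ref{lem-g-fz}, splitting $S$ in the $t$-variable rather than $g$. Given the ball $B=B(y,r)\in\mathbb B(\rn)$ with $x_0\in B$, I would define
\begin{align*}
S_r(f)(x):=\lf[\int_0^{4r}\int_{|\xi-x|<t}|f\ast\varphi_t(\xi)|^2\,\frac{d\xi\,dt}{t^{n+1}}\r]^{\frac12}
\end{align*}
and let $S_\fz(f)$ denote the analogous integral over $t\in(4r,\fz)$, so that $S(f)^2=S_r(f)^2+S_\fz(f)^2$ and hence $S(f)\le S_r(f)+S_\fz(f)$ pointwise. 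Since $S_\fz(f)(x_0)\le S(f)(x_0)<\fz$ and $S(f)\ge S_\fz(f)$ everywhere, I can argue as in Lemma~\ref{lem-g-fz} that, for every $x\in B$,
\begin{align*}
S(f)(x)-\inf_{\wz x\in B}S(f)(\wz x)\le S_r(f)(x)+\sup_{\{\wz x\in B:\ S_\fz(f)(\wz x)<\fz\}}|S_\fz(f)(x)-S_\fz(f)(\wz x)|.
\end{align*}
Using \eqref{g-1} to replace $f$ by $f-f_{8B}$ inside $S_r$, splitting $f-f_{8B}=(f-f_{8B})\mathbf{1}_{8B}+(f-f_{8B})\mathbf{1}_{\rn\setminus8B}$, and taking the $L^q(B,\frac{dx}{|B|})$ norm, the problem reduces to three pieces, exactly paralleling $\mathrm{G}_1,\mathrm{G}_2,\mathrm{G}_3$ of Lemma~\ref{lem-g-fz}.

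The local piece from $S_r((f-f_{8B})\mathbf{1}_{8B})$ is controlled by Lemma~\ref{S-Lp} applied to the full $S$, giving a bound by $[\fint_{8B}|f-f_{8B}|^q\,dx]^{1/q}$. For the nonlocal piece $S_r((f-f_{8B})\mathbf{1}_{\rn\setminus8B})(x)$ with $x\in B$, I would use that any $(\xi,t)\in\Gamma(x)$ with $t<4r$ satisfies $\xi\in B(y,5r)$, so $|\xi-z|\sim|y-z|$ whenever $z\notin 8B$; plugging in \eqref{g-2}, integrating out $\eta:=\xi-x\in B(0,t)$ to gain a factor $t^n$, and evaluating the $t$-integral yield the bound $r^\dz\int_{\rn\setminus8B}|f(z)-f_{8B}|/|y-z|^{n+\dz}\,dz$. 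Lemma~\ref{I-JN} then converts this into the tail sum on the right-hand side of \eqref{lem-s-01}.

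The main new work is the difference estimate for $S_\fz$, since the tents $\Gamma(x)$ and $\Gamma(\wz x)$ differ. I plan to perform the change of variables $\xi=\eta+x$ in $S_\fz(f)(x)$ and $\xi=\eta+\wz x$ in $S_\fz(f)(\wz x)$, so that both integrals run over the common region $\{(\eta,t):\ |\eta|<t,\ t\ge 4r\}$; the reverse triangle inequality for the $L^2$ norm then gives
\begin{align*}
|S_\fz(f)(x)-S_\fz(f)(\wz x)|\le\lf[\int_{4r}^\fz\int_{|\eta|<t}|f\ast\varphi_t(\eta+x)-f\ast\varphi_t(\eta+\wz x)|^2\,\frac{d\eta\,dt}{t^{n+1}}\r]^{\frac12}.
\end{align*}
I would replace $f$ by $f-f_{8B}$, split in $z$, handle $z\in 8B$ by $|\varphi_t|\ls t^{-n}$, and, for $z\notin 8B$, apply the mean value theorem together with \eqref{g-3'} to the difference $\varphi_t(\eta+x-z)-\varphi_t(\eta+\wz x-z)$.

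The crucial geometric observation is that, for $x_\theta:=x+\theta(\wz x-x)\in B$ with $\theta\in(0,1)$, one has $t+|\eta+x_\theta-z|\sim t+|y-z|$ \emph{uniformly} in $\eta\in B(0,t)$; this follows from a case analysis on $t\le|y-z|$ versus $t>|y-z|$ together with the triangle inequality $|\eta+x_\theta-z|\ge|x_\theta-z|-|\eta|\ge|y-z|-Cr-t$ and $|x_\theta-z|\sim|y-z|$. Once this comparability is in hand, the resulting integrand is independent of $\eta$, the $\eta$-integral contributes $|B(0,t)|\sim t^n$, and splitting the $t$-integral at $t=|y-z|$ as in Lemma~\ref{lem-g-fz} yields
\begin{align*}
|S_\fz(f)(x)-S_\fz(f)(\wz x)|\ls r\int_{\rn\setminus8B}\frac{|f(z)-f_{8B}|}{|y-z|^{n+1}}\,dz+\fint_{8B}|f(z)-f_{8B}|\,dz.
\end{align*}
Applying Lemma~\ref{I-JN}, using $\dz\in(0,1]$ so that $2^{-k}\le 2^{-k\dz}$, and invoking the H\"older inequality then finish the proof. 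The main obstacle I anticipate is establishing the uniform geometric comparability in tents $B(0,t)$ with $t\gg r$, since $\eta$ can be much larger than the ball radius; apart from this, the argument runs in parallel with Lemma~\ref{lem-g-fz}.
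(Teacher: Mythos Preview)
Your proposal is correct and follows the paper's proof essentially step for step, including the change of variables $\xi\mapsto\eta+x$ in $S_\infty$ and the mean value argument on $\varphi$. The geometric comparability $t+|\eta+x_\theta-z|\sim t+|y-z|$ that you flag as the main obstacle is precisely what the paper asserts without detail, and your proposed case split on $t\le|y-z|$ versus $t>|y-z|$ is exactly the right way to verify it.
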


\begin{proof}
Let $q$, $\dz$, and $S$ be the same as in the present lemma.
Let $f\in L^1_{\mathrm{loc}}(\rn)$ and assume that there exists one $x_0\in\rn$ such that
$S(f)(x_0)<\fz$. Also, let $B:=B(y,r)$ be any given ball of $\rn$
with $y\in\rn$ and $r\in(0,\fz)$ such that $x_0\in B$. Moreover,
for any $x\in \rn$, let
\begin{align*}
S_{r}(f)(x):=\lf[\int_0^{4r}\int_{|\xi-x|<t}\lf|f\ast
\varphi_t(\xi)\r|^2\,\frac{d\xi\,dt}{t^{n+1}}\r]^{\frac{1}{2}}
\end{align*}
and
\begin{align*}
S_{\infty}(f)(x):=\lf[\int_{4r}^{\infty}\int_{|\xi-x|<t}
\lf|f\ast \varphi_t(\xi)\r|^2\,\frac{d\xi\,dt}{t^{n+1}}\r]^{\frac{1}{2}}.
\end{align*}
Then, using \eqref{g-1} and $S(f)(x_0)<\fz$
[and hence $S_{\fz}(f)(x_0)<\fz$], we conclude that, for any $x\in B$,
\begin{align*}
0&\leq S(f)(x)-\inf_{\widetilde{x}\in B}S(f)(\widetilde{x})\\	
&\leq S_r(f)(x)+S_\fz(f)(x)-\inf_{\widetilde{x}\in B}S_{\fz}(f)(\widetilde{x})\\
&\leq S_r\lf([f-f_{8B}]\mathbf{1}_{8B}\r)(x)
+S_r\lf([f-f_{8B}]\mathbf{1}_{\rn\setminus 8B}\r)(x)\\
&\quad+\sup_{\{\wz x\in B:\ S_{\fz}(f)(\wz x)<\fz\}}\lf|S_{\fz}(f)(x)-S_{\fz}(f)(\wz x)\r|
\end{align*}
and hence
\begin{align}\label{S-00}
&\lf[\fint_{B}\lf\{S(f)(x)-\inf_{\widetilde{x}\in B}S(f)(\widetilde{x})
\r\}^q\,dx\r]^{\frac{1}{q}}\\
&\quad\leq\lf[\fint_{B}\lf|S_{r}\lf([f-f_{8B}]
\mathbf{1}_{8B}\r)(x)\r|^q\,dx\r]^{\frac{1}{q}}\noz\\
&\qquad+\lf[\fint_{B}\lf|S_{r}\lf([f-f_{8B}]
\mathbf{1}_{\rn\setminus8B}\r)(x)\r|^q\,dx\r]^{\frac{1}{q}}\noz\\
&\qquad+\lf\{\fint_{B}\lf[\sup_{\{\wz x\in B:\ S_{\fz}(f)(\wz x)<\fz\}}
\lf|S_{\fz}(f)(x)-S_{\fz}(f)(\wz x)\r|\r]^q\,dx
\r\}^{\frac{1}{q}}\noz\\
&\quad=:\mathrm{S}_1+\mathrm{S}_2+\mathrm{S}_3.\noz
\end{align}

We first estimate $\mathrm{S}_1$.
Indeed, from Lemma \ref{S-Lp}, we deduce that
\begin{align}\label{S-01}
\mathrm{S}_1
&\leq\lf[\frac{1}{|B|}\int_{\rn}
\lf|S\lf([f-f_{8B}]
\mathbf{1}_{8B}\r)(x)\r|^q\,dx\r]^{\frac{1}{q}}
\lesssim\lf[\fint_{8B}
\lf|f(x)-f_{8B}\r|^q\,dx\r]^{\frac{1}{q}}.
\end{align}
This is a desired estimate of $\mathrm{S}_1$.

Next, we consider $\mathrm{S}_2$. Indeed, noticing that,
for any $t\in (0,4r)$, $\xi\in B(x,t)$,
and $z\in \rn\setminus 8B$,
\begin{align*}
t+|z-\xi|&\geq t+|z-y|-|y-x|-|x-\xi|\\
&\geq  t+\frac{|z-y|}{2}+4r-|y-x|-|x-\xi|
\gtrsim |z-y|,
\end{align*}
by this, \eqref{g-2}, and Lemma \ref{I-JN}, we conclude that,
for any $x\in B$,
\begin{align*}
&S_{r}\lf([f-f_{8B}]\mathbf{1}_{\rn\setminus8B}\r)(x)\\
&\quad\leq\lf\{\int_0^{4r}\int_{|\xi-x|<t}
\lf[\int_{\rn\setminus8B}\lf|f(z)-f_{8B}\r|
\frac{1}{t^n}\varphi\lf(\frac{\xi-z}{t}\r)\,dz\r]^2
\,\frac{d\xi\,dt}{t^{n+1}}\r\}^{\frac{1}{2}}\\
&\quad\lesssim\lf\{\int_{0}^{4r}
\int_{|\xi-x|<t}\lf[\int_{\rn\setminus8B}
\frac{|f(z)-f_{8B}|}{t^n}\frac{t^{n+\dz}}
{(t+|\xi-z|)^{n+\dz}}\,dz\r]^2
\,\frac{d\xi\,dt}{t^{n+1}}\r\}^{\frac{1}{2}}\\
&\quad\lesssim\lf\{\int_{0}^{4r}\int_{|\xi-x|<t}
\lf[\int_{\rn\setminus8B}
\frac{|f(z)-f_{8B}|}{|y-z|^{n+\dz}}\,dz\r]^2
\,\frac{d\xi\,dt}{t^{n+1-2\dz}}\r\}^{\frac{1}{2}}\\
&\quad\sim\lf(\int_{0}^{4r}\int_{|\xi-x|<t}
\,\frac{d\xi\,dt}{t^{n+1-2\dz}}\r)^{\frac{1}{2}}
\int_{\rn\setminus8B}
\frac{|f(z)-f_{8B}|}{|y-z|^{n+\dz}}\,dz\\
&\quad\sim\int_{\rn\setminus8B}
\frac{r^\dz|f(z)-f_{8B}|}{|y-z|^{n+\dz}}\,dz
\lesssim\sum_{k=3}^{\infty}\frac{1}{2^{k\dz}}
\fint_{2^{k+1}B}\lf|f(z)-f_{2^{k+1}B}\r|\,dz
\end{align*}
and hence
\begin{align}\label{G2}
\mathrm{S}_2&\lesssim\sum_{k=3}^{\infty}\frac{1}{2^{k\dz}}
\fint_{2^{k+1}B}\lf|f(z)-f_{2^{k+1}B}\r|\,dz.
\end{align}
This is a desired estimate of $\mathrm{S}_2$.

Now, we estimate $\mathrm{S}_3$. To this end,
we first estimate $|S_{\infty}(f)(x)-S_{\infty}(f)(\wz x)|$ for any $x\in B$
and any $\wz x\in B$ such that $S_{\infty}(f)(\wz x)<\fz$.
Indeed, from the fact that $S(f)\geq 0$ everywhere on $\rn$,
we deduce that, for any $x\in B$ and any $\wz x\in B$
such that $S_{\infty}(f)(\wz x)<\fz$,
\begin{align}\label{G3-0}
&\lf|S_{\infty}(f)(x)-S_{\infty}(f)(\wz x)\r|\\
&\quad=\lf|\lf[\int_{4r}^{\infty}\int_{|\xi|<t}
\lf|f\ast \varphi_t(x+\xi)\r|^2\,\frac{d\xi\,dt}{t^{n+1}}\r]^{\frac{1}{2}}
-\lf[\int_{4r}^{\infty}\int_{|\xi|<t}
\lf|f\ast \varphi_t(\wz x+\xi)\r|^2
\,\frac{d\xi\,dt}{t^{n+1}}\r]^{\frac{1}{2}}\r|\noz\\
&\quad\leq\lf[\int_{4r}^{\infty}\int_{|\xi|<t}
\lf|f\ast\varphi_t(x+\xi)-f\ast\varphi_t(\wz x+\xi)\r|^2
\,\frac{d\xi\,dt}{t^{n+1}}\r]^{\frac{1}{2}}.\noz
\end{align}
Moreover, noticing that, for any
$t\in(4r,\fz)$, $\xi\in B(\mathbf{0},t)$, $x,\wz x\in B$,
$z\in \rn\setminus 8B$, and $\theta\in(0,1)$,
$$
t+|x+\xi-z+\theta(x-\wz x)|
\sim t+|y-z|,
$$
by this, \eqref{g-1}, the mean value theorem,
\eqref{g-2}, \eqref{g-3'}, and
the fact that $|\varphi_t(z)|\lesssim t^{-n}$ for any $z\in\rn$
and $t\in(0,\infty)$, we conclude that, for any
$t\in(4r,\fz)$, $\xi\in B(\mathbf{0},t)$, $x\in B$, and $\wz x\in B$
such that $S_{\infty}(f)(\wz x)<\fz$,
there exists a $\theta \in(0,1)$ such that
\begin{align*}
&\lf|f\ast\varphi_t(x+\xi)-f\ast\varphi_t(\wz x+\xi)\r|\\
&\quad=\lf|(f-f_{8B})\ast\varphi_t(x+\xi)
-(f-f_{8B})\ast\varphi_t(\wz x+\xi)\r|\\
&\quad\leq\lf|\lf([f-f_{8B}]
\mathbf{1}_{\rn\setminus8B}\r)\ast\varphi_t(x+\xi)
-\lf([f-f_{8B}]\mathbf{1}_{\rn\setminus8B}\r)
\ast\varphi_t(x_0+\xi)\r|\\
&\quad\quad+\lf|\lf([f-f_{8B}]\mathbf{1}_{8B}\r)\ast\varphi_t(x+\xi)\r|
+\lf|\lf([f-f_{8B}]\mathbf{1}_{8B}\r)
\ast\varphi_t(\wz x+\xi)\r|\\
&\quad\leq\int_{\rn\setminus8B}\lf|f(z)-f_{8B}\r|
\lf|\varphi\lf(\frac{x+\xi-z}{t}\r)
-\varphi\lf(\frac{\wz x+\xi-z}{t}\r)\r|\,\frac{dz}{t^n}\\
&\quad\quad+\int_{8B}\lf|f(z)-f_{8B}\r|
\lf|\varphi\lf(\frac{x+\xi-z}{t}\r)\r|\,\frac{dz}{t^n}\\
&\quad\quad+\int_{8B}\lf|f(z)-f_{8B}\r|
\lf|\varphi\lf(\frac{\wz x+\xi-z}{t}\r)\r|\,\frac{dz}{t^n}\\
&\quad\lesssim\int_{\rn\setminus8B}
\frac{t^\dz|f(z)-f_{8B}||x-\wz x|}
{(t+|x+\xi-z+\theta(x-\wz x)|)^{n+1+\dz}}\,dz
+\int_{8B}\frac{|f(z)-f_{8B}|}{t^n}\,dz\\
&\quad\sim\int_{\rn\setminus8B}\lf|f(z)-f_{8B}\r|
\frac{t^\dz|x-\wz x|}{(t+|y-z|)^{n+1+\dz}}\,dz
+\int_{8B}\frac{|f(z)-f_{8B}|}{t^n}\,dz.
\end{align*}
Using this and an argument similar to that used in the estimation of \eqref{g10},
we conclude that, for any $x\in B$ and any $\wz x\in B$
such that $S_{\infty}(f)(\wz x)<\fz$,
\begin{align*}
&\lf[\int_{4r}^{\infty}\int_{|\xi|< t}
\lf|f\ast\varphi_t(x+\xi)-f\ast\varphi_t(\wz x+\xi)
\r|^2\,\frac{dt}{t^{n+1}}\r]^{\frac{1}{2}}\\
&\quad\lesssim\lf\{\int_{4r}^{\infty}
\int_{|\xi|< t}\lf[\int_{\rn\setminus8B}
\lf|f(z)-f_{8B}\r|\frac{t^\dz|x-\wz x|}{(t+|y-z|)^{n+1+\dz}}\,dz\r]^2
\,\frac{d\xi\,dt}{t^{n+1}}\r\}^{\frac{1}{2}}\\
&\quad\quad+\lf\{\int_{4r}^{\infty}\int_{|\xi|< t}
\lf[\int_{8B}\frac{|f(z)-f_{8B}|}{t^n}\,dz\r]^2
\,\frac{d\xi\,dt}{t^{n+1}}\r\}^{\frac{1}{2}}\\
&\quad\sim\lf\{\int_{4r}^{\infty}\lf[\int_{\rn\setminus8B}
\lf|f(z)-f_{8B}\r|\frac{t^\dz|x-\wz x|}{(t+|y-z|)^{n+1+\dz}}\,dz\r]^2
\,\frac{dt}{t}\r\}^{\frac{1}{2}}\\
&\quad\quad+\lf\{\int_{4r}^{\infty}\lf[
\int_{8B}\frac{|f(z)-f_{8B}|}{t^n}\,dz\r]^2
\,\frac{dt}{t}\r\}^{\frac{1}{2}}\\
&\quad\lesssim\sum_{k=1}^{\infty}\frac{1}{2^{k}}
\fint_{2^{k+1}B}\lf|f(z)-f_{2^{k+1}B}\r|\,dz.
\end{align*}
By this and \eqref{G3-0}, we conclude that
\begin{align*}
\mathrm{S}_3&\lesssim\sum_{k=1}^{\infty}\frac{1}{2^{k}}
\fint_{2^{k}B}\lf|f(z)-f_{2^{k}B}\r|\,dz,
\end{align*}
which, together with \eqref{S-00}, \eqref{S-01}, \eqref{G2}, $\dz\in(0,1]$,
and the H\"older inequality,
further implies that \eqref{lem-s-01} holds true, and hence finishes
the proof of Lemma \ref{lem-s-fz}.
\end{proof}

Next, we show Theorem \ref{thm-B-Banach-S}.
\begin{proof}[Proof of Theorem \ref{thm-B-Banach-S}]
Let all the symbols be the same as in the present theorem.
Indeed, by repeating the proof of Theorem \ref{thm-B-Banach} with $g$ and Lemma \ref{lem-g-fz}
therein replaced, respectively, by $S$ and Lemma \ref{lem-s-fz}, we
complete the proof of Theorem \ref{thm-B-Banach-S}.
\end{proof}

\begin{remark}\label{S-bounded-remark-B}
\begin{enumerate}
\item[\rm (i)]
Let $p\in(\frac{n}{n+1},\fz)$,
$X:=L^p(\rn)$, $\dz=1$, and $q\in(1,\fz)$. In this case,
$\mathcal{L}_{X,q,0}(\rn)=\mathcal{C}_{\frac{1}{p}-1,q,0}(\rn)$ and hence
the corresponding result on $\mathcal{L}_{X,q,0}(\rn)$ of
Theorem \ref{thm-B-Banach-S} coincides with \cite[Theorem 2]{Sun04}.
Moreover, when $p=1$, we have $\mathcal{L}_{X,q,0}(\rn)=\mathrm{BMO\,}(\rn)$
and hence the corresponding result on $\mathcal{L}_{X,q,0}(\rn)$ of Theorem \ref{thm-B-Banach}
in this case coincides with \cite[Corollary 1.2]{MY}.

\item[\rm (ii)]
Let $s\in\nn$. In this case, it is still unclear
whether or not the Littlewood--Paley $g$-function
is bounded on either $\mathcal{L}_{X,q,s}(\rn)$ or $\mathcal{L}_{X,q,s,d}(\rn)$.
\end{enumerate}
\end{remark}

\section{Littlewood--Paley $g_\lambda^*$-Functions on Both
$\mathcal{L}_{X,q,0}(\rn)$ and $\mathcal{L}_{X,q,0,d}(\rn)$\label{g*bounded}}

The aim of this section is to estimate the Littlewood--Paley
$g_\lambda^*$-function on both $\mathcal{L}_{X,q,0}(\rn)$ and $\mathcal{L}_{X,q,0,d}(\rn)$.
Moreover, even for the Campanato space $\mathcal{C}_{\alpha,q,0}(\rn)$,
we obtain a better range of the index $\lambda$.
To be precise, we prove the following two theorems.

\begin{theorem}\label{g-l-bounded3}
Let both $X$ and $p_-$ satisfy Assumption \ref{assump1},
$q\in[1,\fz)$, $\dz\in(0,1]$,
$\lambda\in (1,\infty)$, and $g_{\lambda}^{*}$ be the same as in \eqref{lam-function}.
If either of the following statements holds true:
\begin{enumerate}
\item[\rm (i)]
$p_-\in(\frac{n}{n+\frac{\dz}{2}},\fz)$
and $\lambda\in(\max\{1,\frac{2}{q}\},\infty)$;
\item[\rm (ii)]
$p_-\in(\frac{n}{n+\dz}, \frac{n}{n+\frac{\dz}{2}}]$ and
$\lambda\in(\max\{1,\frac{2}{q}\}+\frac{2}{n},\infty)\cap(\frac{2}{p_-},\infty)$,
\end{enumerate}
then, for any $f\in \mathcal{L}_{X,q,0}(\rn)$, $g_{\lambda}^{*}(f)$ is either
infinite everywhere or finite almost everywhere and, in the latter case,
there exists a positive constant $C$, independent of $f$, such that
\begin{align*}
\lf\|g_{\lambda}^{*}(f)\r\|_{\mathcal{L}^{\mathrm{low}}_{X,q,0}(\rn)}
\leq C\|f\|_{\mathcal{L}_{X,q,0}(\rn)}.
\end{align*}
\end{theorem}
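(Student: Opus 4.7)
The plan is to emulate the architecture of Theorems \ref{thm-B-Banach} and \ref{thm-B-Banach-S}, with the Littlewood--Paley $g$- and $S$-functions replaced by $g_\lambda^*$. The whole argument is driven by a local estimate, which plays the role of Lemmas \ref{lem-g-fz} and \ref{lem-s-fz}: for any $f\in L^1_{\loc}(\rn)$ with $g_\lambda^*(f)(x_0)<\fz$ for some $x_0\in\rn$, and any ball $B\ni x_0$,
\[
\lf[\fint_{B}\lf\{g_\lambda^*(f)(x)-\inf_{\wz x\in B}g_\lambda^*(f)(\wz x)\r\}^q\,dx\r]^{1/q}
\ls\sum_{k=1}^{\fz}\frac{1}{2^{k\dz}}\lf[\fint_{2^kB}\lf|f(z)-f_{2^kB}\r|^q\,dz\r]^{1/q}.
\]
Once this pointwise-over-balls bound is in hand, the a.e.\ finiteness dichotomy and the norm inequality follow exactly as in the second half of the proof of Theorem \ref{thm-B-Banach}: apply Lemma \ref{key-rem-B} with some $r\in(\frac{n}{n+\dz},p_-)$ to absorb the expansion factor $2^{kn/r}$ against the decay $2^{-k\dz}$, and invoke the definition of $\|\cdot\|_{\mathcal{L}_{X,q,0}(\rn)}$.

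For the key estimate, I would split $g_\lambda^*=g_{\lambda,r}^*+g_{\lambda,\fz}^*$ along $t\in(0,4r)$ and $t\in(4r,\fz)$, and decompose $f=(f-f_{8B})\mathbf{1}_{8B}+(f-f_{8B})\mathbf{1}_{\rn\setminus8B}+f_{8B}$ (the constant contribution is killed by \eqref{g-1}). As in the Lusin-area proof, the mean oscillation over $B$ then splits into three terms:
\begin{enumerate}
\item[(I)] $g_{\lambda,r}^{*}\big((f-f_{8B})\mathbf{1}_{8B}\big)$ in $L^q(B)$, handled by the classical $L^q$-boundedness of $g_\lambda^*$ (which is exactly where the restriction $\lambda>\max\{1,2/q\}$ enters);
\item[(II)] $g_{\lambda,r}^{*}\big((f-f_{8B})\mathbf{1}_{\rn\setminus8B}\big)$, handled by the size bound \eqref{g-2};
\item[(III)] $\sup_{\wz x\in B,\,g_{\lambda,\fz}^{*}(f)(\wz x)<\fz}|g_{\lambda,\fz}^{*}(f)(x)-g_{\lambda,\fz}^{*}(f)(\wz x)|$, handled by the mean value theorem applied to $\varphi$ via \eqref{g-3'}, in the spirit of the estimate leading to \eqref{g10}.
\end{enumerate}

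The principal technical obstacle is the extra weight $\bigl(\frac{t}{t+|x-\xi|}\bigr)^{\lambda n}$: unlike in the Lusin-area integral, $\xi$ ranges over all of $\rn$, not only over the cone $\Gamma(x)$. The remedy is the ``delicate division of $\rn$'' advertised in the introduction: for fixed $t$ I would split the $\xi$-integration over $B(y,2t)\cup(\rn\setminus B(y,2t))$, further refined dyadically into annuli $\{2^j t\le |\xi-y|<2^{j+1}t\}$. In $B(y,2t)$ the weight is $\sim 1$ and the computation reduces to the Lusin-area case (Lemma \ref{lem-s-fz}); on each annulus the weight contributes $2^{-j\lambda n}$, the $\xi$-volume contributes $2^{jn}t^n$, and the convergence of $\sum_j 2^{j(n-\lambda n)}$ forces $\lambda>1$. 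Combined with $|\varphi_t(\xi-z)|\ls t^\dz/(t+|\xi-z|)^{n+\dz}$ and, in (III), the mean value version with $\dz$ replaced by $1\wedge\dz$, the $\xi,t$-integrations produce factors $\int_0^{4r}t^{2\dz+a-1}\,dt$ and $\int_{4r}^{\fz}t^{2\dz-2n-1}\,dt$ whose finiteness is precisely what distinguishes cases (i) and (ii): in regime (i) the decay of $\varphi$ (via $\dz$) already guarantees the geometric series against the Fefferman--Stein exponent $2^{kn/r}$ with $r<p_-$ when $p_->\frac{n}{n+\dz/2}$; in regime (ii), where $\dz$ is too small relative to $p_-$, the deficit is compensated by the extra margin $\lambda>\max\{1,2/q\}+2/n$ and $\lambda>2/p_-$, which exactly yields enough weighted decay in the remote $\xi$-region and on the annular $(4r,\fz)$-range of $t$. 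Aggregating (I)--(III), passing from the $L^1$ oscillations produced in (II)--(III) to $L^q$ oscillations by H\"older's inequality, and applying Lemma \ref{sum-g} to absorb $f_{8B}$ into $f_{2^kB}$, then yields the claimed local estimate, finishing the proof.
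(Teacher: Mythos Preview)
Your opening local estimate is stated with decay $2^{-k\dz}$, but that is not what one can prove for $g_\lambda^*$ under the hypothesis of case~(i), and this discrepancy is exactly what drives the dichotomy in the theorem. In the paper (Lemma~\ref{g-lam-01}) the local estimate comes in two versions: under $\lambda>\max\{1,2/q\}$ alone one only obtains decay $2^{-k\dz/2}$, which forces $r\in(\frac{n}{n+\dz/2},p_-)$ in the summation step and hence $p_->\frac{n}{n+\dz/2}$; under the stronger assumption $\lambda>\max\{1,2/q\}+\frac{2}{n}$ and $\lambda>2$ one gets decay $2^{-k\dz}+2^{-kn(\lambda-2)/2}$, and the second term is what produces the condition $\lambda>2/p_-$. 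Your plan, which treats both cases with a single $2^{-k\dz}$ estimate and then ``compensates'' in case~(ii), does not match these mechanisms and would not explain why case~(i) needs $p_->\frac{n}{n+\dz/2}$.

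The more substantive gap is in part~(III). Your description---shift $\xi$, bound $|f*\varphi_t(x+\xi)-f*\varphi_t(\wz x+\xi)|$ via \eqref{g-3'}, then split $\xi$ into dyadic annuli around $y$---is essentially the paper's case~(ii) decomposition, and it genuinely needs the extra $+\frac{2}{n}$ in $\lambda$. For case~(i) the paper uses a \emph{different} device: instead of shifting $\xi$, it bounds the difference of the weights,
\[
\lf|\lf(\tfrac{t}{t+|x-\xi|}\r)^{\lambda n}-\lf(\tfrac{t}{t+|\wz x-\xi|}\r)^{\lambda n}\r|\ls\frac{r\,t^{\lambda n}}{(t+|y-\xi|)^{\lambda n+1}},
\]
and then splits $\rr^{n+1}_+\setminus J(y,0)$ into the regions $J(y,k)\setminus J(y,k-1)$ (simultaneous dyadic scales in $\xi$ \emph{and} $t$), not into $\xi$-annuli at fixed $t$. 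Crucially, the ``near'' part $g_{\lambda,y,\infty}^{*,1}$ (with $z\in 2^{k+3}B$) is controlled via the $L^a\!\to\!L^2$ boundedness of the fractional integral $I_{n(\lambda-1)/2}$ (Lemma~\ref{fractional}) with $a=2/\lambda\in(1,q)$; this is the step that ties the $\lambda$-range to $q$ and that your annular estimate $\sum_j 2^{j(n-\lambda n)}<\fz$ does not capture. The ``far'' part $g_{\lambda,y,\infty}^{*,2}$ is where the loss from $\dz$ to $\dz/2$ occurs, through $\sum_{k=1}^{l-3}2^{k(2\dz-1)/2}\ls 2^{l\dz/2}$. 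Without these two ingredients your sketch does not close in case~(i).
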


\begin{theorem}\label{g-l-bounded4}
Let both $X$ and $p_-\in(0,\fz)$ satisfy Assumption \ref{assump1},
$q\in[1,\fz)$, $d\in(0,\fz)$, $\dz\in(0,1]$,
$\lambda\in (1,\infty)$, and $g_{\lambda}^{*}$ be the same as in \eqref{lam-function}.
If either of the following statements holds true:
\begin{enumerate}
\item[\rm (i)]
$\min\{d,p_-\}\in(\frac{n}{n+\frac{\dz}{2}},\fz)$
and $\lambda\in(\max\{1,\frac{2}{q}\},\infty)$;
\item[\rm (ii)]
$\min\{d,p_-\}\in(\frac{n}{n+\dz},\frac{n}{n+\frac{\dz}{2}}]$ and
$\lambda\in(\max\{1,\frac{2}{q}\}+\frac{2}{n},\infty)
\cap(\frac{2}{\min\{d,p_-\}},\infty)$,
\end{enumerate}
then, for any $f\in \mathcal{L}_{X,q,0,d}(\rn)$, $g_{\lambda}^{*}(f)$ is either
infinite everywhere or finite almost everywhere and, in the latter case,
there exists a positive constant $C$, independent of $f$, such that
\begin{align}\label{thm-b-3'}
\lf\|g_{\lambda}^{*}(f)\r\|_{\mathcal{L}^{\mathrm{low}}_{X,q,0,d}(\rn)}
\leq C\|f\|_{\mathcal{L}_{X,q,0,d}(\rn)}.
\end{align}
\end{theorem}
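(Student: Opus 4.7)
The plan is to mirror the template established in the proofs of Theorems \ref{thm-B-Banach} and \ref{thm-B-Banach-S}. I first intend to establish an analogue of Lemmas \ref{lem-g-fz} and \ref{lem-s-fz} for the $g_\lambda^*$-function, namely, a mean-oscillation inequality of the form
\begin{align*}
\lf[\fint_B\lf\{g_\lambda^*(f)(x)-\inf_{\wz x\in B}g_\lambda^*(f)(\wz x)\r\}^q\,dx\r]^{\frac{1}{q}}
\leq C\sum_{k=1}^{\fz}2^{-k\eta}\lf[\fint_{2^k B}\lf|f(z)-f_{2^kB}\r|^q\,dz\r]^{\frac{1}{q}}
\end{align*}
valid for any ball $B\ni x_0$ as long as $g_\lambda^*(f)(x_0)<\fz$, with a decay exponent $\eta=\eta(\lambda,\dz,q,n)$ that, under either hypothesis (i) or (ii), satisfies $\eta>n/r-n$ for some admissible $r\in(n/(n+\dz),\min\{d,p_-\})$ supplied by Assumption \ref{assump1}.

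Once this oscillation estimate is in hand, the remainder of the deduction of \eqref{thm-b-3'} is essentially verbatim from the proof of Theorem \ref{thm-B-Banach}: set $\lambda_{i,k}:=\lambda_i\|\mathbf{1}_{2^kB_i}\|_X/\|\mathbf{1}_{B_i}\|_X$, insert the oscillation estimate into the supremum defining $\|\cdot\|_{\mathcal{L}^{\mathrm{low}}_{X,q,0,d}(\rn)}$, absorb each dilation factor $2^{kn/r}$ via Lemma \ref{key-rem-B}, and sum a convergent geometric series in $k$ against $\|f\|_{\mathcal{L}_{X,q,0,d}(\rn)}$. The dichotomy \emph{infinite everywhere or finite almost everywhere} will then follow, as in the proof of Theorem \ref{thm-B-Banach}, by applying the oscillation inequality on any ball containing a point at which $g_\lambda^*(f)$ is finite, and invoking the arbitrariness of the ball.

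The principal obstacle is the oscillation estimate itself, which calls for the delicate division of $\rn$ advertised in the introduction. My plan is to split $g_\lambda^*(f)^2=g_{\lambda,r}^*(f)^2+g_{\lambda,\fz}^*(f)^2$ according to $t\leq 4r$ versus $t>4r$, and to decompose $f=f_{8B}+(f-f_{8B})\mathbf{1}_{8B}+(f-f_{8B})\mathbf{1}_{\rn\setminus 8B}$, producing three terms $\mathrm{T}_1, \mathrm{T}_2, \mathrm{T}_3$ analogous to those appearing in the proofs of Lemmas \ref{lem-g-fz} and \ref{lem-s-fz}. Term $\mathrm{T}_1$ is disposed of by the $L^q$-boundedness of $g_\lambda^*$, which is precisely where the hypothesis $\lambda>\max\{1,2/q\}$ enters. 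For $\mathrm{T}_2$ and $\mathrm{T}_3$ the genuinely new difficulty is that the weight $(t/(t+|x-\xi|))^{\lambda n}$ forces integration of $\xi$ over all of $\rn$, rather than over the Lusin cone as in the proof of Lemma \ref{lem-s-fz}. For each fixed $t$ and each dyadic annulus around $B$ I plan to split the $\xi$-integral into $\{|x-\xi|\leq t\}$ and $\{|x-\xi|>t\}$; on the inner piece the weight is harmless and one recovers a Lusin-area-type argument from \eqref{g-2} (or \eqref{g-3'} for $\mathrm{T}_3$ via the mean value theorem), while on the outer piece the weight supplies extra decay $(t/|x-\xi|)^{\lambda n}$ that must be balanced against the kernel bound $t^\dz/(t+|\xi-z|)^{n+\dz}$ and then integrated in $t\in(0,4r)$ or $t\in(4r,\fz)$ accordingly.

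The case analysis on $(\lambda,\dz,\min\{d,p_-\})$ in the hypotheses records exactly which of these two decay mechanisms is binding. In case (i), where $\min\{d,p_-\}>n/(n+\dz/2)$, the smoothness of $\varphi$ alone produces a summable geometric series in $k$ and any $\lambda>\max\{1,2/q\}$ suffices; in case (ii), where $\dz$ is too small, the weight must supply the missing decay, which forces the stronger constraints $\lambda>2/\min\{d,p_-\}$ and $\lambda>\max\{1,2/q\}+2/n$. Tracking these two regimes simultaneously, and verifying that the resulting $\eta$ is strictly greater than $n/r-n$ for some admissible $r$, is the main technical content I expect to require; the widening of the range of $\lambda$ over existing results advertised in the introduction should be a byproduct of this more careful decomposition.
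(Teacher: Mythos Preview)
Your overall architecture is right: derive an oscillation inequality for $g_\lambda^*$ analogous to Lemmas \ref{lem-g-fz} and \ref{lem-s-fz}, then feed it into the template of the proof of Theorem \ref{thm-B-Banach}. The paper does exactly this, via Lemma \ref{g-lam-01}. However, your plan for the oscillation inequality itself diverges from the paper in ways that matter for obtaining the stated ranges of $\lambda$.

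First, the paper does \emph{not} use a single decomposition. For case (i) it splits $\rr_+^{n+1}$ along the box $J(y,0)=\{|\xi-y|<4r,\,0<t<4r\}$ and, for the far piece $\Lambda_3$, applies the mean value theorem to the \emph{weight} $(t/(t+|x-\xi|))^{\lambda n}$, not to $\varphi$. For case (ii) it instead splits along $\widetilde J(0)=\{0<t<4r\}$ (your proposed split) and applies the mean value theorem to $\varphi$. Your scheme matches only the second of these. If you run only the $\varphi$-based argument, the near term forces $\lambda>2$-type decay and you will not recover case (i) under the weaker hypothesis $\lambda>\max\{1,2/q\}$.

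Second, the decisive analytic tool you do not mention is the boundedness of the Riesz potential $I_\beta:L^a\to L^2$ (Lemma \ref{fractional}). In both cases the dangerous term is the one with $z\in 2^{k+3}B$ and $\xi$ unrestricted: after Minkowski in $\xi$ the inner $z$-integral becomes $I_\beta(|f-f_B|\mathbf 1_{2^{k+3}B})$ with $\beta=\tfrac{n(\lambda-1)}{2}$ in case (i) and $\beta=\tfrac{\lambda n-n-2}{2}$ in case (ii). Applying $I_\beta:L^{2/\lambda}\to L^2$ (resp.\ $I_\beta:L^{(\lambda/2-1/n)^{-1}}\to L^2$) is exactly where $\lambda>2/q$ (resp.\ $\lambda>2/q+2/n$) enters a second time, beyond the $L^q$-boundedness of $g_\lambda^*$. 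Your cone/off-cone split in $\xi$ may handle the pieces where $z$ and $\xi$ are well separated, but it does not by itself control the local interaction $|\xi-z|\to 0$; the paper resolves this precisely via Hardy--Littlewood--Sobolev.

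Finally, your reading of the case split is off: case (i) is not ``smoothness of $\varphi$ alone'' versus case (ii) ``weight supplies decay''. Both cases use the fractional integral; the distinction is which $\beta$ is admissible, and that in turn dictates whether one differentiates the weight (yielding $\eta=\dz/2$) or the kernel (yielding $\eta=\min\{\dz,\,n(\lambda-2)/2\}$). Matching these $\eta$'s against $n/r-n$ for $r$ just below $\min\{d,p_-\}$ is what produces the stated dichotomy.
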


To show the above two theorems, we give some technique lemmas first.
The following lemma on the boundedness of the Littlewood--Paley
$g_\lambda^*$-function on Lebesgue spaces
is well known (see, for instance, \cite[pp.\,316-318]{EMS2}).

\begin{lemma}\label{Lam-Lp}
Let $q\in(1,\infty)$, $\lambda\in(\max\{1,\frac{2}{q}\},\infty)$,
and $g_\lambda^*$ be the same as in \eqref{lam-function}.
Then $g_\lambda^*$ is bounded on $L^q(\rn)$, namely,
there exists a positive constant $C$ such that,
for any $f\in L^q(\rn)$,
\begin{align*}
\lf\|g_\lambda^*(f)\r\|_{L^q(\rn)}\leq C\|f\|_{L^q(\rn)}.
\end{align*}
\end{lemma}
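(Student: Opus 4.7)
The plan is to reduce $g_\lambda^*$ to the Lusin area function $S$ via a dyadic decomposition of the weight $(t/(t+|x-\xi|))^{\lambda n}$, and then to invoke $L^q$-estimates for the area function with enlarged aperture. For each $k\in\zz_+$, I split $\rr^{n+1}_+$ into the annular regions $A_k(x):=\{(\xi,t)\in\rr^{n+1}_+:2^{k-1}t\leq|\xi-x|<2^k t\}$ (with $A_0(x)$ taken to be the cone $\Gamma(x)$). On $A_k(x)$ the weight obeys $(t/(t+|x-\xi|))^{\lambda n}\ls 2^{-k\lambda n}$, producing the pointwise bound
\begin{align*}
\lf[g_\lambda^*(f)(x)\r]^2 \ls \sum_{k=0}^{\fz} 2^{-k\lambda n}\iint_{\{|\xi-x|<2^k t\}}\lf|f\ast\varphi_t(\xi)\r|^2\frac{d\xi\,dt}{t^{n+1}} =: \sum_{k=0}^{\fz}2^{-k\lambda n}\lf[S_{2^k}(f)(x)\r]^2,
\end{align*}
where $S_a$ denotes the Lusin area function attached to the wider cone $\{|\xi-x|<at\}$.

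The second ingredient is the aperture-scaling estimate $\|S_a(f)\|_{L^q(\rn)}\ls a^{n\max\{1/2,1/q\}}\|f\|_{L^q(\rn)}$ for $a\geq 1$. For $q=2$ this is immediate from Fubini, since $\|S_a(f)\|_{L^2(\rn)}^2$ equals a constant times $a^n\|g(f)\|_{L^2(\rn)}^2\ls a^n\|f\|_{L^2(\rn)}^2$. For $q\in(2,\fz)$, the bound with exponent $a^{n/2}$ then follows by duality combined with the $L^{q'}$-boundedness of $S_1$ from Lemma \ref{S-Lp}. For $q\in(1,2)$, the sharper exponent $a^{n/q}$ is obtained via a good-$\lambda$ comparison between $S_a(f)$ and $S_1(f)$, or equivalently via a Coifman--Meyer--Stein tent-space argument.

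Given these two ingredients, summation completes the proof. When $q\in[2,\fz)$, $L^{q/2}$ is a Banach space and Minkowski's inequality yields
\begin{align*}
\|g_\lambda^*(f)\|_{L^q(\rn)}^2 \ls \sum_{k=0}^{\fz}2^{-k\lambda n}\|S_{2^k}(f)\|_{L^q(\rn)}^2 \ls \|f\|_{L^q(\rn)}^2\sum_{k=0}^{\fz}2^{-kn(\lambda-1)},
\end{align*}
which converges precisely when $\lambda>1$. When $q\in(1,2)$, $L^{q/2}$ is only quasi-Banach of exponent $q/2<1$, so the $p$-triangle inequality together with the sharper aperture bound gives
\begin{align*}
\|g_\lambda^*(f)\|_{L^q(\rn)}^q \ls \sum_{k=0}^{\fz}2^{-k\lambda nq/2}\|S_{2^k}(f)\|_{L^q(\rn)}^q \ls \|f\|_{L^q(\rn)}^q\sum_{k=0}^{\fz}2^{-kn(\lambda q/2-1)},
\end{align*}
which converges precisely when $\lambda>2/q$. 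The combined restriction is exactly $\lambda>\max\{1,2/q\}$, as required.

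The main obstacle is the aperture-scaling bound for $S_a$ in the range $q\in(1,2)$: the naive Fubini argument only delivers the $q=2$ exponent $a^{n/2}$, which when fed into the summation step would force the strictly stronger hypothesis $\lambda>1$ for every $q$, not the conclusion $\lambda>2/q$ claimed when $q<2$. Obtaining the sharper $a^{n/q}$ scaling is the genuine harmonic-analytic content of the lemma and requires an auxiliary tool such as a good-$\lambda$ inequality or a Calder\'on--Zygmund-type decomposition at the tent-space level; once this is in hand, Steps 1 and 3 above are routine.
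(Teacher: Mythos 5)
The paper does not supply a proof of this lemma; it is simply cited from Stein's book (\cite[pp.~316--318]{EMS2}), so there is no in-paper argument to compare against. Your proposal --- the dyadic annular decomposition of the weight $(t/(t+|\xi-x|))^{\lambda n}$, reducing $g_\lambda^*$ to a weighted sum of aperture-$2^k$ area functions, followed by a change-of-aperture estimate --- is the standard route and essentially what the cited reference does. Your bookkeeping in the summation step is also correct: Minkowski in $L^{q/2}$ for $q\geq 2$ yields convergence iff $\lambda>1$, while the $q/2$-triangle inequality together with the aperture exponent $n/q$ yields convergence iff $\lambda>2/q$ for $q\in(1,2)$, reproducing the stated range $\lambda>\max\{1,2/q\}$.

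The genuine gap, which you flag honestly, is that the change-of-aperture bound $\|S_a f\|_{L^q}\lesssim a^{n/q}\|f\|_{L^q}$ for $q\in(1,2)$ is asserted rather than proved, and it is precisely this ingredient that makes $\lambda>2/q$ attainable: the sharp exponent $n/q$ for $q<2$ is \emph{larger} than $n/2$, so neither the Fubini argument nor interpolation with the $q=2$ case can produce it, and any weaker exponent $n c$ with $c>1/q$ only gives $\lambda>2c>2/q$. One really needs a good-$\lambda$ comparison between the distribution functions of $S_a f$ and $S_1 f$ (the tent-space argument of Coifman--Meyer--Stein) or a Calder\'on--Zygmund decomposition at the tent-space level (Auscher's change-of-angle estimate). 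Citing this aperture estimate as known would make your argument complete and is defensible, since the paper itself cites the whole lemma one level up. A smaller correction for $q>2$: the duality should be taken with $[S_af]^2$ in $L^{q/2}$ against $h\in L^{(q/2)'}$ (not $L^{q'}$); one bounds $\int_{|\xi-x|<at}|h|\lesssim(at)^n\mathcal{M}h(\xi)$, passes to $a^n\int[g(f)]^2\,\mathcal{M}h$, and closes using the $L^{(q/2)'}$-boundedness of $\mathcal{M}$ (which requires $q>2$) and Lemma \ref{g-Lp}, rather than the $L^{q'}$-boundedness of the area function.
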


The following concept of fractional integrals can be found in, for instance, \cite[p.\,117]{EMS1970}.

\begin{definition}\label{Def-I}
Let $\beta\in(0,n)$ and $p\in[1,\frac{n}{\beta})$.
The \emph{fractional integral operator} $I_\beta$ is defined by setting,
for any $f\in L^p(\rn)$ and $x\in \rn$,
$$I_\beta(f)(x):=\int_{\rn}\frac{f(y)}{|x-y|^{n-\beta}}\,dy.$$
\end{definition}

The following lemma is well known (see, for instance, \cite[p.\,119]{EMS1970}).
\begin{lemma}\label{fractional}
Let $\beta\in(0,n)$, $a\in(1,\frac{n}{\beta})$,
and $\frac{1}{\widetilde{a}}:=\frac{1}{a}-\frac{\beta}{n}$.
Then $I_\beta$ is bounded from $L^a(\rn)$ into $L^{\widetilde{a}}(\rn)$,
namely, there exists a positive constant $C$ such that,
for any $f\in L^a(\rn)$,
$$\lf\|I_\beta(f)\r\|_{L^{\widetilde{a}}(\rn)}\leq C\|f\|_{L^a(\rn)}.$$
\end{lemma}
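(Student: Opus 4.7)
The plan is to prove Lemma \ref{fractional} via Hedberg's pointwise estimate combined with the $L^a$-boundedness of the Hardy--Littlewood maximal operator $\mathcal{M}$. The key is to split, for each fixed $x\in\rn$ and a parameter $r\in(0,\fz)$ to be optimized later, the defining integral of $I_\beta(f)(x)$ as
\begin{align*}
|I_\beta(f)(x)|
\leq \int_{B(x,r)}\frac{|f(y)|}{|x-y|^{n-\beta}}\,dy
+\int_{\rn\setminus B(x,r)}\frac{|f(y)|}{|x-y|^{n-\beta}}\,dy
=:\mathrm{J}_1(x,r)+\mathrm{J}_2(x,r),
\end{align*}
and bound each piece separately.

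For $\mathrm{J}_1(x,r)$, I would decompose $B(x,r)$ into the dyadic annuli $\{B(x,2^{-j}r)\setminus B(x,2^{-j-1}r)\}_{j\in\zz_+}$. On the $j$th annulus, $|x-y|^{-(n-\beta)}\sim(2^{-j}r)^{-(n-\beta)}$, and the integral of $|f|$ over $B(x,2^{-j}r)$ is controlled by $(2^{-j}r)^n\mathcal{M}(f)(x)$. Summing the resulting geometric series in $\beta>0$ yields
\begin{align*}
\mathrm{J}_1(x,r)\lesssim r^{\beta}\mathcal{M}(f)(x).
\end{align*}
For $\mathrm{J}_2(x,r)$, I apply the H\"older inequality with exponents $a$ and $a'$. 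The key observation is that the integral $\int_{\rn\setminus B(x,r)}|x-y|^{-(n-\beta)a'}\,dy$ converges precisely because $a\in(1,\frac{n}{\beta})$ forces $(n-\beta)a'>n$; computing in polar coordinates gives this integral equal to a constant multiple of $r^{n-(n-\beta)a'}$. Therefore
\begin{align*}
\mathrm{J}_2(x,r)\lesssim \|f\|_{L^a(\rn)}\,r^{\beta-\frac{n}{a}}.
\end{align*}

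Choosing $r$ so that the two bounds balance, namely $r:=[\|f\|_{L^a(\rn)}/\mathcal{M}(f)(x)]^{a/n}$ (assuming $\mathcal{M}(f)(x)\in(0,\fz)$; the degenerate cases are trivial), I obtain the Hedberg pointwise estimate
\begin{align*}
|I_\beta(f)(x)|\lesssim [\mathcal{M}(f)(x)]^{1-\frac{a\beta}{n}}\,\|f\|_{L^a(\rn)}^{\frac{a\beta}{n}}.
\end{align*}
Raising to the power $\widetilde{a}$ and using the identity $\widetilde{a}(1-\frac{a\beta}{n})=a$ (which follows from $\frac{1}{\widetilde{a}}=\frac{1}{a}-\frac{\beta}{n}$), then integrating over $\rn$, gives
\begin{align*}
\|I_\beta(f)\|_{L^{\widetilde{a}}(\rn)}^{\widetilde{a}}
\lesssim \|\mathcal{M}(f)\|_{L^a(\rn)}^{a}\,\|f\|_{L^a(\rn)}^{\widetilde{a}-a}.
\end{align*}
Finally, since $a\in(1,\fz)$, the classical Hardy--Littlewood maximal inequality $\|\mathcal{M}(f)\|_{L^a(\rn)}\lesssim\|f\|_{L^a(\rn)}$ applies, and the desired bound follows. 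The only real obstacle is keeping the bookkeeping of the exponents $a$, $\widetilde{a}$, $a\beta/n$ straight; the analytic ingredients (dyadic decomposition, H\"older, and the $L^a$-boundedness of $\mathcal{M}$) are all standard, and the restriction $a<\frac{n}{\beta}$ is forced exactly by the convergence of the tail integral in $\mathrm{J}_2$.
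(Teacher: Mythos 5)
Your proof is correct and complete. Note, however, that the paper itself does not prove Lemma \ref{fractional}: it simply cites it as well known, referring to \cite[p.\,119]{EMS1970}, so there is nothing to compare against. That said, the Hedberg argument you give is one of the standard proofs of the Hardy--Littlewood--Sobolev inequality, and every step checks out: the dyadic-annuli bound $\mathrm{J}_1(x,r)\lesssim r^\beta\mathcal{M}(f)(x)$ uses only $\beta>0$; the tail bound via H\"older needs $(n-\beta)a'>n$, which is exactly equivalent to $a<\frac{n}{\beta}$; optimizing $r$ gives the pointwise estimate $|I_\beta(f)(x)|\lesssim[\mathcal{M}(f)(x)]^{a/\widetilde{a}}\|f\|_{L^a(\rn)}^{1-a/\widetilde{a}}$, where $\widetilde{a}(1-\frac{a\beta}{n})=a$ follows directly from $\frac{1}{\widetilde{a}}=\frac{1}{a}-\frac{\beta}{n}$; and integrating and invoking the $L^a$-boundedness of $\mathcal{M}$ (valid since $a>1$) closes the argument. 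The treatment of the degenerate cases $\mathcal{M}(f)(x)\in\{0,\infty\}$ and $\|f\|_{L^a(\rn)}=0$ is indeed trivial. One could alternatively prove the lemma by real interpolation from the endpoint weak-type estimate, as is also classical, but your self-contained pointwise approach is cleaner for this purpose and matches what the paper silently assumes.
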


The following lemma plays an important role in the proofs
of both Theorems \ref{g-l-bounded3} and \ref{g-l-bounded4}.

\begin{lemma}\label{g-lam-01}
Let $q\in(1,\fz)$, $\dz\in(0,1]$, $\lambda\in(1,\fz)$,
$g_\lambda^*$ be the Littlewood--Paley $g_{\lambda}^*$-function in \eqref{g-4},
and $f\in L^1_{\mathrm{loc}}(\rn)$. Then the following two statements hold true:
\begin{enumerate}
\item[\rm (i)]
if $\lambda\in(\max\{1,\frac{2}{q}\},\infty)$ and
$g_{\lambda}^*(f)(x_0)<\fz$ for one $x_0\in\rn$,
then there exists a positive constant $C$, independent of both $f$ and $x_0$, such that,
for any ball $B\in \mathbb{B}(\rn)$ containing $x_0$,
\begin{align*}
&\lf[\fint_{B}\lf\{g_\lambda^*(f)(x)-\inf_{\widetilde{x}\in B}
g_\lambda^*(f)(\widetilde{x})\r\}^q\,dx\r]^{\frac{1}{q}}
\leq C\sum_{k=1}^{\infty}\frac{1}{2^{\frac{k\delta}{2}}}
\lf[\fint_{2^{k}B}\lf|f(z)-f_{2^{k}B}\r|^q\,dz\r]^{\frac{1}{q}};
\end{align*}		

\item[\rm (ii)]
if $\lambda\in(\max\{1,\frac{2}{q}\}+\frac{2}{n},\infty)\cap (2,\fz)$ and
$g_{\lambda}^*(f)(x_0)<\fz$ for one $x_0\in\rn$,
then there exists a positive constant $C$, independent of both $f$ and $x_0$, such that,
for any ball $B\in \mathbb{B}(\rn)$ containing $x_0$,
\begin{align*}
&\lf[\fint_{B}\lf\{g_\lambda^*(f)(x)-\inf_{\widetilde{x}\in B}
g_\lambda^*(f)(\widetilde{x})\r\}^q\,dx\r]^{\frac{1}{q}}\\
&\quad\leq C\sum_{k=1}^{\infty}\lf[\frac{1}{2^{k\dz}}
+\frac{1}{2^{\frac{kn(\lambda-2)}{2}}}\r]
\lf[\fint_{2^{k}B}\lf|f(z)-f_{2^{k}B}\r|^q\,dz\r]^{\frac{1}{q}}.
\end{align*}
\end{enumerate}		
\end{lemma}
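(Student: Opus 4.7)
The plan is to follow the same three-step splitting scheme that proved Lemmas \ref{lem-g-fz} and \ref{lem-s-fz}, adapted to the Poisson-like weight $(\frac{t}{t+|x-\xi|})^{\lambda n}$ of $g_\lambda^*$, and then to treat the two ranges of $\lambda$ through two different pointwise bounds on the far-field contribution. Concretely, write $B=B(y,r)$ and split $g_\lambda^*(f)=[(g_{\lambda,r}^*(f))^2+(g_{\lambda,\infty}^*(f))^2]^{\frac12}$, where $g_{\lambda,r}^*$ integrates $t$ over $(0,4r)$ and $g_{\lambda,\infty}^*$ over $(4r,\fz)$. Using $g_\lambda^*(f)\ge0$ and $g_\lambda^*(f)(x_0)<\fz$, for any $x\in B$ bound
\begin{align*}
0\le g_\lambda^*(f)(x)-\inf_{\wz x\in B}g_\lambda^*(f)(\wz x)
\le g_{\lambda,r}^*(f)(x)+\sup_{\{\wz x\in B:\ g_{\lambda,\infty}^*(f)(\wz x)<\fz\}}
\lf|g_{\lambda,\infty}^*(f)(x)-g_{\lambda,\infty}^*(f)(\wz x)\r|.
\end{align*}
By the cancellation \eqref{g-1} I replace $f$ by $f-f_{8B}$ and split it further as $(f-f_{8B})\mathbf{1}_{8B}+(f-f_{8B})\mathbf{1}_{\rn\setminus8B}$, producing three terms to estimate, exactly parallel to $\mathrm{G}_1,\mathrm{G}_2,\mathrm{G}_3$ in the proof of Lemma \ref{lem-g-fz}.

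For the near term $g_{\lambda,r}^*((f-f_{8B})\mathbf{1}_{8B})$ on $L^q(B)$, I invoke Lemma \ref{Lam-Lp}, whose hypothesis $\lambda>\max\{1,\frac{2}{q}\}$ is built into both cases of the present lemma, giving directly a bound by $[\fint_{8B}|f-f_{8B}|^q\,dx]^{\frac1q}$. For the far term, \eqref{g-2} yields the pointwise estimate
\begin{align*}
\lf|(f-f_{8B})\mathbf{1}_{\rn\setminus8B}\ast\varphi_t(\xi)\r|
\ls\int_{\rn\setminus8B}\frac{t^\dz|f(z)-f_{8B}|}{(t+|\xi-z|)^{n+\dz}}\,dz,
\qquad t\in(0,4r).
\end{align*}
Under hypothesis (i), I absorb the full weight $(\frac{t}{t+|x-\xi|})^{\lambda n}$ into the $\xi$-integration and apply Minkowski; the condition $\lambda>\max\{1,\frac{2}{q}\}$ just makes the $\xi$-integral converge, but forces me to divide the decay $t^\dz$ between controlling the $\xi$-integral and closing the $t$-integral, so I retain only $t^{\dz/2}$ at each step and obtain, via Lemma \ref{I-JN}, the factor $2^{-k\dz/2}$. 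Under hypothesis (ii), with $\lambda>2$ and $\lambda>\max\{1,\frac{2}{q}\}+\frac{2}{n}$, I factor the weight as $(\frac{t}{t+|x-\xi|})^{2n}\cdot(\frac{t}{t+|x-\xi|})^{(\lambda-2)n}$: the $2n$-piece mimics the Lusin-area setting and recovers the full $\dz$ decay as in Lemma \ref{lem-s-fz}, while the surplus $(\lambda-2)n$-piece is handled by an $L^a\to L^{\wz a}$ fractional integral estimate (Lemma \ref{fractional}) applied to the inner $z$-integral, producing the extra geometric factor $2^{-kn(\lambda-2)/2}$.

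For the high-$t$ term, I would follow the scheme of \eqref{e3-0}--\eqref{g10}: apply the reverse triangle inequality for the weighted $L^2\lf((4r,\fz)\times\rn;(\frac{t}{t+|\xi|})^{\lambda n}\frac{d\xi\,dt}{t^{n+1}}\r)$-norm to reduce to the pointwise difference $|f\ast\varphi_t(x+\eta)-f\ast\varphi_t(\wz x+\eta)|$, which, after subtracting $f_{8B}$ and separating near and far, is controlled by the mean value theorem together with \eqref{g-3'} on the far part and by the crude bound $|\varphi_t|\ls t^{-n}$ on the near part; the gain of the factor $|x-\wz x|\le2r$ lets the $t$-integral over $(4r,\fz)$ converge and matches the same $k$-decay as the low-$t$ term in each case. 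Lemma \ref{I-JN} then converts the remaining $\rn\setminus8B$-integrals into sums over $2^kB$, and H\"older finishes. The main obstacle is the far-part estimate in case (ii): balancing the fractional integral Lemma \ref{fractional} against the decomposition of the weight $(\frac{t}{t+|x-\xi|})^{\lambda n}$ so as to simultaneously extract the full $\dz$-decay and produce the $2^{-kn(\lambda-2)/2}$ term, while keeping all exchanges of integration order legitimate, is what dictates the precise thresholds $\lambda>2$ and $\lambda>\max\{1,\frac{2}{q}\}+\frac{2}{n}$.
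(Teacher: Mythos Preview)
Your plan has a genuine gap in part (i). Splitting only in the $t$-variable, with $\xi$ ranging over all of $\rn$, fails when $\lambda\le2$: in the low-$t$ far term $g_{\lambda,r}^*((f-f_{8B})\mathbf{1}_{\rn\setminus8B})$, the region where $\xi$ is far from $B$ (say $|\xi-y|\ge4r$) but $t<4r$ contributes, after the pointwise bound from \eqref{g-2} and Minkowski in $z$, a piece containing the factor $\int_0^{4r}t^{(\lambda-2)n-1}\,dt$, which diverges unless $\lambda>2$. Your phrase ``divide the decay $t^\dz$'' cannot repair this: the divergence lives in the $\xi$-far region where the $t^\dz$ gain from $\varphi$ is already fully absorbed by the kernel $(t+|\xi-z|)^{-(n+\dz)}$. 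This is precisely why the paper's proof of part (ii), which \emph{does} use the pure $t$-split $\widetilde{J}(0)$, needs an extra term $\widetilde{\Lambda}_3$ and the hypothesis $\lambda>2$ to control it.

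The paper's remedy for part (i) involves two ideas you are missing. First, the ``near'' region is taken to be the box $J(y,0)=\{(\xi,t):|\xi-y|<4r,\ 0<t<4r\}$ in \emph{both} variables, so that on $J(y,0)$ one always has $|\xi-z|\sim|y-z|$ for $z\notin8B$ and the far-$z$ estimate $\Lambda_2$ goes through as for the $g$-function. Second, and crucially, on the complement $\rr_+^{n+1}\setminus J(y,0)$ the mean value theorem is applied not to $\varphi_t$ but to the weight $x\mapsto(\tfrac{t}{t+|x-\xi|})^{\lambda n}$, yielding an extra factor $\tfrac{r}{t+|y-\xi|}$; after a dyadic decomposition into shells $J(y,k)\setminus J(y,k-1)$ and a split of the $z$-integral at $2^{k+3}B$, the near-$z$ piece $g_{\lambda,y,\infty}^{*,1}$ is then controlled via Lemma~\ref{fractional} with $\beta=\tfrac{n(\lambda-1)}{2}$ and $a=\tfrac{2}{\lambda}$, and it is the far-$z$ piece $g_{\lambda,y,\infty}^{*,2}$ that produces the weaker $2^{-k\dz/2}$ decay. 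Your outline never invokes Lemma~\ref{fractional} in part (i), and the change-of-variables reverse-triangle argument you propose for the high-$t$ term, while correct for part (ii), itself requires $\lambda>\max\{1,\tfrac2q\}+\tfrac2n$ to close (see the paper's $\widetilde{\Lambda}_4^{(1)}$) and so cannot cover the full range in (i).
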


\begin{proof}
Let $q$, $\dz$, $\lambda$, $g_\lambda^*$, and $f$ be the same as in the present lemma.
We first show (i). We do this via borrowing some ideas from the proof of \cite[Theorem 1.3]{MY}.
Assume that there exists
one $x_0\in\rn$ such that $g_{\lambda}^*(f)(x_0)<\fz$. Let $B:=B(y,r)$ with $y\in\rn$
and $r\in(0,\fz)$ be such that $x_0\in B$ and, for any $k\in\zz_+$, let
\begin{align}\label{j(k,y)}
J(y,k):=\lf\{(\xi,t)\in\rr_+^{n+1}
:\ |\xi-y|<2^{k+2}r\text{ and }0<t<2^{k+2}r\r\}.
\end{align}
Moreover, for any $x,y\in\rn$, let
\begin{align}\label{g*y0}
g_{\lambda,y,0}^{*}(f)(x)
:=\lf[\iint_{J(y,0)}\lf(\frac{t}{t+|x-\xi|}\r)^{\lambda n}
\lf|f\ast \varphi_t(\xi)\r|^2\,\frac{d\xi\,dt}{t^{n+1}}\r]^{\frac{1}{2}}
\end{align}
and
\begin{align*}
g_{\lambda,y,\infty}^{*}(f)(x):=\lf[\iint_{\rr_+^{n+1}
\setminus J(y,0)}\lf(\frac{t}{t+|x-\xi|}\r)^{\lambda n}
\lf|f\ast \varphi_t(\xi)\r|^2\,\frac{d\xi\,dt}{t^{n+1}}\r]^{\frac{1}{2}}.
\end{align*}
Using \eqref{g-1} and $g_{\lambda}^*(f)(x_0)<\fz$
[and hence $g_{\lambda,y,\fz}^*(f)(x_0)<\fz$], we conclude that, for any $x\in B$,
\begin{align*}
0&\leq g_\lambda^*(f)(x)-\inf_{\widetilde{x}\in B}
g_\lambda^*(f)(\widetilde{x})\\
&\leq g_{\lambda,y,0}^{*}(f)(x)+
g_{\lambda,y,\infty}^{*}(f)(x)-
\inf_{\wz x\in B}g_{\lambda,y,\fz}^*(f)(\wz x)\\
&\leq g_{\lambda,y,0}^{*}(f-f_{B})(x)
+\sup_{\{\wz x\in B:\ g_{\lambda,y,\fz}^*(\wz x)<\fz\}}
\lf|g_{\lambda,y,\infty}^{*}(f)(x)
-g_{\lambda,y,\infty}^{*}(f)(\wz x)\r|\\
&\leq g_{\lambda,y,0}^{*}\lf([f-f_{B}]\mathbf{1}_{B}\r)(x)
+g_{\lambda,y,0}^{*}\lf([f-f_{B}]\mathbf{1}_{\rn\setminus B}\r)(x)\\
&\quad+\sup_{\{\wz x\in B:\ g_{\lambda,y,\fz}^*(\wz x)<\fz\}}
\lf|g_{\lambda,y,\infty}^{*}(f)(x)
-g_{\lambda,y,\infty}^{*}(f)(\wz x)\r|
\end{align*}
and hence
\begin{align}\label{g-lam-00}
&\lf[\fint_{B}\lf\{g_\lambda^*(f)(x)
-\inf_{\widetilde{x}\in B}g_\lambda^*(f)(\widetilde{x})
\r\}^q\,dx\r]^{\frac{1}{q}}\\
&\quad\leq\lf[\fint_{B}\lf|g_{\lambda,y,0}^{*}\lf([f-f_{B}]
\mathbf{1}_{B}\r)(x)\r|^q\,dx\r]^{\frac{1}{q}}\noz\\
&\qquad+\lf[\fint_{B}\lf|g_{\lambda,y,0}^{*}\lf([f-f_{B}]
\mathbf{1}_{\rn\setminus B}\r)(x)\r|^q\,dx\r]^{\frac{1}{q}}\noz\\
&\qquad+\lf\{\fint_{B}\lf[\sup_{\{\wz x\in B:\ g_{\lambda,y,\fz}^*(f)(\wz x)<\fz\}}
\lf|g_{\lambda,y,\infty}^{*}(f)(x)
-g_{\lambda,y,\infty}^{*}(f)(x_0)\r|\r]^q\,dx
\r\}^{\frac{1}{q}}\noz\\
&\quad=:\mathrm{\Lambda}_1+\mathrm{\Lambda}_2+\mathrm{\Lambda}_3.\noz
\end{align}

We first estimate $\mathrm{\Lambda}_1$.
Indeed, by $\lambda\in (\max\{1,\frac{2}{q}\},\infty)$
and Lemma \ref{Lam-Lp}, we find that
\begin{align}\label{g-lam-000}
\mathrm{\Lambda}_1
&\leq\lf[\frac{1}{|B|}
\int_{\rn}\lf|g_{\lambda}^{*}\lf([f-f_{B}]
\mathbf{1}_{8B}\r)(x)\r|^q\,dx\r]^{\frac{1}{q}}\\
&\ls\lf[\fint_{8B}\lf|f(x)-f_{B}\r|^q\,dx\r]^{\frac{1}{q}}
\ls\lf[\fint_{8B}\lf|f(x)-f_{8B}\r|^q\,dx\r]^{\frac{1}{q}}.\noz
\end{align}
This is a desired estimate of $\mathrm{\Lambda}_1$.

Now, we estimate $\mathrm{\Lambda}_2$. Indeed, it is easy to prove that,
for any $(\xi,t)\in J(y,0)$, $x\in B$, and $z\in \rn\setminus8B$,
$|\xi-z|\sim|z-y|$ and $4B(y,r)\subset 8B(x,r)$.
From this, \eqref{g-2}, and Lemma \ref{I-JN},
we deduce that, for any $x\in B$,
\begin{align*}
&g_{\lambda,y,0}^*\lf([f-f_{B}]\mathbf{1}_{\rn\setminus8B}\r)(x)\\
&\quad=\lf\{\int_{0}^{4r}\int_{|\xi-y|<4r}
\lf(\frac{t}{t+|x-\xi|}\r)^{\lambda n}
\lf[\int_{\rn\setminus8B}
\frac{|f(z)-f_{B}|}{t^n}\varphi\lf(\frac{\xi-z}{t}\r)\,dz\r]^2
\,\frac{d\xi\,dt}{t^{n+1}}\r\}^{\frac{1}{2}}\\
&\quad\lesssim\lf\{\int_{0}^{4r}\int_{|\xi-y|<4r}
\lf(\frac{t}{t+|x-\xi|}\r)^{\lambda n}
\lf[\int_{\rn\setminus8B}
\frac{|f(z)-f_{B}|}{(t+|\xi-z|)^{n+\delta}}\,dz\r]^2
\,\frac{d\xi\,dt}{t^{n+1-2\delta}}\r\}^{\frac{1}{2}}\\
&\quad\ls\lf\{\int_{0}^{4r}
\int_{|\xi-y|<4r}\lf(\frac{t}{t+|x-\xi|}\r)^{\lambda n}
\lf[\int_{\rn\setminus 8B}
\frac{|f(z)-f_{B}|}{|z-y|^{n+\delta}}\,dz\r]^2
\,\frac{d\xi\,dt}{t^{n+1-2\delta}}\r\}^{\frac{1}{2}}\\
&\quad\lesssim\lf[\sum_{k=3}^{\infty}\frac{1}{2^{k\delta}}
\fint_{2^{k+1}B}\lf|f(z)-f_{B}\r|\,dz\r]
\lf[\int_{0}^{4r}\int_{|\xi-x|<8r}\lf(\frac{t}{t+|x-\xi|}\r)^{\lambda n}
\frac{1}{r^{2\delta}}\,\frac{d\xi\,dt}{t^{n+1-2\delta}}\r]^{\frac{1}{2}},
\end{align*}
and, moreover, using the change of variable
$s:=\frac{\xi-x}{t}$ and the fact that $\lambda>1$, we have
\begin{align*}
&\lf[\int_{0}^{4r}\int_{|\xi-y|<8r}\lf(\frac{t}{t+|x-\xi|}\r)^{\lambda n}
\frac{1}{r^{2\delta}}\,\frac{d\xi\,dt}{t^{n+1-2\delta}}\r]^{\frac{1}{2}}\\
&\quad=\lf[\int_{0}^{4r}\int_{|s-\frac{y-x}{t}|<\frac{8r}{t}}
\lf(\frac{t}{t+t|s|}\r)^{\lambda n}
\frac{t^{2\delta-1}}{r^{2\delta}}\,ds\,dt\r]^{\frac{1}{2}}\\
&\quad\leq\lf[\int_{0}^{4r}\int_{\rn}
\frac{t^{2\delta-1}}{(1+|s|)^{\lambda n}r^{2\delta}}\,ds\,dt\r]^{\frac{1}{2}}
\quad\sim\lf(\int_0^{4r}
\frac{t^{2\delta-1}}{r^{2\delta}}\,dt\r)^{\frac{1}{2}}
\sim 1.
\end{align*}
By this and Lemma \ref{sum-g} with $\theta:=2^{-\dz}$,
we conclude that, for any $x\in B$,
\begin{align*}
g_{\lambda,y,0}^*\lf(\lf[f-f_{B}\r]\mathbf{1}_{8B}\r)(x)
\lesssim\sum_{k=3}^{\infty}\frac{1}{2^{k\delta}}\fint_{2^{k+1}B}
\lf|f(z)-f_{2^{k+1}B}\r|\,dz
\end{align*}
and hence
\begin{align}\label{g-lam-001}
\mathrm{\Lambda}_2
\lesssim\sum_{k=3}^{\infty}\frac{1}{2^{k\delta}}\fint_{2^{k+1}B}
\lf|f(z)-f_{2^{k+1}B}\r|\,dz.
\end{align}
This is a desired estimate of $\mathrm{\Lambda}_2$.

Next, we estimate $\mathrm{\Lambda}_3$. To this end,
we first estimate $|g_{\lambda,y,\infty}^*(f)(x)
-g_{\lambda,y,\infty}^*(f)(\wz x)|$ for any $x\in B(y,r)$ and $\wz x\in B$
such that $g_{\lambda,y,\fz}^*(f)(\wz x)<\fz$.
Indeed, it is easy to show that, for any $x,\wz x\in B(y,r)$ and
$(\xi,t)\in \rn\setminus J(y,0)$,
$$
t+|x-\xi|\sim t+|y-\xi|\sim t+|\wz x-\xi|.
$$
From this, the definition of $g_{\lambda,y,\infty}^*(f)$,
the mean value theorem, \eqref{g-1}, and \eqref{g-2}, we deduce that,
for any $x\in B$ and $\wz x\in B$
such that $g_{\lambda,y,\fz}^*(f)(\wz x)<\fz$, there exists a $\theta\in(0,1)$ such that
\begin{align}\label{g-lam-002}
&\lf|g_{\lambda,y,\infty}^*(f)(x)
-g_{\lambda,y,\infty}^*(f)(\wz x)\r|^2\\
&\quad\leq\iint_{\rr_+^{n+1}
\setminus J(y,0)}\lf|\lf(\frac{t}{t+|x-\xi|}\r)^{\lambda n}
-\lf(\frac{t}{t+|\wz x-\xi|}\r)^{\lambda n}\r|\lf|f\ast
\varphi_t(\xi)\r|^2\,\frac{d\xi\,dt}{t^{n+1}}\noz\\
&\quad=\iint_{\rr_+^{n+1}\setminus J(y,0)}
\frac{||x-\xi|-|\wz x-\xi||}
{(t+\theta|x-\xi|+(1-\theta)|\wz x-\xi|)^{\lambda n+1}}
\lf|f\ast \varphi_t(\xi)\r|^2\,\frac{d\xi\,dt}{t^{n+1-\lambda n}}\noz\\
&\quad\ls\iint_{\rr_+^{n+1}\setminus J(y,0)}
\frac{r}{(t+|y-\xi|)^{\lambda n+1}}
\lf[\int_{\rn}\lf|f(z)-f_{B}\r|
\frac{1}{t^n}\varphi\lf(\frac{\xi-z}{t}\r)\,dz\r]^2
\,\frac{d\xi\,dt}{t^{n+1-\lambda n}}\noz\\
&\quad\ls
r\sum_{k=1}^{\infty}\iint_{J(y,k)\setminus J(y,k-1)}
\lf(\frac{t}{2^{k+2}r}\r)^{\lambda n+1}
\lf[\int_{2^{k+3}B}\frac{|f(z)-f_{B}|}{(t+|\xi-z|)^{n+\delta}}\,dz\r]^2
\,\frac{d\xi\,dt}{t^{n-2\delta+2}}\noz\\
&\quad\quad +r\sum_{k=1}^{\infty}\iint_{J(y,k)\setminus J(y,k-1)}
\lf(\frac{t}{2^{k+2}r}\r)^{\lambda n+1}
\lf[\int_{\rn\setminus2^{k+3}B}\frac{|f(z)-f_{B}|}{(t+|\xi-z|)^{n+\delta}}\,dz\r]^2
\,\frac{d\xi\,dt}{t^{n-2\delta+2}}\noz\\
&\quad=:g_{\lambda,y,\infty}^{*,1}(x)
+g_{\lambda,y,\infty}^{*,2}(x).\noz
\end{align}

Now, we estimate $g_{\lambda,y,\infty}^{*,1}(x)$ for any $x\in B$.
Notice that, for any $k\in\nn$, $y\in\rn$,
and $(\xi,t)\in J(y,k)$, $\frac{t}{2^{k+2}r}\in(0,1)$,
which implies that, for any $\lambda_1,\lambda_2\in(1,\infty)$
satisfying $\lambda_1>\lambda_2$,
$(\frac{t}{2^{k+2}r})^{\lambda_1 n+1}\leq(\frac{t}{2^{k+2}r})^{\lambda_2 n+1}$.
Thus, in the following estimation for
$g_{\lambda,y,\infty}^{*,1}(x,\widetilde{x})$,
we may always assume that
$\lambda\in(\max\{1,\frac{2}{q}\},2)$ because $q\in(1,\fz)$.
From this, the Minkowski inequality,
Lemma \ref{fractional}(ii) with $\beta:=\frac{n(\lambda-1)}{2}\in(0,n)$,
$a:=\frac{2}{\lambda}\in(1,q)\cap(1,\frac{n}{\beta})$, and $\widetilde{a}:=2$,
Lemma \ref{sum-g} with $\theta:=\frac{1}{2}$, and the H\"older inequality,
we deduce that, for any $x\in B$,
\begin{align}\label{g-2-01}
g_{\lambda,y,\infty}^{*,1}(x)
&\ls r\sum_{k=1}^{\infty}\int_{|\xi-y|<2^{k+2}r}\int_{0}^{2^{k+2}r}
\lf(\frac{t}{2^{k+2}r}\r)^{\lambda n+1}
\lf[\int_{2^{k+3}B}\frac{|f(z)-f_{B}|}{(t+|\xi-z|)^{n+\delta}}\,dz\r]^2
\,\frac{d\xi\,dt}{t^{n-2\delta+2}}\\
&\lesssim\sum_{k=1}^{\infty}\frac{r}{(2^kr)^{\lambda n+1}}
\int_{|\xi-y|<2^{k+2}r}\lf\{\int_{2^{k+3}B}
\lf|f(z)-f_{B}\r|\r.\noz\\
&\quad\times\lf.\lf[\int_{0}^{2^{k+2}r}
\frac{t^{\lambda n-n-1+2\delta}}{(t+|\xi-z|)^{2n+2\delta}}
\,dt\r]^{\frac{1}{2}}\,dz\r\}^2\,d\xi\noz\\
&\lesssim\sum_{k=1}^{\infty}\frac{r}{(2^kr)^{\lambda n+1}}
\int_{|\xi-y|<2^{k+2}r}\lf\{\int_{2^{k+3}B}
\lf|f(z)-f_{B}\r|\r.\noz\\
&\quad\times\lf.\lf[\int_{0}^{|\xi-z|}
\frac{t^{\lambda n-n-1+2\delta}}{(t+|\xi-z|)^{2n+2\delta}}\,dt
+\int_{|\xi-z|}^{\infty}\frac{t^{\lambda n-n-1+2\delta}}
{(t+|\xi-z|)^{2n+2\delta}}\,dt\r]^{\frac{1}{2}}\,dz\r\}^2\,d\xi\noz\\
&\lesssim\sum_{k=1}^{\infty}\frac{r}{(2^kr)^{\lambda n+1}}
\int_{\rn}\lf[\int_{2^{k+3}B}
\frac{|f(z)-f_{B}|}{|\xi-z|^{n-\frac{n}{2}(\lambda-1)}}\,dz\r]^2\,d\xi\noz\\
&\lesssim\sum_{k=1}^{\infty}\frac{r}{(2^kr)^{\lambda n+1}}
\lf[\int_{2^{k+3}B}\lf|f(z)-f_{B}\r|^{\frac{2}{\lambda}}\,dz\r]^\lambda\noz\\
&\ls\lf\{\sum_{k=1}^{\infty}\frac{1}{2^\frac{k}{2}}\lf[\fint_{2^{k+3}B}
\lf|f(z)-f_{2^{k+3}B}
\r|^{\frac{2}{\lambda}}\,dz\r]^{\frac{\lambda}{2}}\r\}^2\noz\\
&\ls\lf\{\sum_{k=1}^{\infty}\frac{1}{2^\frac{k}{2}}\lf[\fint_{2^{k+3}B}
\lf|f(z)-f_{2^{k+3}B}\r|^{q}\,dz\r]^{\frac{1}{q}}\r\}^2.\noz
\end{align}
This is a desired estimate of $g_{\lambda,y,\infty}^{*,1}(x)$.

Next, we estimate $g_{\lambda,y,\infty}^{*,2}(x)$ for any $x\in B$.
Indeed, notice that, for any $z\in \rn\setminus2^{k+3}B$
and $(\xi,t)\in J(y,k)\setminus J(y,k-1)$,
$$|\xi-z|\sim|y-z|.$$
Using this and the Fubini theorem, we conclude that, for any $x\in B$,
\begin{align*}
&g_{\lambda,y,\infty}^{*,2}(x)\\
&\quad\sim\sum_{k=1}^{\infty}\frac{r}{(2^{k}r)^{\lambda n+1}}
\iint_{J(y,k)\setminus J(y,k-1)}
\lf[\sum_{l=k+3}^{\infty}\int_{2^{l+1}B\setminus2^{l}B}
\frac{|f(z)-f_{B}|}{(t+|y-z|)^{n+\delta}}\,dz\r]^2
\,\frac{d\xi\,dt}{t^{n-\lambda n-2\delta+1}}\\
&\quad\ls\sum_{k=1}^{\infty}\frac{r}{(2^{k}r)^{\lambda n+1}}
\iint_{J(y,k)}\lf[\sum_{l=k+3}^{\infty}\int_{2^{l+1}B}\frac{|f(z)
-f_{B}|}{(2^lr)^{n+\delta}}\,dz\r]^2
\,\frac{d\xi\,dt}{t^{n-\lambda n-2\delta+1}}\\
&\quad\sim\sum_{k=1}^{\infty}
\frac{r}{(2^{k}r)^{\lambda n+1}}\iint_{J(y,k)}
\frac{1}{r^{2\delta}}\lf[\sum_{l=k+3}^{\infty}\frac{1}{2^{l\delta}}
\fint_{2^{{l+1}}B}\lf|f(z)-f_{B}\r|\,dz\r]^2
\,\frac{d\xi\,dt}{t^{n-\lambda n-2\delta+1}}\\
&\quad\ls\lf\{\sum_{k=1}^{\infty}
\sum_{l=k+3}^{\infty}\lf[\frac{1}{2^{l\delta}}
\fint_{2^{l+1}B}\lf|f(z)-f_{B}\r|\,dz\r]
\lf[\frac{r^{1-2\dz}}{(2^kr)^{\lambda n+1}}
\int_{0}^{2^{k+2}r}\int_{|\xi-y|<2^{k+2}r}
\,\frac{d\xi\,dt}{t^{n-\lambda n-2\delta+1}}\r]^{\frac{1}{2}}\r\}^2\\
&\quad\lesssim\lf[\sum_{l=4}^{\infty}\sum_{k=1}^{l-3}
\frac{1}{2^{l\delta}}\fint_{2^{l+1}B}
\lf|f(z)-f_{B}\r|\,dz\,2^{\frac{k(2\delta-1)}{2}}\r]^{2}
\ls\lf[\sum_{l=4}^{\infty}
\frac{1}{2^{\frac{l\dz}{2}}}\fint_{2^{l+1}B}
\lf|f(z)-f_{B}\r|\,dz\r]^{2},
\end{align*}
where, in the last step, we used the fact that
\begin{align*}
\frac{1}{2^{l\delta}}\sum_{k=1}^{l-3}2^{\frac{k(2\delta-1)}{2}}
&\ls
\begin{cases}
\displaystyle
\frac{1}{2^{\frac{l}{2}}},
& \dz\in(\frac{1}{2},1),\\
\displaystyle
\frac{l}{2^{l\dz}}, & \dz\in(0,\frac{1}{2}]
\end{cases}
\ls\frac{1}{2^{\frac{l\dz}{2}}}.
\end{align*}
From this and Lemma \ref{sum-g} with
$\theta:=2^{-\frac{\dz}{2}}$, we deduce that
\begin{align}\label{Lam-3-1}
g_{\lambda,y,\infty}^{*,2}(x)\lesssim
\lf\{\sum_{l=4}^{\infty}\frac{1}{2^{\frac{l\dz}{2}}}
\lf[\fint_{2^{l}B}\lf|f(z)-f_{2^{l}B}\r|^q\,dz\r]^{\frac{1}{q}}\r\}^2.
\end{align}
This is a desired estimate of $g_{\lambda,y,\infty}^{*,2}(x)$.

Moreover, by \eqref{g-lam-002}, \eqref{g-2-01}, and \eqref{Lam-3-1}, we find that
\begin{align*}
\mathrm{\Lambda}_3\ls\sum_{k=1}^{\infty}\frac{1}{2^\frac{k}{2}}\lf[\fint_{2^{k+3}B}
\lf|f(z)-f_{2^{k+3}B}\r|^{q}\,dz\r]^{\frac{1}{q}},
\end{align*}
which, combined with \eqref{g-lam-00}, \eqref{g-lam-000},
\eqref{g-lam-001}, and $\dz\in(0,1]$, further implies that
\begin{align*}
\lf[\fint_{B}\lf\{g_\lambda^*(f)(x)-\inf_{\widetilde{x}\in B}
g_\lambda^*(f)(\widetilde{x})\r\}^q\,dx\r]^{\frac{1}{q}}
\lesssim\sum_{k=1}^{\infty}\frac{1}{2^{\frac{k\delta}{2}}}
\lf[\fint_{2^{k+1}B}\lf|f(z)-f_{2^{k+1}B}\r|^q\,dz\r]^{\frac{1}{q}}.
\end{align*}
This finishes the proof of (i).

Now, we prove (ii). To this end, assume that there exists
one $x_0\in\rn$ such that $g_{\lambda}^*(f)(x_0)<\fz$
and a ball $B:=B(y,r)\subset \rn$ with $y\in\rn$
and $r\in(0,\fz)$ such that $x_0\in B$, and let
\begin{align*}
\widetilde{J}(0):=\lf\{(\xi,t)\in\rr_+^{n+1}:\ \xi\in\rn\text{ and }t\in(0,4r)\r\}.
\end{align*}
Moreover, for any $x\in\rn$, let
$g_{\lambda,y,0}^{*}(f)(x)$
be the same as in \eqref{g*y0},
$$g_{\lambda,0}^{*}(f)(x)
:=\lf[\iint_{\widetilde{J}(0)\setminus J(y,0)}\lf(\frac{t}{t+|x-\xi|}\r)^{\lambda n}
\lf|f\ast \varphi_t(\xi)\r|^2\,\frac{d\xi\,dt}{t^{n+1}}\r]^{\frac{1}{2}},$$
and
\begin{align}\label{def-g-lam}
g_{\lambda,\infty}^{*}(f)(x):=&\lf[\iint_{\rr_+^{n+1}
\setminus \widetilde{J}(0)}\lf(\frac{t}{t+|x-\xi|}\r)^{\lambda n}
\lf|f\ast \varphi_t(\xi)\r|^2\,\frac{d\xi\,dt}{t^{n+1}}\r]^{\frac{1}{2}}\\
=&\lf[\iint_{\rr_+^{n+1}\setminus \widetilde{J}(0)}\lf(\frac{t}{t+|\xi|}\r)^{\lambda n}
\lf|f\ast \varphi_t(x+\xi)\r|^2\,\frac{d\xi\,dt}{t^{n+1}}\r]^{\frac{1}{2}},\noz
\end{align}
where $J(y,0)$ is the same as in \eqref{j(k,y)} with $k:=0$.
Then, using \eqref{g-1}, we conclude that, for any $x\in B$,
\begin{align*}
0&\leq g_\lambda^*(f)(x)-\inf_{\widetilde{x}\in B}g_\lambda^*(f)(x)\\
&\leq g_{\lambda,y,0}^{*}(f)(x)
+g_{\lambda,0}^{*}(f)(x)+g_{\lambda,\infty}^{*}(f)(x)
-\inf_{\widetilde{x}\in B}g_{\lambda,\fz}^*(f)(x)\\
&\leq g_{\lambda,y,0}^{*}\lf(f-f_{B}\r)(x)
+g_{\lambda,0}^{*}\lf(f-f_{B}\r)(x)\\
&\quad+\sup_{\{\widetilde{x}\in B:\ g_{\lambda,\infty}^{*}(f)(\widetilde{x})<\fz\}}
\lf|g_{\lambda,\infty}^{*}(f)(x)
-g_{\lambda,\infty}^{*}(f)(\widetilde{x})\r|\\
&\leq g_{\lambda,y,0}^{*}\lf([f-f_{B}]\mathbf{1}_{B}\r)(x)
+g_{\lambda,y,0}^{*}\lf([f-f_{B}]\mathbf{1}_{\rn\setminus B}\r)(x)
+g_{\lambda,0}^{*}\lf(f-f_{B}\r)(x)\\
&\quad+\sup_{\{\widetilde{x}\in B:\ g_{\lambda,\infty}^{*}(f)(\widetilde{x})<\fz\}}
\lf|g_{\lambda,\infty}^{*}(f)(x)-g_{\lambda,\infty}^{*}(f)(\widetilde{x})\r|
\end{align*}
and hence
\begin{align}\label{g-lam-2-000}
&\lf[\fint_{B}\lf\{g_\lambda^*(f)(x)
-\inf_{\widetilde{x}\in B}g_\lambda^*(f)(\widetilde{x})
\r\}^q\,dx\r]^{\frac{1}{q}}\\
&\quad\leq\lf[\fint_{B}\lf|g_{\lambda,y,0}^{*}\lf([f-f_{B}]
\mathbf{1}_{B}\r)(x)\r|^q\,dx\r]^{\frac{1}{q}}\noz\\
&\quad\quad+\lf[\fint_{B}\lf|g_{\lambda,y,0}^{*}\lf([f-f_{B}]
\mathbf{1}_{\rn\setminus B}\r)(x)\r|^q\,dx\r]^{\frac{1}{q}}\noz\\
&\quad\quad+\lf[\fint_{B}\lf|g_{\lambda,0}^{*}
(f-f_{B})(x)\r|^q\,dx\r]^{\frac{1}{q}}\noz\\
&\quad\quad+\lf\{\fint_{B}\lf[\sup_{\{\widetilde{x}\in B:
\ g_{\lambda,\infty}^{*}(f)(\widetilde{x})<\fz\}}
\lf|g_{\lambda,\infty}^{*}(f)(x)
-g_{\lambda,\infty}^{*}(f)(\widetilde{x})\r|\r]^q\,dx\r\}^{\frac{1}{q}}\noz\\
&\quad=:\Lambda_1+\Lambda_2+\widetilde{\Lambda}_3+\widetilde{\Lambda}_4,\noz
\end{align}
where $\Lambda_1$ and $\Lambda_2$ are the same as in \eqref{g-lam-00}.
Moreover, by \eqref{g-lam-000} and \eqref{g-lam-001}, we find that
\begin{align}\label{g-lam-2-01}
\Lambda_1+\Lambda_2
\lesssim\sum_{k=1}^{\infty}\frac{1}{2^{k\delta}}\fint_{2^{k+1}B}
\lf|f(z)-f_{2^{k+1}B}\r|\,dz.
\end{align}

Next, we estimate $\widetilde{\Lambda}_3$. To this end, we first estimate
$g_{\lambda,0}^{*}(f-f_{B})(x)$ for any $x\in B$.
Indeed, by \eqref{g-2},
we find that, for any $x\in B$,
\begin{align}\label{Pi-2}
&g_{\lambda,0}^{*}\lf(f-f_{B}\r)(x)\\
&\quad=\lf\{\iint_{\widetilde{J}(0)\setminus J(y,0)}
\lf(\frac{t}{t+|x-\xi|}\r)^{\lambda n}
\lf|\lf(f-f_{B}\r)
\ast \varphi_t(\xi)\r|^2\,\frac{d\xi\,dt}{t^{n+1}}\r\}^{\frac{1}{2}}\noz\\
&\quad=\lf\{\int_{0}^{4r}\int_{\rn\setminus 4B}
\lf(\frac{t}{t+|x-\xi|}\r)^{\lambda n}
\lf|\int_{\rn}[f(z)-f_{B}] \varphi_t(\xi-z)\,dz\r|^2
\,\frac{d\xi\,dt}{t^{n+1}}\r\}^{\frac{1}{2}}\noz\\
&\quad\le\lf\{\sum_{k=1}^{\infty}
\int_{0}^{4r}\int_{2^{k+2}B\setminus2^{k+1}B}
\lf(\frac{t}{2^{k}r}\r)^{\lambda n}
\lf[\int_{2^{k+3}B}
\frac{|f(z)-f_{B}|}{(t+|\xi-z|)^{n+\dz}}\,dz\r]^2
\,\frac{d\xi\,dt}{t^{n+1-2\dz}}\r\}^{\frac{1}{2}}\noz\\
&\qquad+\lf\{\sum_{k=1}^{\infty}
\int_{0}^{4r}\int_{2^{k+2}B\setminus2^{k+1}B}
\lf(\frac{t}{2^{k}r}\r)^{\lambda n}
\lf[\int_{\rn\setminus2^{k+3}B}
\frac{|f(z)-f_{B}|}{(t+|\xi-z|)^{n+\dz}}\,dz\r]^2
\,\frac{d\xi\,dt}{t^{n+1-2\dz}}\r\}^{\frac{1}{2}}\noz\\
&\quad=:\widetilde{\Lambda}_3^{(1)}+\widetilde{\Lambda}_3^{(2)}.\noz
\end{align}

Now, we estimate $\widetilde{\Lambda}_3^{(1)}$.
Indeed, notice that, for any $t\in(0,\fz)$,
\begin{align*}
&\int_{\rn}\frac{1}{(t+|\xi-z|)^{2n+2\dz}}\,d\xi\\
&\quad=\int_{\rn}\frac{1}{(t+|\xi|)^{2n+2\dz}}\,d\xi
=\int_{0}^{t}\frac{u^{n-1}}{(t+u)^{2n+2\dz}}\,du
+\int_{t}^{\fz}\cdots\\
&\quad\leq \int_{0}^{t}\frac{u^{n-1}}{t^{2n+2\dz}}\,du+\int_{t}^{\fz}u^{-n-1-2\dz}\,du
=t^{-n-2\dz}.
\end{align*}
By this, the Minkowski inequality, $\lambda>2$,
and Lemma \ref{sum-g} with $\theta:=2^{-\frac{n(\lambda-2)}{2}}$,
we conclude that
\begin{align}\label{g-lam-0001}
\widetilde{\Lambda}_3^{(1)}
&\ls\lf\{\sum_{k=1}^{\infty}
\int_{0}^{4r}\int_{\rn}
\lf(\frac{t}{2^{k}r}\r)^{\lambda n}
\lf[\int_{2^{k+3}B}\frac{|f(z)
-f_{B}|}{(t+|\xi-z|)^{n+\dz}}\,dz\r]^2
\,\frac{d\xi\,dt}{t^{n+1-2\dz}}\r\}^{\frac{1}{2}}\\
&\lesssim\lf[\sum_{k=1}^{\infty}\lf(\frac{1}{2^kr}\r)^{\lambda n}
\int_{0}^{4r}t^{\lambda n-n-1+2\dz}\lf\{\int_{2^{k+3}B}
\lf[\int_{\rn}\frac{|f(z)-f_{B}|^2}
{(t+|\xi-z|)^{2n+2\dz}}\,d\xi\r]^{\frac{1}{2}}\,dz\r\}^2
\,dt\r]^{\frac{1}{2}}\noz\\
&\ls\lf\{\sum_{k=1}^{\infty}\lf(\frac{1}{2^kr}\r)^{\lambda n}
\int_{0}^{4r}t^{\lambda n-2n-1}\,dt\lf[
\int_{2^{k+3}B}\lf|f(z)-f_{B}\r|
\,dz\r]^2\,d\xi\r\}^{\frac{1}{2}}\noz\\
&\sim\lf\{\sum_{k=1}^{\infty}\frac{r^{-2n}}{2^{\lambda kn}}
\lf[\int_{2^{k+3}B}\lf|f(z)-f_{B}\r|\,dz
\r]^2\r\}^{\frac{1}{2}}
\lesssim\sum_{k=1}^{\infty}
\frac{1}{2^{\frac{kn(\lambda-2)}{2}}}\fint_{2^{k+3}B}
\lf|f(z)-f_{B}\r|\,dz\noz\\
&\lesssim\sum_{k=1}^{\infty}
\frac{1}{2^{\frac{kn(\lambda-2)}{2}}}\fint_{2^{k}B}
\lf|f(z)-f_{2^{k}B}\r|\,dz.\noz
\end{align}
This is a desired estimate of $\widetilde{\Lambda}_3^{(1)}$.

Next, we estimate $\widetilde{\Lambda}_3^{(2)}$.
Indeed, for any $k\in\nn$, $z\in\rn\setminus2^{k+3}B$,
$t\in(0,4r)$, and $\xi\in 2^{k+2}B$,
$$t+|\xi-z|\sim|y-z|.$$
From this, the Fubini theorem, and $\lambda\in(1,\fz)$, we deduce that
\begin{align*}
\widetilde{\Lambda}_3^{(2)}
&\ls\lf\{\sum_{k=1}^{\infty}\lf(\frac{1}{2^{k}r}\r)^{\lambda n}
\int_{0}^{4r}\int_{2^{k+2}B}
\lf[\sum_{l=k+3}^{\infty}\int_{2^{l+1}B\setminus2^{l}B}
\frac{|f(z)-f_{B}|}{|y-z|^{n+\dz}}\,dz\r]^2
\,\frac{d\xi\,dt}{t^{n-\lambda n+1-2\dz}}\r\}^{\frac{1}{2}}\\
&\ls\lf\{\sum_{k=1}^{\infty}\lf(\frac{1}{2^{k}r}\r)^{\lambda n}
\int_{0}^{4r}\int_{2^{k+2}B}
\lf[\sum_{l=k+3}^{\infty}\int_{2^{l+1}B}
\frac{|f(z)-f_{B}|}{(2^lr)^{n+\dz}}\,dz\r]^2
\,\frac{d\xi\,dt}{t^{n-\lambda n+1-2\dz}}\r\}^{\frac{1}{2}}\\
&\sim\lf\{\sum_{k=1}^{\infty}\lf(\frac{1}{2^{k}r}\r)^{\lambda n}
\int_{0}^{4r}\int_{2^{k+2}B}
\frac{1}{r^{2\dz}}\lf[\sum_{l=k+3}^{\infty}\frac{1}{2^{l\dz}}
\fint_{2^{{l+1}}B}\lf|f(z)-f_{B}\r|\,dz\r]^2
\,\frac{d\xi\,dt}{t^{n-\lambda n+1-2\dz}}\r\}^{\frac{1}{2}}\\
&\lesssim\sum_{k=1}^{\infty}\sum_{l=k+3}^{\infty}\lf[\frac{1}{2^{l\dz}}
\fint_{2^{l+1}B}\lf|f(z)-f_{B}\r|\,dz\r]
\lf[\lf(\frac{1}{2^{k}r}\r)^{\lambda n}\frac{1}{r^{2\dz}}\int_{0}^{4r}
\int_{|\xi-y|<2^{k+2}r}\,\frac{d\xi\,dt}{t^{n-\lambda n+1-2\dz}}
\r]^{\frac{1}{2}}\\
&\sim\sum_{l=4}^{\infty}\sum_{k=1}^{l-3}\frac{1}{2^{l\dz}}
\fint_{2^{l+1}B}\lf|f(z)-f_{B}\r|\,dz
2^{\frac{kn(1-\lambda)}{2}}
\sim\sum_{l=4}^{\infty}\frac{1}{2^{l\dz}}
\fint_{2^{l+1}B}\lf|f(z)-f_{B}\r|\,dz,
\end{align*}
which, together with Lemma \ref{sum-g} with
$\theta:=2^{-\dz}$, further implies that
\begin{align}\label{Lam-3-1'}
\widetilde{\Lambda}_3^{(2)}\lesssim
\sum_{l=1}^{\infty}\frac{1}{2^{l\dz}}
\fint_{2^{l}B}\lf|f(z)-f_{2^{l}B}\r|\,dz.
\end{align}
This is a desired estimate of $\widetilde{\Lambda}_3^{(2)}$.

Moreover, by \eqref{Pi-2}, \eqref{g-lam-0001}, and \eqref{Lam-3-1'},
we find that, for any $x\in B$,
\begin{align*}
g_{\lambda,0}^{*}\lf(f-f_{B}\r)(x)
\lesssim\sum_{k=1}^{\infty}
\lf[\frac{1}{2^{k\dz}}+\frac{1}{2^{\frac{kn(\lambda-2)}{2}}}\r]
\fint_{2^{k}B}\lf|f(z)-f_{2^{k}B}\r|\,dz
\end{align*}
and hence
\begin{align}\label{g-lam-2-03}
\widetilde{\Lambda}_3\ls\sum_{k=1}^{\infty}
\lf[\frac{1}{2^{k\dz}}+\frac{1}{2^{\frac{kn(\lambda-2)}{2}}}\r]
\fint_{2^{k}B}\lf|f(z)-f_{2^{k}B}\r|\,dz.
\end{align}
This is a desired estimate of $\widetilde{\Lambda}_3$.

Now, we estimate $\widetilde{\Lambda}_4$. To this end,
for any $k\in\nn$ and $y\in\rn$, let
$$\widetilde{J}(y,k):=J(y,k)\setminus\widetilde{J}(0),$$
where $J(y,k)$ is the same as in \eqref{j(k,y)}.
Then
\begin{align}\label{wJ-rn}
\rr_+^{n+1}\setminus\widetilde{J}(0)=\bigcup_{k\in\nn}\widetilde{J}(y,k).
\end{align}
We first estimate
$|g_{\lambda,\infty}^*(f)(x)-g_{\lambda,\infty}^*(f)(\widetilde{x})|$
for any $x\in B$ and $\wz x\in B$ such that $g_{\lambda,\fz}^*(f)(\wz x)<\fz$.
Indeed, from \eqref{def-g-lam}, we deduce that,
for any $x\in B$ and $\wz x\in B$ such that $g_{\lambda,\fz}^*(f)(\wz x)<\fz$,
\begin{align}\label{5.19x}
&\lf|g_{\lambda,\infty}^*(f)(x)-g_{\lambda,\infty}^*(f)(\widetilde{x})\r|\\
&\quad\leq\lf[\iint_{\rr_+^{n+1}\setminus \widetilde{J}(0)}
\lf(\frac{t}{t+|\xi|}\r)^{\lambda n}
\lf|f\ast\varphi_t(x+\xi)-f\ast\varphi_t(\widetilde{x}+\xi)
\r|^2\,\frac{d\xi\,dt}{t^{n+1}}\r]^{\frac{1}{2}}.\noz
\end{align}
Moreover, using \eqref{g-1}, we find that, for any $t\in (4r,\fz)$, $\xi\in\rn$,
$x\in B$, and $\wz x\in B$ such that $g_{\lambda,\fz}^*(f)(\wz x)<\fz$, and for any $k\in\nn$,
\begin{align*}
&\lf|f\ast \varphi_t(x+\xi)-f\ast\varphi_t(\widetilde{x}+\xi)\r|\\
&\quad=\lf|(f-f_{B})\ast \varphi_t(x+\xi)
-(f-f_{B})\ast\varphi_t(\widetilde{x}+\xi)\r|\\
&\quad\le\lf|\lf([f-f_{B}]
\mathbf{1}_{2^{k+3}B}\r)\ast\varphi_t(x+\xi)
-\lf([f-f_{B}]\mathbf{1}_{2^{k+3}B}\r)
\ast\varphi_t(\widetilde{x}+\xi)\r|\\
&\quad\quad+\lf|\lf([f-f_{B}]
\mathbf{1}_{\rn\setminus 2^{k+3}B}\r)\ast \varphi_t(x+\xi)
-\lf([f-f_{B}]\mathbf{1}_{\rn\setminus
2^{k+3}B}\r)\ast \varphi_t(\widetilde{x}+\xi)\r|\\
&\quad\le\int_{2^{k+3}B}\lf|f(z)-f_{B}\r|
\lf|\varphi\lf(\frac{x+\xi-z}{t}\r)-\varphi
\lf(\frac{\widetilde{x}+\xi-z}{t}\r)\r|\,\frac{dz}{t^n}\\
&\quad\quad+\int_{\rn\setminus 2^{k+3}B}\lf|f(z)-f_{B}\r|
\lf|\varphi\lf(\frac{x+\xi-z}{t}\r)-\varphi
\lf(\frac{\widetilde{x}+\xi-z}{t}\r)\r|\,\frac{dz}{t^n}.
\end{align*}
Thus, by this, \eqref{wJ-rn}, and \eqref{5.19x}, we find that,
for any $x\in B$ and $\wz x\in B$ such that $g_{\lambda,\infty}^{*}(f)(\wz x)<\fz$,
\begin{align}\label{glam*a2}
&\lf|g_{\lambda,\infty}^*(f)(x)-g_{\lambda,\infty}^*(f)(\widetilde{x})\r|\\
&\quad\leq\lf\{\sum_{k=1}^{\infty}
\iint_{\widetilde{J}(\mathbf{0},k)\setminus
\widetilde{J}(\mathbf{0},k-1)}
\lf(\frac{t}{t+|\xi|}\r)^{\lambda n}
\lf[\int_{2^{k+3}B}\lf|f(z)-f_{B}\r|\r.\r.\noz\\
&\qquad\lf.\lf.\times\lf|\varphi\lf(\frac{x+\xi-z}{t}\r)
-\varphi\lf(\frac{\widetilde{x}+\xi-z}{t}\r)\r|\,\frac{dz}{t^n}\r]^2
\,\frac{d\xi\,dt}{t^{n+1}}\r\}^{\frac{1}{2}}\noz\\
&\qquad+\lf\{\sum_{k=1}^{\infty}
\iint_{\widetilde{J}(\mathbf{0},k)\setminus
\widetilde{J}(\mathbf{0},k-1)}\lf(\frac{t}{t+|\xi|}\r)^{\lambda n}
\lf[\int_{\rn\setminus 2^{k+3}B}
\lf|f(z)-f_{B}\r|\r.\r.\noz\\
&\qquad\lf.\lf.\times\lf|\varphi\lf(\frac{x+\xi-z}{t}\r)
-\varphi\lf(\frac{\widetilde{x}+\xi-z}{t}\r)\r|\,\frac{dz}{t^n}\r]^2
\,\frac{d\xi\,dt}{t^{n+1}}\r\}^{\frac{1}{2}}\noz\\
&\quad=:\widetilde{\Lambda}_4^{(1)}(x,\wz x)+\widetilde{\Lambda}_4^{(2)}(x,\wz x).\noz
\end{align}

Next, we estimate $\wz \Lambda_4^{(1)}(x,\wz x)$
for any $x\in B$ and $\widetilde{x}\in B$
such that $g_{\lambda,\infty}^{*}(f)(\wz x)<\fz$.
Indeed, for any $(\xi,t)\in\rr_+^{n+1}$ and $\lambda_1,\lambda_2\in(1,\infty)$
satisfying $\lambda_1>\lambda_2$, we conclude that
$$\lf(\frac{t}{t+|\xi|}\r)^{\lambda_1 n}\leq\lf(\frac{t}{t+|\xi|}\r)^{\lambda_2 n}.$$
Using this and $q\in(1,\fz)$,
in the following estimates for $\widetilde{\Lambda}_4^{(1)}(x,\wz x)$,
we may always assume that
\begin{align}\label{lam-g*-1-01}
\lambda\in\lf(\max\lf\{1,\frac{2}{q}\r\}+\frac{2}{n},2+\frac{2}{n}\r)
\cap\lf(2,2+\frac{2}{n}\r).
\end{align}
Moreover, by the mean value theorem and \eqref{g-3'},
we conclude that, for any $t\in(4r,\fz)$, $x,\widetilde{x}\in B$,
$z\in 2^{k+3}B$ with $y\in\rn$ and $r\in(0,\fz)$,
and $(\xi,t)\in \widetilde{J}(\mathbf{0},k)
\setminus \widetilde{J}(\mathbf{0},k-1)$ with $k\in\nn$,
there exists a $\theta\in (0,1)$ such that
\begin{align*}
&\lf|\varphi\lf(\frac{x+\xi-z}{t}\r)
-\varphi\lf(\frac{\widetilde{x}+\xi-z}{t}\r)\r|\\
&\quad\lesssim\frac{t^{n+\dz}|x-\widetilde{x}|}{[t+|x+\xi-z
+\theta(\widetilde{x}-x)|]^{n+1+\dz}}
\lesssim\frac{t^{n+\dz}r}{(t+|x+\xi-z|-2r)^{n+1+\dz}}\\
&\quad\sim\frac{t^{n+\dz}r}{(t+|x+\xi-z|)^{n+1+\dz}},
\end{align*}
where, in the last step, we used $t\in(4r,\fz)$.
From this, the Minkowski inequality, and \eqref{lam-g*-1-01},
we deduce that, for any $x\in B$ and $\widetilde{x}\in B$
such that $g_{\lambda,\infty}^{*}(f)(\wz x)<\fz$,
\begin{align*}
\widetilde{\Lambda}_4^{(1)}(x,\wz x)
&\lesssim\lf\{\sum_{k=1}^{\infty}
\iint_{\widetilde{J}(\mathbf{0},k)\setminus
\widetilde{J}(\mathbf{0},k-1)}\lf(\frac{t}{t+|\xi|}\r)^{\lambda n}
\lf[\int_{2^{k+3}B}\frac{t^{\dz}r|f(z)-f_{B}|}{(t+|x+\xi-z|)^{n+1+\dz}}
\,dz\r]^2\,\frac{d\xi\,dt}{t^{n+1}}\r\}^{\frac{1}{2}}\\
&\lesssim \lf\{r^2\sum_{k=1}^{\infty}
\int_{\rn}\int_{0}^{2^{k+2}r}\lf(\frac{1}{2^kr}\r)^{\lambda n}\r.\\
&\quad\lf.\times\lf[\int_{2^{k+3}B}\frac{|f(z)
-f_{B}|}{(t+|x+\xi-z|)^{n+1+\dz}}\,dz\r]^{2}
\,\frac{dt\,d\xi}{t^{n-\lambda n+1-2\dz}}\r\}^{\frac{1}{2}}\\
&\lesssim \lf[r^2\sum_{k=1}^{\infty}
\lf(\frac{1}{2^kr}\r)^{\lambda n}\int_{\rn}\lf\{\int_{2^{k+3}B}
\lf|f(z)-f_{B}\r|\r.\r.\\
&\quad\lf.\lf.\times\lf[\int_0^{2^{k+2}r}
\frac{t^{\lambda n-n-1+2\dz}}{(t+|x+\xi-z|)^{2(n+1+\dz)}}\,dt\r]^\frac{1}{2}\,dz
\r\}^2\,d\xi\r]^{\frac{1}{2}}\\
&\lesssim \lf[r^2\sum_{k=1}^{\infty}
\lf(\frac{1}{2^kr}\r)^{\lambda n}\int_{\rn}\lf\{\int_{2^{k+3}B}
\lf|f(z)-f_{B}\r|\r.\r.\\
&\quad\lf.\lf.\times\lf[\lf(\int_0^{|x+\xi-z|}+\int_{|x+\xi-z|}^{\fz}\r)
\frac{t^{\lambda n-n-1+2\dz}}{(t+|x+\xi-z|)^{2(n+1+\dz)}}\,dt\r]^\frac{1}{2}\,dz
\r\}^2\,d\xi\r]^{\frac{1}{2}}\\
&\sim \lf\{r^2\sum_{k=1}^{\infty}\lf(\frac{1}{2^kr}\r)^{\lambda n}
\int_{\rn}\lf[\int_{2^{k+3}B}\frac{|f(z)-f_{B}|}
{|u-z|^{n-\frac{\lambda n-n-2}{2}}}\,dz\r]^2\,du\r\}^{\frac{1}{2}},
\end{align*}
which, combined with \eqref{lam-g*-1-01}, Lemma \ref{fractional}(ii)
with $\beta:=\frac{\lambda n-n-2}{2}\in (0,n)$,
$a:=\frac{1}{\frac{\lambda}{2}-\frac{1}{n}}\in (1,q)\cap(1,\frac{n}{\beta})$,
and $\widetilde{a}:=2$,
and Lemma \ref{sum-g} with $\theta:=\frac{1}{2}$, further implies that,
for any $x\in B$ and $\widetilde{x}\in B$
such that $g_{\lambda,\infty}^{*}(f)(\wz x)<\fz$,
\begin{align}\label{lam-04-02}
\widetilde{\Lambda}_4^{(1)}(x,\wz x)
&\lesssim \lf\{r^2\sum_{k=1}^{\infty}\lf(\frac{1}{2^kr}\r)^{\lambda n}
\lf[\int_{2^{k+3}B}\lf|f(z)-f_{B}
\r|^a\,dz\r]^\frac{2}{a}\r\}^{\frac{1}{2}}\\
&\lesssim r\sum_{k=1}^{\infty}\lf(\frac{1}{2^kr}
\r)^{\frac{\lambda n}{2}-n(\frac{\lambda}{2}-\frac{1}{n})}
\lf[\fint_{2^{k+3}B}\lf|f(z)-f_{B}\r|^a\,dz\r]^\frac{1}{a}\noz\\
&\lesssim\sum_{k=1}^{\infty}\frac{1}{2^{k}}
\lf[\fint_{2^{k+3}B}\lf|f(z)
-f_{B}\r|^q\,dz\r]^\frac{1}{q}\noz\\
&\lesssim\sum_{k=1}^{\infty}\frac{1}{2^{k}}
\lf[\fint_{2^{k}B}\lf|f(z)
-f_{2^{k}B}\r|^q\,dz\r]^\frac{1}{q}.\noz
\end{align}
This is a desired estimate of $\widetilde{\Lambda}_4^{(1)}(x,\wz x)$.

Now, we estimate $\wz \Lambda_4^{(2)}(x,\wz x)$.
Indeed, by the mean value theorem and \eqref{g-3'},
we conclude that, for any $t\in(4r,\fz)$, $x\in B$,
$\wz x\in B$ such that $g_{\lambda}^*(f)(\wz x)<\fz$,
$(\xi,t)\in \widetilde{J}(\mathbf{0},k)
\setminus \widetilde{J}(\mathbf{0},k-1)$ with $k\in\nn$,
and $z\in 2^{l+1}B\setminus 2^lB$ with integer $l\geq k+3$,
there exists a $\theta\in (0,1)$ such that
\begin{align*}
&\lf|\varphi\lf(\frac{x+\xi-z}{t}\r)
-\varphi\lf(\frac{\widetilde{x}+\xi-z}{t}\r)\r|\\
&\quad\lesssim\frac{t^{n+\dz}|x-\widetilde{x}|}{[t+|x+\xi-z
+\theta(\widetilde{x}-x)|]^{n+1+\dz}}
\lesssim\frac{t^{n+\dz}r}{|x+\xi-z|^{n+1+\dz}}
\sim\frac{t^{n+\dz}r}{(2^lr)^{n+1+\dz}},
\end{align*}
where, in the last step, we used
$$
|x+\xi-z|\geq |z-y|-|y-x|-|\xi|\geq
|y-z|-r-2^{k+2}r\sim |y-z|\sim 2^lr.
$$
From this, the Fubini theorem, and $\lambda>1$, we deduce that,
for any $x\in B$ and $\wz x\in B$ such that $g_{\lambda,\infty}^{*}(f)(\wz x)<\fz$,
\begin{align*}
\widetilde{\Lambda}_4^{(2)}(x,\wz x)
&\lesssim\lf\{\sum_{k=1}^{\infty}
\iint_{\widetilde{J}(\mathbf{0},k)\setminus
\widetilde{J}(\mathbf{0},k-1)}\lf(\frac{t}{2^{k+2}r}\r)^{\lambda n}
\lf[\sum_{l=k+3}^{\infty}\int_{2^{l+1}B\setminus2^lB}
\frac{rt^\dz|f(z)-f_{B}|}{(2^lr)^{n+1+\dz}}
\,dz\r]^2\,\frac{d\xi\,dt}{t^{n+1}}\r\}^{\frac{1}{2}}\\
&\lesssim\lf\{\sum_{k=1}^{\infty}
\iint_{\widetilde{J}(\mathbf{0},k)\setminus \widetilde{J}(\mathbf{0},k-1)}
\lf(\frac{1}{2^{k}r}\r)^{\lambda n}\lf[
\sum_{l=k+3}^{\infty}\frac{r^{-\dz}}{2^{(1+\dz)l}}\fint_{2^{l+1}B}
\lf|f(z)-f_{B}\r|\,dz\r]^2
\,\frac{d\xi\,dt}{t^{n-\lambda n+1-2\dz}}\r\}^{\frac{1}{2}}\\
&\lesssim\sum_{k=1}^{\infty}\sum_{l=k+3}^{\infty}\frac{r^{-\dz}}{2^{(1+\dz)l}}
\fint_{2^{l+1}B}\lf|f(z)-f_{B}\r|\,dz
\lf[\lf(\frac{1}{2^{k}r}\r)^{\lambda n}
\iint_{\widetilde{J}(\mathbf{0},k)\setminus\widetilde{J}(\mathbf{0},k-1)}
\,\frac{d\xi\,dt}{t^{n-\lambda n+1-2\dz}}\r]^{\frac{1}{2}}\\
&\lesssim\sum_{k=1}^{\infty}\sum_{l=k+3}^{\infty}
\frac{r^{-\dz}}{2^{(1+\dz)l}}\fint_{2^{l+1}B}\lf|f(z)-f_{B}\r|\,dz
\lf[\lf(\frac{1}{2^{k}r}\r)^{\lambda n}\int_{0}^{2^{k+2}r}
\int_{2^{k+2}B}\,\frac{d\xi\,dt}{t^{n-\lambda n+1-2\dz}}\r]^{\frac{1}{2}}\\
&\sim\sum_{l=4}^{\infty}\sum_{k=1}^{l-3}\frac{2^{k\dz}}{2^{(1+\dz)l}}
\fint_{2^{l+1}B}\lf|f(z)-f_{B}\r|\,dz
\sim\sum_{l=4}^{\infty}\frac{1}{2^l}\fint_{2^{l+1}B}
\lf|f(z)-f_{B}\r|\,dz,
\end{align*}
which, together with Lemma \ref{I-JN}, further implies that, for any $x\in B$ and
$\wz x\in B$ such that $g_{\lambda,\infty}^{*}(f)(\wz x)<\fz$,
\begin{align}\label{glam*a3}
&\widetilde{\Lambda}_4^{(2)}(x,\wz x)
\lesssim\sum_{l=1}^{\infty}\frac{1}{2^{l}}\fint_{2^{l}B}
\lf|f(z)-f_{2^{l}B}\r|\,dz.
\end{align}
This is a desired estimate of $\widetilde{\Lambda}_4^{(2)}(x,\wz x)$.

Moreover, by \eqref{glam*a2}, \eqref{lam-04-02}, \eqref{glam*a3},
and the H\"older inequality, we conclude that,
for any $x\in B$ and $\widetilde{x}\in B$
such that $g_{\lambda,\infty}^{*}(f)(\wz x)<\fz$,
\begin{align*}
\lf|g_{\lambda,y,\infty}^*(f)(x)-g_{\lambda,y,\infty}^*(f)(\widetilde{x})\r|
\lesssim\sum_{k=1}^{\infty}\frac{1}{2^{k}}
\lf[\fint_{2^{k}B}\lf|f(z)-f_{2^kB}\r|^q\,dz\r]^\frac{1}{q}.
\end{align*}
Thus,
\begin{align*}
\widetilde{\Lambda}_4
\lesssim\sum_{k=1}^{\infty}\frac{1}{2^{k}}
\lf[\fint_{2^{k}B}\lf|f(z)-f_{2^kB}\r|^q\,dz\r]^\frac{1}{q},
\end{align*}
which, combined with \eqref{g-lam-2-000}, \eqref{g-lam-2-01},
and \eqref{g-lam-2-03}, further implies that
\begin{align*}
&\lf[\fint_{B}\lf\{g_\lambda^*(f)(x)-\inf_{\widetilde{x}\in B}
g_\lambda^*(f)(\widetilde{x})\r\}^q\,dx\r]^{\frac{1}{q}}\\
&\quad\lesssim\sum_{k=1}^{\infty}\lf[\frac{1}{2^{k\dz}}
+\frac{1}{2^{\frac{kn(\lambda-2)}{2}}}\r]
\lf[\fint_{2^{k}B}\lf|f(z)-f_{2^{k}B}\r|^q\,dz\r]^{\frac{1}{q}}.
\end{align*}	
This finishes the proof of (ii), and hence of Theorem \ref{g-lam-01}.
\end{proof}

Next, we prove the main results of this section.
Indeed, the proof of Theorem \ref{g-l-bounded3} is similar to that
of Theorem \ref{g-l-bounded4} and hence we only prove the second one.

\begin{proof}[Proof of Theorem \ref{g-l-bounded4}]
Let $X$, $p_-$, $q$, $\dz$, $r$, $d$, $\lambda$, and $g_{\lambda}^*$
be the same as in the present theorem. The proofs of
both (i) and (ii) of the present theorem
are strongly depend, respectively, on both (i) and (ii) of Lemma \ref{g-lam-01},
and we only prove (ii) because the proof of (i) is similar.

We first claim that, if $g_{\lambda}^{*}(f)(x_0)<\fz$ for one $x_0\in \rn$, then
$g_{\lambda}^{*}(f)$ is finite almost everywhere. To this end,
let $m\in\nn$, $\{B_j\}_{j=1}^m\subset {\mathbb{B}}(\rn)$ be
such that $x_0\in B_j$ for any $j\in\nn$,
and $\{\lambda_j\}_{j=1}^m\subset[0,\fz)$ such that $\sum_{j=1}^m\lambda_j\neq 0$.
Also, for any $i\in\{1,\ldots,m\}$ and $k\in\nn$, let
$\lambda_{i,k}:=\frac{\lambda_i\|\mathbf{1}_{2^kB_i}\|_{X}}{\|\mathbf{1}_{B_i}\|_{X}}$.
By Lemma \ref{g-lam-01}, \eqref{g-thm-B-02}, the definition of
$\|\cdot\|_{\mathcal{L}_{X,q,0,d}(\rn)}$,
$\lambda\in(\frac{2}{\min\{d,p_-\}},\infty)$, and
$\min\{d,p_-\}\in(\frac{n}{n+\dz},\fz)$, we conclude that,
for any given $r\in(\frac{n}{n+\dz},\min\{d,p_-\})
\cap(\frac{2}{\lambda},\min\{d,p_-\})$,
\begin{align}\label{thm-B-l-07}
&\lf\|\lf\{\sum_{i=1}^m\lf(\frac{\lambda_{i}}{\|\mathbf{1}_{B_i}\|_{X}}\r)^d
\mathbf{1}_{B_i}\r\}^{\frac{1}{d}}\r\|_{X}^{-1}
\sum_{j=1}^m\frac{\lambda_j|B_j|}{\|\mathbf{1}_{B_j}\|_{X}}
\lf[\fint_{B_j}\lf\{g_{\lambda}^{*}(f)(x)
-\inf_{\wz x\in B_j}g(f)(\wz x)\r\}^q\,dx\r]^{\frac{1}{q}}\\
&\quad\ls\lf\|\lf\{\sum_{i=1}^m\lf(\frac{\lambda_{i}}{\|\mathbf{1}_{B_i}\|_{X}}\r)^d
\mathbf{1}_{B_i}\r\}^{\frac{1}{d}}\r\|_{X}^{-1}\noz\\
&\qquad\times\sum_{j=1}^m\frac{\lambda_j|B_j|}{\|\mathbf{1}_{B_j}\|_{X}}
\sum_{k=1}^{\infty}\lf[\frac{1}{2^{k\dz}}
+\frac{1}{2^{\frac{kn(\lambda-2)}{2}}}\r]
\lf[\fint_{2^{k}B_j}\lf|f(z)-f_{2^{k}B_j}\r|^q\,dz\r]^{\frac{1}{q}}\noz\\
&\quad\ls\sum_{k\in\nn}2^{k(-n+\frac{n}{r})}\lf\|\lf\{\sum_{i=1}^m
\lf(\frac{\lambda_{i,k}}{\|\mathbf{1}_{2^kB_i}\|_{X}}\r)^d
\mathbf{1}_{2^kB_i}\r\}^{\frac{1}{d}}\r\|_{X}^{-1}\noz\\
&\qquad\times\sum_{j=1}^m\frac{\lambda_{j,k}|2^kB_j|}
{\|\mathbf{1}_{2^kB_j}\|_{X}}
\lf[\frac{1}{2^{k\dz}}+\frac{1}{2^{\frac{kn(\lambda-2)}{2}}}\r]
\lf[\fint_{2^kB_j}\lf|f(x)-f_{2^kB_j}\r|^q\,dx\r]^{\frac{1}{q}}\noz\\
&\quad\ls\sum_{k\in\nn}\lf[2^{k(-n-\dz+\frac{n}{r})}
+2^{kn(-\frac{\lambda}{2}+\frac{1}{r})}\r]
\|f\|_{\mathcal{L}_{X,q,0,d}(\rn)}\noz
\ls\|f\|_{\mathcal{L}_{X,q,0,d}(\rn)}<\fz.\noz
\end{align}
From this, we deduce that, for any $j\in\nn$,
$$\lf[\fint_{B_j}\lf\{g_{\lambda}^{*}(f)(x)-\inf_{\wz x\in B_j}
g_{\lambda}^{*}(f)(\wz x)\r\}^q\,dx\r]^{\frac{1}{q}}<\fz$$
and hence $g_{\lambda}^{*}(f)(x)<\fz$ for almost every $x\in B_j$,
which, together with the arbitrariness of $B_j$,
further implies that, for almost every $x\in\rn$, $g_{\lambda}^{*}(f)(x)<\fz$.
Thus, the above claim holds true.

Now, we show that \eqref{thm-b-3'} holds true
if $g_{\lambda}^{*}(f)(x_0)<\fz$ for one $x_0\in \rn$.
In this case, by the above claim and an argument similar to
that used in the estimation of \eqref{thm-B-l-07},
we conclude that, for any $m\in\nn$, any balls $\{B_j\}_{j=1}^m\subset {\mathbb{B}}(\rn)$,
and any $\{\lambda_j\}_{j=1}^m\subset[0,\fz)$ with $\sum_{j=1}^m\lambda_j\neq 0$,
\begin{align*}
&\lf\|\lf\{\sum_{i=1}^m\lf(\frac{\lambda_{i}}{\|\mathbf{1}_{B_i}\|_{X}}\r)^d
\mathbf{1}_{B_i}\r\}^{\frac{1}{d}}\r\|_{X}^{-1}
\sum_{j=1}^m\frac{\lambda_j|B_j|}{\|\mathbf{1}_{B_j}\|_{X}}
\lf[\fint_{B_j}\lf\{g_{\lambda}^*(f)(x)-\inf_{\wz x\in B_j}
g_{\lambda}^{*}(f)(\wz x)\r\}^q\,dx\r]^{\frac{1}{q}}\\
&\quad\ls\|f\|_{\mathcal{L}_{X,q,0,d}(\rn)},
\end{align*}
which, combined with the definition of
$\|\cdot\|_{\mathcal{L}^{\mathrm{low}}_{X,q,0,d}(\rn)}$,
further implies that \eqref{thm-b-3'} holds true.
This finishes the proof of (ii), and hence of Theorem \ref{g-l-bounded4}.
\end{proof}

\begin{remark}\label{g-l-bound-rem-B}
\begin{enumerate}
\item[\rm (i)] 		
Let $X$ be concave and $d\in(0,1]$. In this case, from Remark \ref{rem-ball-B3}(i),
we deduce that Theorem \ref{g-l-bounded4} coincides with Theorem \ref{g-l-bounded3}.
	
\item[\rm (ii)]
Let $X:=L^{1}(\rn)$, $\dz=1$, and $q\in(1,\fz)$.
In this case, $\mathcal{L}_{X,q,0}(\rn)=\mathrm{BMO}(\rn)$
and the case (i) of Theorem \ref{g-l-bounded3} coincides with \cite[Corollary 1.3]{MY}.	
	
\item[\rm (iii)]
Let $p\in(\frac{n}{n+1},\fz)$, $X:=L^p(\rn)$, $\dz=1$, and $q\in(1,\fz)$. In this case,
$\mathcal{L}_{X,q,0}(\rn)=\mathcal{C}_{\frac{1}{p}-1,q,0}(\rn)$ and
Theorem \ref{g-l-bounded3} improves the range
$\lambda\in(3+\frac{2}{n},\fz)$ in \cite[Theorem 3]{Sun04}
into $\lambda\in(\max\{1,\frac{2}{q}\},\infty)$
when $p\in(\frac{n}{n+1/2},\fz)$,
or $\lambda\in(\max\{1,\frac{2}{q}\}+\frac{2}{n},\infty)\cap(\frac{2}{p},\infty)$
when $p\in(\frac{n}{n+1}, \frac{n}{n+1/2}]$.

\item[\rm (iv)]
Let $s\in\nn$. In this case,
it is still unclear
whether or not the Littlewood--Paley $g_{\lambda}^*$-function
is bounded on either $\mathcal{L}_{X,q,s}(\rn)$ or $\mathcal{L}_{X,q,s,d}(\rn)$.
\end{enumerate}
\end{remark}

\section{Applications\label{S-ap}}

Theorems \ref{thm-B-Banach}, \ref{thm-B-Banach-S},
\ref{g-l-bounded3}, and \ref{g-l-bounded4} can be applied to a plenty of
concrete examples of ball quasi-Banach function spaces, for instance,
weighted Lebesgue spaces, mixed-norm Lebesgue spaces, variable Lebesgue spaces,
Orlicz spaces, and Orlicz--slice spaces.
Therefore, we obtain the boundedness of Littlewood--Paley operators on these spaces.
Moreover, to the best of our knowledge, all these results are new.
These examples explicitly reveal that more applications of the
main results of this article to new function spaces are possible. Furthermore,
we also apply Lemmas \ref{lem-g-fz}, \ref{lem-s-fz},
and \ref{g-lam-01} to $JN_{p,q,\alpha}^{\mathrm{con}}(\rn)$
[the special John--Nirenberg--Campanato spaces via congruent cubes
(see, for instance, \cite[Definition 1.3]{jtyyz1})], and then
obtain the boundedness of Littlewood--Paley operators on
$JN_{p,q,\alpha}^{\mathrm{con}}(\rn)$
(see Subsection \ref{g-bounded-JN} below), which is also new.

\subsection{Weighted Lebesgue Spaces\label{weight}}

In this subsection, we apply
Theorems \ref{thm-B-Banach}, \ref{thm-B-Banach-S},
\ref{g-l-bounded3}, and \ref{g-l-bounded4}
to weighted Lebesgue spaces.
To this end, we first recall the definition of Muckenhoupt weights
$A_p(\rn)$ (see, for instance, \cite[Definitions 7.1.2 and 7.1.3]{G1}).

\begin{definition}
Let $p\in[1,\infty)$ and $\omega$ be a nonnegative locally
integrable function on $\rn$. Then
$\omega$ is called an \emph{$A_{p}(\rn)$ weight},
denoted by $\omega\in A_p(\rn)$, if, when $p\in(1,\infty)$,
\begin{align*}
[\omega]_{A_{p}\left(\mathbb{R}^{n}\right)}
:=\sup _{B \subset \mathbb{R}^{n}}\frac{1}{|B|}
\lf[\int_{B}\omega(x)\,dx\r]\left\{\frac{1}{|B|}
\int_{B}[\omega(x)]^{-\frac{1}{p-1}} \,dx\right\}^{p-1}<\infty,
\end{align*}
and
\begin{align*}
[\omega]_{A_{1}\left(\mathbb{R}^{n}\right)}
:=\sup _{B \subset \mathbb{R}^{n}}\frac{1}{|B|}
\lf[\int_{B}\omega(x)\,dx\r]\left\{\mathop{\mathrm{ess\,sup}}_{x\in\rn}
[\omega(x)]^{-1}\right\}<\infty,
\end{align*}
where the suprema are taken over all balls $B \in\mathbb{B}(\rn)$.
Moreover, the \emph{class $A_\infty(\rn)$} is defined by setting
$$
A_\infty(\rn):=\bigcup_{p\in[1,\infty)}A_p(\rn).
$$
\end{definition}

\begin{definition}\label{def-wei}
Let $p\in(0,\infty)$ and $\omega\in A_\infty(\rn)$.
The \emph{weighted Lebesgue space} $L_{\omega}^{p}(\mathbb{R}^{n})$ is
defined to be the set of all the measurable
functions $f$ on $\rn$ such that
\begin{align*}
\lf\|f\r\|_{L^p_\omega(\rn)}=\lf[\int_{\rn}|f(x)|^p\omega(x)\,dx\r]
^{\frac{1}{p}}<\infty.
\end{align*}
\end{definition}

\begin{remark}		
Let $p\in(0,\fz)$ and $\omega\in A_{\fz}(\rn)$.
As was pointed out in \cite[Section 7.1]{SHYY}, $L^p_\omega(\rn)$ is a
ball quasi-Banach space but it may not be a quasi-Banach function space.
\end{remark}

In what follows, for any
$\omega\in A_\fz(\rn)$, let
\begin{align*}
q_{\omega}:=\inf\lf\{q\in[1,\fz):\ \omega\in A_q(\rn)\r\}.
\end{align*}
The following theorem is a corollary of Theorems \ref{thm-B-Banach}, \ref{thm-B-Banach-S},
\ref{g-l-bounded3}, and \ref{g-l-bounded4}.

\begin{theorem}\label{weighted}
Let $\dz\in(0,1]$, $p\in(0,\fz)$, and $\omega\in A_\fz(\rn)$ satisfy
$p_-:=\frac{p}{q_\omega}\in(\frac{n}{n+\dz},\fz)$, and let $d\in(\frac{n}{n+\dz},\fz)$.
Let $\lambda\in(\max\{1,\frac{2}{q}\},\infty)$
when $\min\{d,p_-\}\in(\frac{n}{n+\frac{\dz}{2}},\fz)$,
or $\lambda\in(\max\{1,\frac{2}{q}\}+\frac{2}{n},\infty)
\cap(\frac{2}{\min\{d,p_-\}},\infty)$
when $\min\{d,p_-\}\in(\frac{n}{n+\dz},\frac{n}{n+\frac{\dz}{2}}]$.
Let $\Lambda$ be $g$, $S$, or $g_{\lambda}^*$ in Definition \ref{g-f}.
Then, for any $f\in \mathcal{L}_{L^{p}_{\omega}(\rn),q,0,d}(\rn)$,
$\Lambda(f)$ is either infinite everywhere or finite almost everywhere and, in the latter case,
there exists a positive constant $C$, independent of $f$, such that
\begin{align*}
\|\Lambda(f)\|_{\mathcal{L}^{\mathrm{low}}_{L^{p}_{\omega}(\rn),q,0,d(\rn)}}
\leq C\|f\|_{L^p_{\omega}(\rn),q,0,d}(\rn).
\end{align*}
Similar results also hold true for the space $\mathcal{L}_{L^{p}_{\omega}(\rn),q,0}(\rn)$.
\end{theorem}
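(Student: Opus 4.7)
The plan is to deduce Theorem \ref{weighted} directly from Theorems \ref{thm-B-Banach}, \ref{thm-B-Banach-S}, \ref{g-l-bounded3}, and \ref{g-l-bounded4} by specializing $X$ to the weighted Lebesgue space $L^{p}_{\omega}(\rn)$. The structure is: (i) check that $L^{p}_{\omega}(\rn)$ is a ball quasi-Banach function space; (ii) verify that Assumption \ref{assump1} holds with the claimed $p_-=p/q_\omega$; (iii) read off the three statements by invoking the three abstract theorems in turn.

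For step (i), I would note that items (i)--(iii) of Definition \ref{Debqfs} for $L^p_\omega(\rn)$ are standard consequences of the definition of the weighted norm (positivity, lattice property, and monotone convergence with respect to the measure $\omega(x)\,dx$), while item (iv) follows from the fact that, for any $\omega\in A_\fz(\rn)$ and any ball $B$, one has $\omega(B)<\fz$, so $\mathbf{1}_B\in L^p_\omega(\rn)$. This is the observation already recorded in \cite[Section 7.1]{SHYY}.

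For step (ii), which is the substantive part, I would argue as follows. Fix $p'\in(0,p_-)=(0,p/q_\omega)$; then the $p'$-convexification satisfies $[L^p_\omega(\rn)]^{1/p'}=L^{p/p'}_\omega(\rn)$ with $p/p'>q_\omega$. By the very definition of $q_\omega$, there exists $q\in(q_\omega,p/p')$ with $\omega\in A_q(\rn)$, and hence $\omega\in A_{p/p'}(\rn)$ by the nesting of Muckenhoupt classes. The Fefferman--Stein vector-valued maximal inequality on weighted Lebesgue spaces (Andersen--John) then yields, for every $u\in(1,\fz)$ and every $\{f_j\}_{j\in\nn}\subset\mathscr{M}(\rn)$, the estimate \eqref{assume-fs} on $X^{1/p'}=L^{p/p'}_\omega(\rn)$. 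This is precisely Assumption \ref{assump1} with $p_-=p/q_\omega$.

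For step (iii), the arithmetic conditions on $d$, $\dz$, $\lambda$, and $q$ stated in Theorem \ref{weighted} are exactly those demanded by the hypotheses of Theorems \ref{thm-B-Banach}, \ref{thm-B-Banach-S}, \ref{g-l-bounded3}, and \ref{g-l-bounded4}, so each of the three cases ($\Lambda=g$, $\Lambda=S$, $\Lambda=g_\lambda^*$) follows by a single invocation of the corresponding abstract theorem; the parallel statement on $\mathcal{L}_{L^p_\omega(\rn),q,0}(\rn)$ is obtained similarly from the second half of each theorem. The main (and rather mild) obstacle is only bookkeeping, namely making certain that the two threshold regimes for $\lambda$ in Theorem \ref{g-l-bounded4}, governed by $\min\{d,p_-\}$ compared with $\frac{n}{n+\dz/2}$, are transcribed correctly with $p_-=p/q_\omega$; no further analysis is needed.
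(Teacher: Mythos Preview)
Your proposal is correct and follows essentially the same approach as the paper: verify that $L^p_\omega(\rn)$ is a ball quasi-Banach function space (citing \cite{SHYY}), check Assumption \ref{assump1} with $p_-=p/q_\omega$ via the Andersen--John weighted Fefferman--Stein inequality, and then apply Theorems \ref{thm-B-Banach}, \ref{thm-B-Banach-S}, \ref{g-l-bounded3}, and \ref{g-l-bounded4}. Your write-up is in fact more detailed than the paper's own proof, which simply cites \cite[Theorem 3.1(b)]{aj1980} for step (ii) without spelling out the $A_{p/p'}$ argument.
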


\begin{proof}
Let all the symbols be the same as in the present theorem.
It is easy to show that $L^{p}_{\omega}(\rn)$ is a ball quasi-Banach function space
(see, for instance, \cite[Section 7.1]{SHYY}).
Moreover, by \cite[Theorem 3.1(b)]{aj1980}, we conclude that $L^p_\omega(\rn)$
satisfies Assumption \ref{assump1} with both $X:=L^p_\omega(\rn)$
and $p_-$ in the present theorem.
Thus, applying Theorems \ref{thm-B-Banach}, \ref{thm-B-Banach-S},
\ref{g-l-bounded3}, and \ref{g-l-bounded4}
with $X:=L_{\omega}^p(\rn)$, we then obtain the
desired conclusions and hence complete the proof of Theorem \ref{weighted}.
\end{proof}

\subsection{Mixed-Norm Lebesgue Spaces\label{3s1}}

Recall that Benedek and Panzone \cite{BP} introduced the
mixed-norm Lebesgue space $L^{\vec{p}}(\rn)$ with $\vec{p}\in (0,\fz]^n$,
which is a natural generalization of the Lebesgue space $L^p(\rn)$.
We refer the reader to \cite{cgn17,HLYY,HLYY2019,hlyy20,hy}
for more studies on mixed-norm Lebesgue spaces.
To be precise, the mixed-norm Lebesgue space is defined as follows.

\begin{definition}\label{mix}
Let $\vec{p}:=(p_1,\ldots,p_n)\in(0,\infty]^n$.
The \emph{mixed-norm Lebesgue space $L^{\vec{p}}(\rn)$} is defined
to be the set of all the measurable functions $f$ on $\rn$ such that
$$
\|f\|_{L^{\vec{p}}(\rn)}:=\lf\{\int_{\rr}\cdots
\lf[\int_{\rr}|f(x_1,\ldots,x_n)|^{p_1}\,dx_1\r]
^{\frac{p_2}{p_1}}\cdots\,dx_n\r\}^{\frac{1}{p_n}}<\infty
$$
with the usual modifications made when $p_i=\infty$ for some $i\in\{1,\ldots,n\}$.
Moreover, let
\begin{align}\label{3e1}
p_-:= \min\{p_1,\ldots,p_n\}.
\end{align}
\end{definition}

\begin{remark}\label{mix-r}
\begin{enumerate}	
\item[\rm (i)]		
Let $\vec{p}\in(0, \infty]^{n}$.	
It is easy to show that
$L^{\vec{p}}(\mathbb{R}^{n})$
is a ball quasi-Banach function space (see, for instance, \cite[p.\,2047]{zyyw}).
However, as was pointed in \cite[Remark 7.21]{zyyw}, $L^{\vec{p}}(\rn)$
may not be a quasi-Banach function space.
\item[\rm (ii)]	
We refer the reader to \cite{Ho2016,Ho-2020} for more studies on the
boundedness of (sub)linear operators on mixed-norm (Hardy) spaces.
\end{enumerate}
\end{remark}

The following theorem is a corollary of Theorems \ref{thm-B-Banach}, \ref{thm-B-Banach-S},
\ref{g-l-bounded3}, and \ref{g-l-bounded4}.

\begin{theorem}\label{3t1}
Let $\dz\in(0,1]$, $\vec{p}\in(0,\fz)^n$, $p_-\in(\frac{n}{n+\dz},\fz)$ be the same as in \eqref{3e1},
$q\in(1,\fz)$, and $d\in(\frac{n}{n+\dz},\fz)$.
Let $\lambda\in(\max\{1,\frac{2}{q}\},\infty)$ when $\min\{d,p_-\}\in(\frac{n}{n+\frac{\dz}{2}},\fz)$,
or $\lambda\in(\max\{1,\frac{2}{q}\}+\frac{2}{n},\infty)
\cap(\frac{2}{\min\{d,p_-\}},\infty)$
when $\min\{d,p_-\}\in(\frac{n}{n+\dz},\frac{n}{n+\frac{\dz}{2}}]$.
Let $\Lambda$ be $g$, $S$, or $g_{\lambda}^*$ in Definition \ref{g-f}.
Then, for any $f\in \mathcal{L}_{L^{\vec{p}}(\rn),q,0,d}(\rn)$,
$\Lambda(f)$ is either infinite everywhere or finite almost everywhere and, in the latter case,
there exists a positive constant $C$, independent of $f$, such that
\begin{align*}
\|\Lambda(f)\|_{\mathcal{L}^{\mathrm{low}}_{L^{\vec{p}}(\rn),q,0,d}(\rn)}
\leq C\|f\|_{\mathcal{L}_{L^{\vec{p}}(\rn),q,0,d}(\rn)}.
\end{align*}
Similar results also hold true for the space $\mathcal{L}_{L^{\vec{p}}(\rn),q,0}(\rn)$.
\end{theorem}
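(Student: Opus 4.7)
The plan is to mirror the proof of Theorem \ref{weighted}, treating Theorem \ref{3t1} as a direct specialization of the four general results Theorems \ref{thm-B-Banach}, \ref{thm-B-Banach-S}, \ref{g-l-bounded3}, and \ref{g-l-bounded4} to the choice $X:=L^{\vec p}(\rn)$. First I would record, as already observed in Remark \ref{mix-r}(i), that $L^{\vec p}(\rn)$ is a ball quasi-Banach function space in the sense of Definition \ref{Debqfs}; this requires only the standard monotone convergence and membership of indicators of balls in $L^{\vec p}(\rn)$, both of which are classical for the mixed-norm scale. With this in hand, nothing in the definitions of $\mathcal L_{L^{\vec p}(\rn),q,0,d}(\rn)$ and $\mathcal L_{L^{\vec p}(\rn),q,0}(\rn)$ requires new verification, and it only remains to establish Assumption \ref{assump1}.

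The single substantive step is therefore the Fefferman--Stein vector-valued maximal inequality on $L^{\vec p}(\rn)$ with the exponent $p_-:=\min\{p_1,\ldots,p_n\}$ of \eqref{3e1}. The plan is to rely on the identity $[L^{\vec p}(\rn)]^{1/p}=L^{\vec p/p}(\rn)$ with equivalent quasi-norms (a direct computation from Definition \ref{mix}), so that for any $p\in(0,p_-)$ the target space $X^{1/p}$ is itself a mixed-norm Lebesgue space whose exponent vector $\vec p/p$ has every component in $(1,\infty]$. On such spaces the Fefferman--Stein vector-valued maximal inequality is known; a suitable reference is the mixed-norm maximal estimate of Huang--Liu--Yang--Yuan (see the references cited after Definition \ref{mix}), which supplies \eqref{assume-fs} for every $u\in(1,\infty)$ and completes the verification of Assumption \ref{assump1} with the prescribed $p_-$.

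Once Assumption \ref{assump1} is in place, the hypotheses on $q$, $d$, $\dz$, and $\lambda$ that appear in Theorem \ref{3t1} are literally those required by Theorems \ref{thm-B-Banach}, \ref{thm-B-Banach-S}, \ref{g-l-bounded3}, and \ref{g-l-bounded4}. Thus, for each choice $\Lambda\in\{g,S,g_\lambda^*\}$, applying the corresponding theorem with $X:=L^{\vec p}(\rn)$ yields both the dichotomy (infinite everywhere or finite almost everywhere) and the quantitative bound, in the $\mathcal L_{X,q,0,d}$ norm for the ball Campanato-type setting and, by the same citation pattern, in the $\mathcal L_{X,q,0}$ norm for the classical variant. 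The only step that I expect to demand any care at all is matching the range of $p_-$ coming from the mixed-norm Fefferman--Stein estimate against the threshold $\frac{n}{n+\dz}$ appearing in the hypotheses; since $p_-=\min\{p_1,\ldots,p_n\}$ is the optimal choice and is precisely the exponent assumed in the theorem, this matching is automatic and the proof reduces to invocation of the general theorems.
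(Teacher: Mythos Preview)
Your proposal is correct and follows essentially the same approach as the paper: verify that $L^{\vec p}(\rn)$ is a ball quasi-Banach function space, check Assumption \ref{assump1} with $p_-=\min\{p_1,\ldots,p_n\}$ via the mixed-norm Fefferman--Stein inequality (the paper cites \cite[Lemma 3.5]{HLYY}), and then apply Theorems \ref{thm-B-Banach}, \ref{thm-B-Banach-S}, \ref{g-l-bounded3}, and \ref{g-l-bounded4} with $X:=L^{\vec p}(\rn)$. Your explanation of the convexification identity $[L^{\vec p}(\rn)]^{1/p}=L^{\vec p/p}(\rn)$ is a helpful elaboration of why the cited reference applies, but the overall argument is the same.
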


\begin{proof}
Let all the symbols be the same as in the present theorem.
It is easy to show that $L^{\vec{p}}(\rn)$ is a ball quasi-Banach function space
(see, for instance, \cite[p.\,2047]{zyyw}).
Moreover, from \cite[Lemma 3.5]{HLYY}, we deduce that $L^{\vec{p}}(\rn)$ satisfies
Assumption \ref{assump1} with both $X:=L^{\vec{p}}(\rn)$ and $p_-$ in \eqref{3e1}.
Thus, applying Theorems \ref{thm-B-Banach}, \ref{thm-B-Banach-S},
\ref{g-l-bounded3}, and \ref{g-l-bounded4}
with $X:=L^{\vec{p}}(\rn)$, we then obtain the
desired conclusions and hence complete the proof of Theorem \ref{3t1}.
\end{proof}

\subsection{Variable Lebesgue Spaces\label{3s2}}

Let $p(\cdot):\ \rn\to[0,\infty)$ be a measurable function.
Recall that the \emph{variable Lebesgue space $L^{p(\cdot)}(\rn)$} is defined to be
the set of all the measurable functions $f$ on $\rn$ such that
$$
\|f\|_{L^{p(\cdot)}(\rn)}:=\inf\lf\{\lambda\in(0,\infty):\ \int_\rn\lf[\frac{|f(x)|}{\lambda}\r]^{p(x)}\,dx\le1\r\}<\infty.
$$
We refer the reader to \cite{CUF,DHR,Ho2018,is2012,int2014,ins2015,ms2018}
for more studies on variable Lebesgue spaces.
In the reminder of this subsection, for any measurable function
$p(\cdot):\ \rn\to(0,\infty)$, we always let
\begin{align}\label{3e2}
p_-:=\underset{x\in\rn}{\essinf}\,p(x)\quad\text{and}\quad
p_+:=\underset{x\in\rn}{\esssup}\,p(x).
\end{align}
Recall that $p(\cdot):\ \rn\to(0,\infty)$ is said to be
\emph{globally log-H\"older continuous} if there
exists a $p_{\infty}\in\rr$ such that, for any $x,y\in\rn$,
$$|p(x)-p(y)|\lesssim\frac{1}{\log(e+1/|x-y|)}
\quad\text{and}\quad
|p(x)-p_\infty|\lesssim\frac{1}{\log(e+|x|)},$$
where the implicit positive constants are independent of both $x$ and $y$.
As was pointed out in \cite[Section 7.1]{SHYY}, $L^{p(\cdot)}(\rn)$ is a
ball quasi-Banach space but it may not be a quasi-Banach function space.

The following theorem is a corollary of Theorems \ref{thm-B-Banach}, \ref{thm-B-Banach-S},
\ref{g-l-bounded3}, and \ref{g-l-bounded4}.

\begin{theorem}\label{3t2}
Let $\dz\in(0,1]$, $p(\cdot):\ \rn\to(0,\infty)$ be a globally
log-H\"older continuous function satisfying
$\frac{n}{n+\dz}<p_-\le p_+<\infty$,
$q\in(1,\fz)$, and $d\in(\frac{n}{n+\dz},\fz)$.
Let $\lambda\in(\max\{1,\frac{2}{q}\},\infty)$ when $\min\{d,p_-\}\in(\frac{n}{n+\frac{\dz}{2}},\fz)$,
or $\lambda\in(\max\{1,\frac{2}{q}\}+\frac{2}{n},\infty)
\cap(\frac{2}{\min\{d,p_-\}},\infty)$
when $\min\{d,p_-\}\in(\frac{n}{n+\dz},\frac{n}{n+\frac{\dz}{2}}]$.
Let $\Lambda$ be $g$, $S$, or $g_{\lambda}^*$ in Definition \ref{g-f}.
Then, for any $f\in \mathcal{L}_{L^{p(\cdot)}(\rn),q,0,d}(\rn)$,
$\Lambda(f)$ is either infinite everywhere or finite almost everywhere and, in the latter case,
there exists a positive constant $C$, independent of $f$, such that
\begin{align*}
\|\Lambda(f)\|_{\mathcal{L}^{\mathrm{low}}_{L^{p(\cdot)}(\rn),q,0,d}(\rn)}
\leq C\|f\|_{\mathcal{L}_{L^{p(\cdot)}(\rn),q,0,d}(\rn)}.
\end{align*}
Similar results also hold true for the space $\mathcal{L}_{L^{p(\cdot)}(\rn),q,0}(\rn)$.	
\end{theorem}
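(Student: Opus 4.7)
The plan is to deduce Theorem \ref{3t2} as a direct corollary of Theorems \ref{thm-B-Banach}, \ref{thm-B-Banach-S}, \ref{g-l-bounded3}, and \ref{g-l-bounded4} applied with $X:=L^{p(\cdot)}(\rn)$, in complete analogy with the proofs of Theorems \ref{weighted} and \ref{3t1} just carried out. To invoke those general results, only two facts about $L^{p(\cdot)}(\rn)$ need to be verified: that it is a ball quasi-Banach function space on $\rn$, and that, with $p_-$ as in \eqref{3e2}, it satisfies Assumption \ref{assump1}.

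The first point is standard and, as already noted in Subsection \ref{3s2}, it follows from \cite[Section 7.1]{SHYY}, so no new argument is needed. The substantive step is the verification of the Fefferman--Stein vector-valued maximal inequality \eqref{assume-fs} on $[L^{p(\cdot)}(\rn)]^{1/p}=L^{p(\cdot)/p}(\rn)$ for every $p\in(0,p_-)$ and every $u\in(1,\fz)$. Under the globally log-H\"older hypothesis and $p_->\frac{n}{n+\dz}$, for any such $p$ small enough the exponent function $p(\cdot)/p$ is again globally log-H\"older continuous with $(p(\cdot)/p)_-=p_-/p>1$ and $(p(\cdot)/p)_+=p_+/p<\fz$. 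In that regime the vector-valued maximal inequality on $L^{p(\cdot)/p}(\rn)$ is classical (see, for instance, \cite{CUF} or \cite{DHR}), which yields Assumption \ref{assump1} for the range $p\in(0,p_-)$ as required.

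With both hypotheses secured, the desired bound for $\Lambda(f)$ with $\Lambda\in\{g,S,g_\lambda^*\}$ on $\mathcal{L}_{L^{p(\cdot)}(\rn),q,0,d}(\rn)$ is obtained by a verbatim application of Theorem \ref{thm-B-Banach}, Theorem \ref{thm-B-Banach-S}, and the appropriate case of Theorem \ref{g-l-bounded4}, matching the dichotomy on $\min\{d,p_-\}$ built into the hypotheses on $\lambda$. The companion assertion on $\mathcal{L}_{L^{p(\cdot)}(\rn),q,0}(\rn)$ follows in exactly the same way from the ``resp.'' clauses together with Theorem \ref{g-l-bounded3}.

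The only mildly nontrivial step is pinning down the correct citation for the vector-valued Fefferman--Stein inequality on variable Lebesgue spaces that covers the whole range $p_-\in(\frac{n}{n+\dz},\fz)$ via the self-improvement $p(\cdot)\mapsto p(\cdot)/p$; this is by now routine in the variable-exponent literature, and the parallel proofs of Theorems \ref{weighted} and \ref{3t1} in this paper dispatch the analogous verification by a single citation, so I do not expect any serious obstacle here.
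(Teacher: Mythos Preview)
Your proposal is correct and follows essentially the same route as the paper: verify that $L^{p(\cdot)}(\rn)$ is a ball quasi-Banach function space via \cite[Section 7.1]{SHYY}, check Assumption \ref{assump1} by the Fefferman--Stein vector-valued maximal inequality on variable Lebesgue spaces, and then invoke Theorems \ref{thm-B-Banach}, \ref{thm-B-Banach-S}, \ref{g-l-bounded3}, and \ref{g-l-bounded4}. The only cosmetic difference is the citation for the vector-valued maximal inequality: the paper cites \cite[Corollary 2.1]{cfmp06} directly, whereas you point to \cite{CUF} or \cite{DHR} after the convexification $p(\cdot)\mapsto p(\cdot)/p$, which amounts to the same thing.
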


\begin{proof}
Let all the symbols be the same as in the present theorem.
It is easy to show that $L^{p(\cdot)}(\rn)$ is a
ball quasi-Banach function space
(see, for instance, \cite[Section 7.1]{SHYY}).
Moreover, from \cite[Corollary 2.1]{cfmp06}, we deduce that
Assumption \ref{assump1} with both $X:=L^{p(\cdot)}(\rn)$
and $p_-$ in \eqref{3e2} is satisfied.
Thus, applying Theorems \ref{thm-B-Banach}, \ref{thm-B-Banach-S},
\ref{g-l-bounded3}, and \ref{g-l-bounded4} with $X:=L^{p(\cdot)}(\rn)$,
we then obtain the desired conclusions and hence complete the proof of Theorem \ref{3t2}.
\end{proof}

\subsection{Orlicz Spaces}\label{3s4}

Recall that a function $\Phi:\ [0,\infty)\to[0,\infty)$
is called an \emph{Orlicz function} if it is
non-decreasing and satisfies $\Phi(0)= 0$, $\Phi(t)>0$ whenever $t\in(0,\infty)$,
and $\lim_{t\to\infty}\Phi(t)=\infty$.
An Orlicz function $\Phi$ is said to be
of \emph{lower} (resp., \emph{upper}) \emph{type} $p$ with $p\in(-\infty,\infty)$ if
there exists a positive constant $C_{(p)}$, depending on $p$,
such that, for any $t\in[0,\infty)$
and $s\in(0,1)$ [resp., $s\in [1,\infty)$],
\begin{align*}
\Phi(st)\le C_{(p)}s^p \Phi(t).
\end{align*}
An Orlicz function $\Phi$ is said to be of
\emph{positive lower} (resp., \emph{upper}) \emph{type} if it is of lower
(resp., upper) type $p$ for some $p\in(0,\infty)$.
Then we recall the definition of Orlicz spaces as follows
(see, for instance, \cite[p.\,58, Definition 2]{RR}).

\begin{definition}\label{fine}
Let $\Phi$ be an Orlicz function with positive lower type
$p_{\Phi}^-$ and positive upper type $p_{\Phi}^+$.
The \emph{Orlicz space $L^\Phi(\rn)$} is defined
to be the set of all the measurable functions $f$ on $\rn$ such that
$$\|f\|_{L^\Phi(\rn)}:=\inf\lf\{\lambda\in(0,\infty):\ \int_{\rn}
\Phi\lf(\frac{|f(x)|}{\lambda}\r)\,dx\le1\r\}<\infty.$$
\end{definition}

\begin{remark}
\begin{enumerate}	
\item[\rm (i)]		
Let $\Phi$ be an Orlicz function on $\rn$ with positive lower type
$p_{\Phi}^-\in(0,\fz)$ and positive upper type $p_{\Phi}^+\in(0,\fz)$.
As was pointed out in \cite[Section 7.6]{SHYY},
$L^{\Phi}(\rn)$ is a quasi-Banach
function space and hence a ball quasi-Banach function space.
\item[\rm (ii)]	
We refer the reader to \cite{ans2021,Ho2013,hns2016,Na2001,ss2018,ss2019,sst2012} for more studies on
the boundedness of (sub)linear operators on Orlicz-type spaces.
\end{enumerate}
\end{remark}

The following theorem is a corollary of Theorems \ref{thm-B-Banach}, \ref{thm-B-Banach-S},
\ref{g-l-bounded3}, and \ref{g-l-bounded4}.

\begin{theorem}\label{3t3}
Let $\dz\in(0,1]$, $\Phi$ be an Orlicz function with lower type $p_{\Phi}^-$
and upper type $p_{\Phi}^+$ satisfying $\frac{n}{n+\dz}<p_{\Phi}^-\le p_{\Phi}^+<\fz$,
$q\in(1,\fz)$, and $d\in(\frac{n}{n+\dz},\fz)$.
Let $\lambda\in(\max\{1,\frac{2}{q}\},\infty)$ when
$\min\{d,p_{\Phi}^-\}\in(\frac{n}{n+\frac{\dz}{2}},\fz)$,
or $\lambda\in(\max\{1,\frac{2}{q}\}+\frac{2}{n},\infty)
\cap(\frac{2}{\min\{d,p_{\Phi}^-\}},\infty)$
when $\min\{d,p_{\Phi}^-\}\in(\frac{n}{n+\dz},\frac{n}{n+\frac{\dz}{2}}]$.
Let $\Lambda$ be $g$, $S$, or $g_{\lambda}^*$ in Definition \ref{g-f}.
Then, for any $f\in \mathcal{L}_{L^\Phi(\rn),q,0,d}(\rn)$,
$\Lambda(f)$ is either infinite everywhere or finite almost everywhere and, in the latter case,
there exists a positive constant $C$, independent of $f$, such that
\begin{align*}
\|\Lambda(f)\|_{\mathcal{L}^{\mathrm{low}}_{L^\Phi(\rn),q,0,d}(\rn)}
\leq C\|f\|_{\mathcal{L}_{L^\Phi(\rn),q,0,d}(\rn)}.
\end{align*}
Similar results also hold true for the space $\mathcal{L}_{L^\Phi(\rn),q,0}(\rn)$.	
\end{theorem}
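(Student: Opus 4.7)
The plan is to mimic exactly the strategy used in the proofs of Theorems \ref{weighted}, \ref{3t1}, and \ref{3t2}: verify that $L^\Phi(\rn)$ is a ball quasi-Banach function space satisfying Assumption \ref{assump1} with the $p_-$ prescribed by the hypothesis on $p_\Phi^-$, and then simply invoke Theorems \ref{thm-B-Banach}, \ref{thm-B-Banach-S}, \ref{g-l-bounded3}, and \ref{g-l-bounded4} with $X:=L^\Phi(\rn)$. Since the three main theorems handle the Littlewood--Paley operators $g$, $S$, and $g_\lambda^*$ under the same abstract assumption, once the Fefferman--Stein inequality is in hand, the conclusions for all three choices $\Lambda\in\{g,S,g_\lambda^*\}$ follow uniformly. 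The corresponding statement for $\mathcal{L}_{L^\Phi(\rn),q,0}(\rn)$ is obtained by exactly the same invocation, using the alternative conclusions of those theorems.

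First I would record that $L^\Phi(\rn)$ is a quasi-Banach function space, and therefore a ball quasi-Banach function space; this is standard (see \cite[Section 7.6]{SHYY}) and requires only the defining properties of an Orlicz function together with the positive lower/upper type conditions on $\Phi$. The identification of $\Phi$ as a Young function of positive lower type $p_\Phi^-$ and positive upper type $p_\Phi^+$ guarantees that all four axioms of Definition \ref{Debqfs} are satisfied.

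The main step is to verify Assumption \ref{assump1} with $X:=L^\Phi(\rn)$ and with $p_-:=p_\Phi^-$. Concretely, for any $p\in(0,p_\Phi^-)$ the space $X^{1/p}=L^{\Phi_{1/p}}(\rn)$ is itself an Orlicz space whose defining function $\Phi_{1/p}(t):=\Phi(t^{1/p})$ has lower type $p_\Phi^-/p>1$ and upper type $p_\Phi^+/p$. The Fefferman--Stein vector-valued maximal inequality \eqref{assume-fs} on such Orlicz spaces is known from the literature on the boundedness of the Hardy--Littlewood maximal operator in the Orlicz setting; I would cite the relevant boundedness result (for instance, the one quoted in \cite[Section 7.6]{SHYY} that underlies the corresponding atomic decomposition of Orlicz--Hardy spaces) to obtain \eqref{assume-fs} for every $u\in(1,\infty)$. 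The hard part, and the only non-routine piece, is matching the exact form of \eqref{assume-fs} with the form available in the Orlicz literature; this is purely a bookkeeping issue about upgrading a scalar maximal bound to its vector-valued counterpart via the standard Fefferman--Stein duality/extrapolation argument in a quasi-Banach function setting.

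Once Assumption \ref{assump1} is verified with $p_-=p_\Phi^-$, the hypothesis $p_\Phi^-\in(\tfrac{n}{n+\dz},\fz)$ ensures the range condition on $p_-$ in each of Theorems \ref{thm-B-Banach}, \ref{thm-B-Banach-S}, \ref{g-l-bounded3}, and \ref{g-l-bounded4}; similarly, the two cases for $\lambda$ stated in Theorem \ref{3t3} are exactly the two cases (i) and (ii) of Theorem \ref{g-l-bounded4} expressed in terms of $\min\{d,p_\Phi^-\}$. Applying these theorems directly with $X:=L^\Phi(\rn)$ therefore yields both the dichotomy between everywhere infinite and almost everywhere finite values of $\Lambda(f)$, and the inequality $\|\Lambda(f)\|_{\mathcal{L}^{\mathrm{low}}_{L^\Phi(\rn),q,0,d}(\rn)}\le C\|f\|_{\mathcal{L}_{L^\Phi(\rn),q,0,d}(\rn)}$, together with the parallel statement on $\mathcal{L}_{L^\Phi(\rn),q,0}(\rn)$. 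This completes the proof.
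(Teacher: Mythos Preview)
Your proposal is correct and follows essentially the same approach as the paper: verify that $L^\Phi(\rn)$ is a ball quasi-Banach function space (citing \cite[Section 7.6]{SHYY}), check Assumption \ref{assump1} with $p_-:=p_\Phi^-$, and then invoke Theorems \ref{thm-B-Banach}, \ref{thm-B-Banach-S}, \ref{g-l-bounded3}, and \ref{g-l-bounded4}. The only difference is that the paper supplies the precise citation \cite[Theorem 2.10]{lhy12} (see also \cite[Theorem 7.12]{SHYY}) for the Fefferman--Stein vector-valued maximal inequality on Orlicz spaces, whereas you left this as a literature pointer to be filled in.
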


\begin{proof}
Let all the symbols be the same as in the present theorem.
It is easy to show that $L^\Phi(\rn)$ is a
ball quasi-Banach function space (see, for instance, \cite[Section 7.6]{SHYY}).
Moreover, from \cite[Theorem 2.10]{lhy12}
(see also \cite[Theorem 7.12]{SHYY}),
we deduce that Assumption \ref{assump1} with both $X:=L^\Phi(\rn)$
and $p_-:=p_{\Phi}^{-}$ is satisfied.
Thus, applying Theorems \ref{thm-B-Banach}, \ref{thm-B-Banach-S},
\ref{g-l-bounded3}, and \ref{g-l-bounded4}
with $X:=L^\Phi(\rn)$, we then obtain the
desired conclusions and hence complete the proof of Theorem \ref{3t3}.
\end{proof}

\subsection{Orlicz-Slice Spaces\label{3s3}}

In this subsection, we apply
the main results of this article to the Orlicz-slice space $(E_\Phi^r)_t(\rn)$
which was first introduced in \cite{zyyw2019}.
To this end, we first recall the following
definition of Orlicz-slice spaces which is just \cite[Definition 2.8]{zyyw2019}.

\begin{definition}\label{so}
Let $t,r\in(0,\infty)$ and $\Phi$ be an Orlicz function
with positive lower type $p_{\Phi}^-$ and
positive upper type $p_{\Phi}^+$. The \emph{Orlicz-slice space} $(E_\Phi^r)_t(\rn)$
is defined to be the set of all the measurable functions $f$ on $\rn$ such that
$$
\|f\|_{(E_\Phi^r)_t(\rn)}
:=\lf\{\int_{\rn}\lf[\frac{\|f\mathbf{1}_{B(x,t)}\|_{L^\Phi(\rn)}}
{\|\mathbf{1}_{B(x,t)}\|_{L^\Phi(\rn)}}\r]^r\,dx\r\}^{\frac{1}{r}}<\infty.
$$
\end{definition}

\begin{remark}
\begin{enumerate}
\item[\rm (i)]	
Let $t,r\in(0,\infty)$ and $\Phi$ be an Orlicz function on $\rn$
with $0<p_{\Phi}^-\le p_{\Phi}^+<\infty$.
By \cite[Lemma 2.28]{zyyw2019},
we find that $(E_\Phi^r)_t(\rn)$ is a
ball quasi-Banach function space, but it may not be a quasi-Banach function space
(see, for instance, \cite[Remark 7.43(i)]{zyyw}).

\item[\rm (ii)]	
We refer the reader to \cite{Ho-01,Ho2022,zyw2022} for more studies on the boundedness of
(sub)linear operators on (local) Orlicz-slice spaces.
\end{enumerate}
\end{remark}

The following theorem is a corollary of Theorems \ref{thm-B-Banach}, \ref{thm-B-Banach-S},
\ref{g-l-bounded3}, and \ref{g-l-bounded4}.

\begin{theorem}\label{3t4}
Let $\dz\in(0,1]$ and $\Phi$ be an Orlicz function with both lower type $p_{\Phi}^-$
and upper type $p_{\Phi}^+$ satisfying $\frac{n}{n+\dz}<p_{\Phi}^-\le p_{\Phi}^+<\fz$.
Let $t\in(0,\infty)$, $r\in(\frac{n}{n+\dz},\fz)$, $q\in(1,\fz)$, and $d\in(\frac{n}{n+\dz},\fz)$.
Let $\lambda\in(\max\{1,\frac{2}{q}\},\infty)$ when
$\min\{d,p_{\Phi}^-,r\}\in(\frac{n}{n+\frac{\dz}{2}},\fz)$,
or $\lambda\in(\max\{1,\frac{2}{q}\}+\frac{2}{n},\infty)
\cap(\frac{2}{\min\{d,p_{\Phi}^-,r\}},\infty)$
when $\min\{d,p_{\Phi}^-,r\}\in(\frac{n}{n+\dz},\frac{n}{n+\frac{\dz}{2}}]$.
Let $\Lambda$ be $g$, $S$, or $g_{\lambda}^*$ in Definition \ref{g-f}.
Then, for any $f\in \mathcal{L}_{(E_\Phi^r)_t(\rn),q,0,d}(\rn)$,
$\Lambda(f)$ is either infinite everywhere or finite almost everywhere and, in the latter case,
there exists a positive constant $C$, independent of $f$, such that
\begin{align*}
\|\Lambda(f)\|_{\mathcal{L}^{\mathrm{low}}_{(E_\Phi^r)_t(\rn),q,0,d}(\rn)}
\leq C\|f\|_{\mathcal{L}_{(E_\Phi^r)_t(\rn),q,0,d}(\rn)}.
\end{align*}
Similar results also hold true for the space $\mathcal{L}_{(E_\Phi^r)_t(\rn),q,0}(\rn)$.	
\end{theorem}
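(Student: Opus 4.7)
The plan is to follow the template established by the proofs of Theorems \ref{weighted}, \ref{3t1}, \ref{3t2}, and \ref{3t3}: verify that $(E_\Phi^r)_t(\rn)$ is a ball quasi-Banach function space and that it satisfies Assumption \ref{assump1} for the appropriate value of $p_-$, and then invoke Theorems \ref{thm-B-Banach}, \ref{thm-B-Banach-S}, \ref{g-l-bounded3}, and \ref{g-l-bounded4} with $X:=(E_\Phi^r)_t(\rn)$. Since the hypotheses of the present theorem already match, verbatim, the hypotheses of those four theorems once the correct $p_-$ is identified, no new analytic input should be required beyond the two structural facts about $(E_\Phi^r)_t(\rn)$.

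For the first step, the fact that $(E_\Phi^r)_t(\rn)$ is a ball quasi-Banach function space is already recorded in \cite[Lemma 2.28]{zyyw2019}, so conditions (i)--(iv) of Definition \ref{Debqfs} are automatic and no separate verification is needed. For the second step, I would appeal to the Fefferman--Stein vector-valued maximal inequality on Orlicz-slice spaces established in the literature on these spaces (for instance, in \cite{zyyw2019} or in references therein), which yields Assumption \ref{assump1} with the choice $p_- := \min\{p_\Phi^-, r\}$. With this identification, the condition $p_- \in (\frac{n}{n+\dz},\fz)$ required by the main theorems is exactly the hypothesis $p_\Phi^- \in (\frac{n}{n+\dz},\fz)$ together with $r \in (\frac{n}{n+\dz},\fz)$ imposed in the statement, and the various subcases on $\lambda$ translate verbatim through $\min\{d, p_-\} = \min\{d, p_\Phi^-, r\}$, matching the statement exactly.

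Once these two prerequisites are in place, the proof reduces to a one-line invocation: applying Theorem \ref{thm-B-Banach} to handle $\Lambda = g$, Theorem \ref{thm-B-Banach-S} for $\Lambda = S$, and Theorems \ref{g-l-bounded3} and \ref{g-l-bounded4} for $\Lambda = g_\lambda^*$ (the two subcases on $\lambda$ correspond precisely to items (i) and (ii) of Theorem \ref{g-l-bounded4}). The statements for $\mathcal{L}_{(E_\Phi^r)_t(\rn),q,0}(\rn)$ follow from the ``resp.\ '' clauses of the same four theorems.

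The only potential obstacle I foresee is a bookkeeping issue: pinpointing the correct reference for the Fefferman--Stein inequality on $(E_\Phi^r)_t(\rn)$ and verifying that the value of $p_-$ it produces is indeed $\min\{p_\Phi^-, r\}$ rather than, say, $p_\Phi^-$ alone. If the cleanest available reference gives only $p_- = p_\Phi^-$, one may need a short interpolation-style argument using the integrability exponent $r$ in the definition of $(E_\Phi^r)_t(\rn)$ to extract the sharper $\min\{p_\Phi^-, r\}$; this is straightforward given that $\|\cdot\|_{(E_\Phi^r)_t(\rn)}$ is a mixed $L^r$-norm of localized Orlicz averages. Beyond this, the proof is purely a deduction from the abstract theorems already proved, so I would keep the written proof to a single short paragraph mirroring the proofs of Theorems \ref{weighted}--\ref{3t3}.
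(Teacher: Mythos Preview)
Your proposal is correct and matches the paper's proof essentially line for line: the paper cites \cite[Lemma 2.28]{zyyw2019} for the ball quasi-Banach property and \cite[Lemma 4.3]{zyyw2019} for the Fefferman--Stein inequality with $p_-=\min\{r,p_\Phi^-\}$, then invokes Theorems \ref{thm-B-Banach}, \ref{thm-B-Banach-S}, \ref{g-l-bounded3}, and \ref{g-l-bounded4}. Your anticipated bookkeeping concern is resolved by \cite[Lemma 4.3]{zyyw2019}, which gives exactly $p_-=\min\{r,p_\Phi^-\}$, so no additional interpolation argument is needed.
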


\begin{proof}
Let all the symbols be the same as in the present theorem.
By \cite[Lemma 2.28]{zyyw2019}, we find that $(E_\Phi^r)_t(\rn)$ is a
ball quasi-Banach function space. Moreover, from \cite[Lemma 4.3]{zyyw2019},
we deduce that Assumption \ref{assump1}
with both $X:=(E_\Phi^r)_t(\rn)$ and $p_-:=\min\{r,p_{\Phi}^-\}$ is satisfied.
Thus, applying Theorems \ref{thm-B-Banach}, \ref{thm-B-Banach-S},
\ref{g-l-bounded3}, and \ref{g-l-bounded4} with $X:=(E_\Phi^r)_t(\rn)$,
we then obtain the desired conclusions and hence complete the proof of Theorem \ref{3t3}.
\end{proof}

\subsection{Special John--Nirenberg--Campanato Spaces via Congruent Cubes\label{g-bounded-JN}}

Recall that John and Nirenberg \cite{JN} introduced the
well-known space $\mathop{\mathrm{BMO}\,}(\rn)$ and, in the same article, they also
introduced the mysterious John--Nirenberg space.
Recall that Dafni et al. \cite{DHKY} showed
the non-triviality of the John--Nirenberg space. Indeed, the John--Nirenberg space now
attracts more and more attention. We refer the reader to
\cite{ABKY,FPW,M,MM,SXY,TYY19,TYY2,TYY20S} for more studies on these spaces.
However, it is still a challenging and open question to obtain the boundedness
of some important operators on the John--Nirenberg space.

To shed some light on the boundedness of Littlewood--Paley operators
on John--Nirenberg spaces, Jia et al. \cite[Definition 1.3]{jtyyz1} introduced
the special John--Nirenberg--Campanato space via congruent cubes,
$JN_{(p,q,s)_\alpha}^{\mathrm{con}}(\rn)$.
In this subsection, we use Lemmas \ref{lem-g-fz},
\ref{lem-s-fz}, and \ref{g-lam-01} to obtain the boundedness of
Littlewood--Paley operators on $JN_{(p,q,0)_\alpha}^{\mathrm{con}}(\rn)$.
We refer the reader to \cite{jtyyz2,jtyyz3}, respectively, for the boundedness of
Calder\'on--Zygmund operators and fractional integrals on
$JN_{(p,q,s)_\alpha}^{\mathrm{con}}(\rn)$.

In what follows, a \emph{cube} always has a finite
edge length and all its edges parallel to the coordinate
axes, but it is not necessary to be open or closed. Moreover,
for any $m\in \zz$, we use $\mathcal{D}_m(\rn)$ to
denote the set of all the subcubes of $\rn$ with side length $2^{-m}$.

\begin{definition}\label{Defin.jncc1}
Let $p,q\in[1,\infty]$ and $\alpha\in\rr$.
Then the \emph{special John--Nirenberg--Campanato space via congruent cubes},
$JN_{p,q,\alpha}^{\mathrm{con}}(\rn)$, is defined to be the set of all the
$f\in L^q_\loc(\rn)$ such that
\begin{align*}
&\|f\|_{JN_{p,q,\alpha}^{\mathrm{con}}(\rn)}\\
&\quad:=
\begin{cases}
\displaystyle
\sup_{m\in\zz}\sup_{\{Q_j\}_j\subset \mathcal{D}_m(\rn)}
\lf[\sum_{j}\lf|Q_{j}\r|\lf\{\lf|Q_{j}\r|^{-\alpha}\lf[\fint_{Q_{j}}
\lf|f(x)-f_{Q_{j}}\r|^{q}\,dx\r]^{\frac{1}{q}}\r\}^{p} \r]^{\frac{1}{p}}\\
\displaystyle\qquad\qquad\qquad\qquad\quad\quad\quad\ \text{if}\quad p\in[1,\fz),\\
\displaystyle\|f\|_{\mathcal{C}_{\alpha,q,0}(\rn)}
\qquad\qquad\qquad\ \ \ \,\text{if}\quad p=\fz
\end{cases}
\end{align*}
is finite, where $\|\cdot\|_{\mathcal{C}_{\alpha,q,0}(\rn)}$ is the same as in \eqref{campanato}.
\end{definition}

\begin{remark}
\begin{enumerate}
\item[\rm (i)]
Observe that $JN_{p,q,\alpha}^{\mathrm{con}}(\rn)$ is only a special
case when $s=0$ of the special John--Nirenberg--Campanato space via congruent cubes,
$JN_{(p,q,s)_\alpha}^{\mathrm{con}}(\rn)$,
introduced in \cite[Definition 1.3]{jtyyz1}. If we use the same notation as there,
$JN_{p,q,\alpha}^{\mathrm{con}}(\rn)$ here is just the space
$JN_{(p,q,0)_\alpha}^{\mathrm{con}}(\rn)$ therein; here, for
the simplicity of the presentation, we denote
$JN_{(p,q,0)_\alpha}^{\mathrm{con}}(\rn)$ simply by
$JN_{p,q,\alpha}^{\mathrm{con}}(\rn)$.

\item[\rm (ii)]
We point out that
$JN_{p,q,\alpha}^{\mathrm{con}}(\rn)$ [and hence $JN_{(p,q,s)_\alpha}^{\mathrm{con}}(\rn)$]
is not a ball quasi-Banach function space. Indeed, it is easy to show that,
for any $f\in\mathscr{M}(\rn)$,
$\|f\|_{JN_{p,q,\alpha}^{\mathrm{con}}(\rn)}=0$
if and only if $f$ is a constant almost everywhere on $\rn$,
which implies that $JN_{p,q,\alpha}^{\mathrm{con}}(\rn)$
does not satisfy Definition \ref{Debqfs}(i), and hence
$JN_{p,q,\alpha}^{\mathrm{con}}(\rn)$ is not a ball quasi-Banach function space.
\end{enumerate}
\end{remark}

To obtain the main results of this section, we first introduce a proper subspace
of $JN_{p,q,\alpha}^{\mathrm{con}}(\rn)$,
denoted by $LJN_{p,q,\alpha}^{\mathrm{con}}(\rn)$.

\begin{definition}\label{lxjnp}
Let $p,q\in[1,\fz]$ and $\alpha\in\rr$.
The \emph{space} $LJN_{p,q,\alpha}^{\mathrm{con}}(\rn)$
is defined to be the set of all the $f\in L^q_\loc(\rn)$ such that
\begin{align*}
&\|f\|_{LJN_{p,q,\alpha}^{\mathrm{con}}(\rn)}\\
&\quad:=
\begin{cases}
\displaystyle
\sup_{r\in(0,\infty)}\lf[\int_{\rn}\lf\{|B(y,r)|^{-\alpha}
\lf[\fint_{B(y,r)}\lf\{f(x)-\inf_{\widetilde{x}\in B(y,r)}
f(\widetilde{x})\r\}^q\,dx\r]^{\frac{1}{q}}\r\}^p\,dy\r]^{\frac{1}{p}}\\
\qquad\qquad\qquad\qquad\qquad\qquad\qquad\qquad\quad\text{\ if}\quad  p\in[1,\fz),\\
\displaystyle
\sup_{B\in\mathbb{B}(\rn)}\lf|B\r|^{-\alpha}
\lf[\fint_{B}\lf\{f(x)-\inf_{\widetilde{x}\in B}
f(\widetilde{x})\r\}^q\,dx\r]^{\frac{1}{q}}\quad\text{if}\quad  p=\fz
\end{cases}
\end{align*}
is finite.
\end{definition}

\begin{remark}\label{rem3.2}
Let $q\in[1,\infty)$ and $\alpha\in\rr$. In this case,
$LJN_{\fz,q,\alpha}^{\mathrm{con}}(\rn)$ coincides with
the space $\mathcal{C}_{\alpha,q,*}(\rn)$ which was
introduced by Hu et al. in \cite[Definition 1.4]{HMY2007}
and, moreover, $\mathcal{C}_{0,q,*}(\rn)$ coincides with
the space $\mathrm{BLO}\,(\rn)$ which was first
introduced by Coifman and Rochberg in \cite{CR80}.
\end{remark}

The following lemma
gives an equivalent norm of
$JN_{p,q,\alpha}^{\mathrm{con}}(\rn)$,
which is just a special
case of \cite[Proposition 2.2]{jtyyz1}.

\begin{lemma}\label{AC}
Let $p,q\in[1,\infty)$ and $\alpha\in \rr$.
Then $f\in JN_{p,q,\alpha}^{\mathrm{con}}(\rn)$ if and only if
$f\in L^q_{\mathrm{loc}}(\rn)$ and
\begin{align*}
[f]_{JN_{p,q,\alpha}^{\mathrm{con}}(\rn)}
:=\sup_{r\in(0,\infty)}\lf[\int_{\rn}\lf\{|B(y,r)|^{-\alpha}
\lf[\fint_{B(y,r)}\lf|f(x)-f_{B(y,r)}\r|^q\,dx
\r]^{\frac{1}{q}}\r\}^p\,dy\r]^{\frac{1}{p}}
\end{align*}
is finite. Moreover, for any $f\in JN_{p,q,\alpha}^{\mathrm{con}}(\rn)$,
$$
\|f\|_{JN_{p,q,\alpha}^{\mathrm{con}}(\rn)}
\sim [f]_{JN_{p,q,\alpha}^{\mathrm{con}}(\rn)},
$$
where the positive equivalence constants are independent of $f$.
\end{lemma}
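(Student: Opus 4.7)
My plan is to prove both directions of the equivalence $\|f\|_{JN_{p,q,\alpha}^{\mathrm{con}}(\rn)} \sim [f]_{JN_{p,q,\alpha}^{\mathrm{con}}(\rn)}$ separately, using throughout the following basic oscillation-comparison principle: whenever $E \subset F$ are measurable with $|F| \le C |E|$, one has
\begin{align*}
\left[\fint_E \lf|f-f_E\r|^q\,dx\right]^{\frac{1}{q}}
&\le 2\left[\fint_E \lf|f-f_F\r|^q\,dx\right]^{\frac{1}{q}} \\
&\le 2C^{\frac{1}{q}}\left[\fint_F \lf|f-f_F\r|^q\,dx\right]^{\frac{1}{q}},
\end{align*}
which follows from the constant-replacement inequality
$[\fint_E |f-f_E|^q]^{1/q} \le 2[\fint_E |f-c|^q]^{1/q}$
together with the trivial bound $\int_E \le \int_F$. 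This will let me swap a ball and a cube of comparable volume when one contains the other, at the cost of harmless constants.

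For the forward inequality $\|f\|_{JN_{p,q,\alpha}^{\mathrm{con}}(\rn)} \lesssim [f]_{JN_{p,q,\alpha}^{\mathrm{con}}(\rn)}$, I fix $m \in \zz$ and a (pairwise disjoint) family $\{Q_j\}_j \subset \mathcal{D}_m(\rn)$, and set $r := 2^{-m}\sqrt{n}$. Since the diameter of $Q_j$ equals $r$, for every $y \in Q_j$ one has $Q_j \subset B(y,r)$ and $|B(y,r)| \sim |Q_j|$; the principle above then yields
\begin{align*}
&|Q_j|^{-\alpha}\left[\fint_{Q_j}\lf|f-f_{Q_j}\r|^q\,dx\right]^{\frac{1}{q}} \\
&\quad\lesssim |B(y,r)|^{-\alpha}\left[\fint_{B(y,r)}\lf|f-f_{B(y,r)}\r|^q\,dx\right]^{\frac{1}{q}}.
\end{align*}
Raising to the $p$-th power, integrating in $y$ over $Q_j$ converts the combinatorial weight $|Q_j|$ into an integral, and summing in $j$ using the disjointness gives the bound by $\int_{\rn}\{\cdots\}^p\,dy \le [f]_{JN_{p,q,\alpha}^{\mathrm{con}}(\rn)}^p$, as desired.

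For the reverse inequality $[f]_{JN_{p,q,\alpha}^{\mathrm{con}}(\rn)} \lesssim \|f\|_{JN_{p,q,\alpha}^{\mathrm{con}}(\rn)}$, I fix $r \in (0,\infty)$ and pick $m \in \zz$ with $3r \le 2^{-m} < 6r$. I then employ a three-shift construction: for each $\vec\epsilon \in \{0,1,2\}^n$ let $\mathcal{D}_m^{\vec\epsilon}$ denote the tiling of $\rn$ by pairwise disjoint axis-aligned cubes of side length $2^{-m}$ whose corners lie in $2^{-m}(\zz^n + \vec\epsilon/3)$; crucially $\mathcal{D}_m^{\vec\epsilon} \subset \mathcal{D}_m(\rn)$, since the latter comprises all axis-aligned cubes of side length $2^{-m}$, not only the standard dyadic ones. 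Coordinate-wise, the three cyclic shifts of $y_i \bmod 2^{-m}$ by $\{0, 2^{-m}/3, 2\cdot 2^{-m}/3\}$ are equally spaced on the circle $\mathbb{R}/(2^{-m}\zz)$, so exactly one of them lies in the middle third $[2^{-m}/3, 2\cdot 2^{-m}/3)$; applying this in each coordinate shows that for every $y \in \rn$ there is $\vec\epsilon(y) \in \{0,1,2\}^n$ such that $y$ sits at $\ell^\infty$-distance at least $2^{-m}/3 \ge r$ from the boundary of its cube $Q(y) \in \mathcal{D}_m^{\vec\epsilon(y)}$, which forces $B(y,r) \subset Q(y)$ and $|Q(y)| \sim |B(y,r)|$. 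Partitioning $\rn$ into the level sets $E_{\vec\epsilon} := \{y: \vec\epsilon(y) = \vec\epsilon\}$, invoking the oscillation-comparison principle on each $B(y,r) \subset Q(y)$, and bounding the $y$-integral over $E_{\vec\epsilon}$ by the full sum over the disjoint cubes of $\mathcal{D}_m^{\vec\epsilon}$, I obtain
\begin{align*}
&\int_{\rn}\left\{|B(y,r)|^{-\alpha}\left[\fint_{B(y,r)}\lf|f-f_{B(y,r)}\r|^q\,dx\right]^{\frac{1}{q}}\right\}^p dy \\
&\quad\lesssim 3^n\|f\|_{JN_{p,q,\alpha}^{\mathrm{con}}(\rn)}^p.
\end{align*}
The main obstacle I foresee is the bookkeeping in the three-shift step, in particular verifying that the calibration $2^{-m} \in [3r,6r)$ simultaneously secures $B(y,r) \subset Q(y)$ and $|Q(y)| \sim |B(y,r)|$, and checking that each shifted tiling $\mathcal{D}_m^{\vec\epsilon}$ really sits inside $\mathcal{D}_m(\rn)$; once these are in place the remainder of the proof reduces to the oscillation-comparison principle and elementary disjointness arguments.
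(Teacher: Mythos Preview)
The paper does not supply its own proof of this lemma; it simply records it as a special case of \cite[Proposition~2.2]{jtyyz1}. Your proposal gives a correct self-contained argument, and the ingredients you use (the constant-replacement inequality for oscillations, and the $3^n$ shifted grids, i.e., the ``one-third trick'') are exactly the standard ones underlying that cited result.

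A couple of confirmations on the points you flagged. First, your calibration $3r\le 2^{-m}<6r$ indeed simultaneously forces $B(y,r)\subset Q(y)$ (since the $\ell^\infty$-distance from $y$ to $\partial Q(y)$ is at least $2^{-m}/3\ge r$) and $|Q(y)|=(2^{-m})^n<(6r)^n\sim|B(y,r)|$. Second, because $\mathcal{D}_m(\rn)$ in this paper denotes \emph{all} axis-parallel cubes of side length $2^{-m}$ rather than only the standard dyadic ones, each shifted tiling $\mathcal{D}_m^{\vec\epsilon}$ is an admissible disjoint family in Definition~\ref{Defin.jncc1}, so the final bound by $\|f\|_{JN_{p,q,\alpha}^{\mathrm{con}}(\rn)}^p$ is legitimate. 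One small point worth making explicit in your write-up: Definition~\ref{Defin.jncc1} as printed does not state the pairwise disjointness of $\{Q_j\}_j$, but (as you correctly assume) it is required for the supremum to be finite on any nonconstant $f$, and the source \cite{jtyyz1} imposes it.
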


By Lemma \ref{AC} and \eqref{rem-sub}, we have the following conclusion;
we omit the details here.

\begin{proposition}\label{embed00}
Let $p\in[1,\infty]$, $q\in[1,\infty)$, and $\alpha\in\rr$.
Then $$LJN_{p,q,\alpha}^{\mathrm{con}}(\rn)
\subset JN_{p,q,\alpha}^{\mathrm{con}}(\rn).$$
\end{proposition}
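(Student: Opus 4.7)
The plan is a direct application of \eqref{rem-sub} with $s=0$ together with Lemma \ref{AC} to pass from the ``low-type'' expression $f(x)-\inf_{\widetilde{x}\in B}f(\widetilde{x})$ to the mean-oscillation expression $f(x)-f_{B}$ appearing in the equivalent norm of $JN_{p,q,\alpha}^{\mathrm{con}}(\rn)$. The key observation is that, from Remark \ref{rem-ball-B3}(i), $P_{B}^{(0)}(f)=f_{B}$, so \eqref{rem-sub} reduces, for any $f\in L^{1}_{\mathrm{loc}}(\rn)$ and any ball $B\in\mathbb{B}(\rn)$, to
\begin{align*}
\lf[\fint_{B}\lf|f(x)-f_{B}\r|^{q}\,dx\r]^{\frac{1}{q}}
\ls\lf[\fint_{B}\lf\{f(x)-\inf_{\widetilde{x}\in B}f(\widetilde{x})\r\}^{q}\,dx\r]^{\frac{1}{q}},
\end{align*}
where the implicit positive constant depends only on $q$.

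The first step is to handle the case $p=\infty$. Let $f\in LJN_{\infty,q,\alpha}^{\mathrm{con}}(\rn)$. By the definition of $\|\cdot\|_{LJN_{\infty,q,\alpha}^{\mathrm{con}}(\rn)}$ and the above pointwise-in-$B$ inequality, I multiply both sides by $|B|^{-\alpha}$ and take the supremum over all balls $B\in\mathbb{B}(\rn)$, obtaining
\begin{align*}
\|f\|_{JN_{\infty,q,\alpha}^{\mathrm{con}}(\rn)}
=\|f\|_{\mathcal{C}_{\alpha,q,0}(\rn)}
\ls \|f\|_{LJN_{\infty,q,\alpha}^{\mathrm{con}}(\rn)},
\end{align*}
which yields the desired inclusion in this case.

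For the remaining case $p\in[1,\infty)$, I start from $f\in LJN_{p,q,\alpha}^{\mathrm{con}}(\rn)$ and fix $r\in(0,\infty)$. Applying \eqref{rem-sub} to the ball $B(y,r)$ for each $y\in\rn$, raising to the $p$-th power, multiplying by $|B(y,r)|^{-\alpha p}$, and integrating in $y$ over $\rn$, I obtain
\begin{align*}
\int_{\rn}\lf\{|B(y,r)|^{-\alpha}
\lf[\fint_{B(y,r)}\lf|f(x)-f_{B(y,r)}\r|^{q}\,dx\r]^{\frac{1}{q}}\r\}^{p}\,dy
\ls \lf[\|f\|_{LJN_{p,q,\alpha}^{\mathrm{con}}(\rn)}\r]^{p}.
\end{align*}
Taking the supremum over $r\in(0,\infty)$ and applying the $p^{1/p}$-th root, I arrive at $[f]_{JN_{p,q,\alpha}^{\mathrm{con}}(\rn)}\ls\|f\|_{LJN_{p,q,\alpha}^{\mathrm{con}}(\rn)}$. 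Lemma \ref{AC} then identifies $[f]_{JN_{p,q,\alpha}^{\mathrm{con}}(\rn)}\sim\|f\|_{JN_{p,q,\alpha}^{\mathrm{con}}(\rn)}$, finishing the proof.

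There is essentially no obstacle here: everything is pointwise in the ball $B$, so the argument requires no delicate handling of the supremum over configurations of balls and no assumption on $X$ (the space $JN$-style norms control only one cube/ball at a time, unlike $\mathcal{L}_{X,q,s,d}(\rn)$). The only thing to double-check is that the constants in \eqref{rem-sub} are independent of both $f$ and $B$, which is precisely what that inequality asserts.
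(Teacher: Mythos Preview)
Your proof is correct and follows exactly the approach the paper indicates (the paper merely writes ``By Lemma \ref{AC} and \eqref{rem-sub}, we have the following conclusion; we omit the details here''). You have filled in precisely those omitted details; one trivial wording slip is ``applying the $p^{1/p}$-th root,'' which should read ``taking the $p$-th root'' (equivalently, the $1/p$-th power).
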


Next, we establish the boundedness of Littlewood--Paley $g$ functions
on $JN_{p,q,\alpha}^{\mathrm{con}}(\rn)$.

\begin{theorem}\label{g-bounded}
Let $p\in [1,\infty]$, $q\in(1,\infty)$,
$\alpha\in(-\infty,\frac{\delta}{n})$
with $\delta\in(0,1]$,
and $g$ be the Littlewood--Paley
$g$-function in \eqref{g-4}. For any
$f\in JN_{p,q,\alpha}^{\mathrm{con}}(\rn)$,
$g(f)$ is either infinite everywhere or
finite almost everywhere and, in the latter case,
there exists a positive constant $C$,
independent of $f$, such that
\begin{align}\label{g-b-1}
\lf\|g(f)\r\|_{LJN_{p,q,\alpha}^{\mathrm{con}}(\rn)}
\leq C\|f\|_{JN_{p,q,\alpha}^{\mathrm{con}}(\rn)}.
\end{align}
\end{theorem}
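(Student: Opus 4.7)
The plan is to reduce to the pointwise oscillation estimate in Lemma \ref{lem-g-fz} and then handle the cases $p=\fz$ and $p\in[1,\fz)$ separately. Fix $f\in JN_{p,q,\alpha}^{\mathrm{con}}(\rn)$. If $g(f)\equiv\fz$ on $\rn$ there is nothing to prove, so I assume there exists one $x_0\in\rn$ with $g(f)(x_0)<\fz$.

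First I would establish the dichotomy: if $g(f)(x_0)<\fz$ for one $x_0$, then $g(f)<\fz$ almost everywhere on $\rn$. For any $B\in\mathbb{B}(\rn)$ containing $x_0$, Lemma \ref{lem-g-fz} yields
$$
\lf[\fint_B\lf\{g(f)(x)-\inf_{\wz x\in B}g(f)(\wz x)\r\}^q\,dx\r]^{1/q}
\ls\sum_{k=1}^\fz 2^{-k\dz}\lf[\fint_{2^kB}|f(z)-f_{2^kB}|^q\,dz\r]^{1/q}.
$$
The right-hand side is finite for $f\in JN_{p,q,\alpha}^{\mathrm{con}}(\rn)$: when $p=\fz$ each summand is bounded by $|2^kB|^\alpha\|f\|_{\mathcal{C}_{\alpha,q,0}(\rn)}$ and the tail decays like $2^{k(n\alpha-\dz)}$; when $p\in[1,\fz)$, testing the definition in Definition \ref{Defin.jncc1} on a single cube gives $[\fint_Q|f-f_Q|^q]^{1/q}\ls|Q|^{\alpha-1/p}\|f\|_{JN_{p,q,\alpha}^{\mathrm{con}}(\rn)}$, after which the series still decays geometrically under $\alpha<\dz/n$. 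Hence $g(f)<\fz$ a.e.\ on $B$, and arbitrariness of $B$ yields the global statement.

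For the case $p=\fz$, I observe that $JN_{\fz,q,\alpha}^{\mathrm{con}}(\rn)=\mathcal{C}_{\alpha,q,0}(\rn)$ and $LJN_{\fz,q,\alpha}^{\mathrm{con}}(\rn)$ is characterized by a supremum over $B\in\mathbb{B}(\rn)$. Using the dichotomy, for each such $B$ I can pick $x_0\in B$ with $g(f)(x_0)<\fz$, apply Lemma \ref{lem-g-fz}, bound each summand by $|2^kB|^\alpha\|f\|_{\mathcal{C}_{\alpha,q,0}(\rn)}$, multiply by $|B|^{-\alpha}$, and sum the convergent geometric series $\sum_k 2^{k(n\alpha-\dz)}$. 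For $p\in[1,\fz)$, by Lemma \ref{AC} it suffices to bound the equivalent seminorm $[f]_{JN_{p,q,\alpha}^{\mathrm{con}}(\rn)}$ on the right-hand side of \eqref{g-b-1}. Fixing $r\in(0,\fz)$ and applying Lemma \ref{lem-g-fz} to each $B(y,r)$ (with $x_0\in B(y,r)$ chosen by the dichotomy), I obtain a pointwise-in-$y$ bound on the integrand defining the $LJN$ norm. Minkowski's integral inequality in $L^p(\rn,dy)$ moves the $k$-sum outside the $L^p$ norm, producing
$$
\sum_{k=1}^{\fz}2^{-k\dz}\lf[\int_{\rn}\lf\{|B(y,r)|^{-\alpha}\lf[\fint_{B(y,2^kr)}|f(z)-f_{B(y,2^kr)}|^q\,dz\r]^{1/q}\r\}^p\,dy\r]^{1/p}.
$$
Using the homogeneity $|B(y,r)|^{-\alpha}=2^{kn\alpha}|B(y,2^kr)|^{-\alpha}$, each inner $L^p(\rn,dy)$ factor is majorized by $[f]_{JN_{p,q,\alpha}^{\mathrm{con}}(\rn)}$ (the supremum over $r$ in the equivalent seminorm of Lemma \ref{AC}), and the total becomes $\|f\|_{JN_{p,q,\alpha}^{\mathrm{con}}(\rn)}\sum_k 2^{k(n\alpha-\dz)}$, finite since $\alpha<\dz/n$. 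Taking the supremum over $r$ yields \eqref{g-b-1}.

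The main obstacle will be the Minkowski-plus-rescaling step in the $p\in[1,\fz)$ case: one must apply Lemma \ref{lem-g-fz} pointwise in $y$ (which requires having the dichotomy first), legitimately interchange the $k$-summation with the $L^p(\rn,dy)$ norm, and then convert each resulting integrand at scale $r$ into the $JN$ seminorm at scale $2^kr$ via the homogeneity of $|B(y,r)|$ in $r$. The verification of the dichotomy itself also deserves care, since it relies on the a priori finiteness of the right-hand side of Lemma \ref{lem-g-fz} for $f\in JN_{p,q,\alpha}^{\mathrm{con}}(\rn)$, which is secured by the single-cube test in Definition \ref{Defin.jncc1} combined with the restriction $\alpha<\dz/n$.
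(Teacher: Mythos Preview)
Your proposal is correct and follows essentially the same route as the paper: establish the dichotomy via Lemma \ref{lem-g-fz} together with the single-cube bound from the $JN_{p,q,\alpha}^{\mathrm{con}}$ norm, then for $p\in[1,\fz)$ apply Lemma \ref{lem-g-fz} pointwise in $y$, push the $k$-sum outside by Minkowski in $L^p(\rn,dy)$, rescale via $|B(y,r)|^{-\alpha}=2^{kn\alpha}|2^kB(y,r)|^{-\alpha}$, and control each term by the equivalent seminorm of Lemma \ref{AC}. The paper presents exactly this argument (it only sketches $p=\fz$ as similar), so there is nothing to add.
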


\begin{proof}
Let $p$, $q$, $\alpha$, $\dz$, and $g$ be the same as in the present theorem.
We only prove the case $p\in[1,\infty)$ because the proof of $p=\fz$ is similar.
Let $f\in JN_{p,q,\alpha}^{\mathrm{con}}(\rn)$.
We first claim that, if $g(f)(x_0)<\fz$ for one $x_0\in \rn$, then
$g(f)$ is finite almost everywhere. Indeed, using Lemma \ref{lem-g-fz},
$\alpha<\frac{\dz}{n}<\frac{\dz}{n}+\frac{1}{p}$, and the
definition of $\|\cdot\|_{JN_{p,q,\alpha}^{\mathrm{con}}(\rn)}$,
we find that, for any ball $B\in \mathbb{B}(\rn)$ containing $x_0$,
\begin{align*}
&|B|^{\frac{1}{p}-\alpha}
\lf[\fint_{B}\lf\{g(f)(x)
-\inf_{\widetilde{x}\in B}g(f)(\widetilde{x})\r\}^q\,dx
\r]^{\frac{1}{q}}\\
&\quad\ls \sum_{k=1}^{\infty}\frac{1}{2^{k(-n\alpha+\frac{n}{p}+\delta)}}
\lf|2^{k}B\r|^{\frac{1}{p}-\alpha}
\lf[\fint_{2^{k}B}\lf|f(z)-f_{2^{k}B}\r|^q\,dz\r]^{\frac{1}{q}}\\
&\quad\ls\|f\|_{JN_{p,q,\alpha}^{\mathrm{con}}(\rn)}<\fz,
\end{align*}
which further implies that $g(f)$ is finite almost everywhere on $B$.
By this and the arbitrariness of $B$, we conclude that, for almost every $x\in\rn$,
$g(f)(x)<\fz$ and hence the above claim holds true.

Now, we show that \eqref{g-b-1} holds true
if $g(f)(x_0)<\fz$ for one $x_0\in \rn$.
Indeed, by Lemma \ref{lem-g-fz}, the above claim,
the Minkowski inequality, the H\"older inequality,
$\alpha\in(-\infty,\frac{\delta}{n})$,
and Lemma \ref{AC}, we conclude that, for any $r\in(0,\fz)$,
\begin{align*}
&\lf[\int_{\rn}\lf\{|B(y,r)|^{-\alpha}
\lf[\fint_{B(y,r)}\lf\{g(f)(x)
-\inf_{\widetilde{x}\in B(y,r)}g(f)(\widetilde{x})
\r\}^q\,dx\r]^{\frac{1}{q}}\r\}^p\,dy\r]^{\frac{1}{p}}\\
&\quad\lesssim\lf\{\int_{\rn}\lf[|B(y,r)|^{-\alpha}
\sum_{k=1}^{\infty}\frac{1}{2^{k\delta}}\fint_{2^{k}B(y,r)}
\lf|f(x)-f_{2^{k}B(y,r)}\r|\,dx\r]^p\,dy\r\}^{\frac{1}{p}}\\
&\quad\lesssim\sum_{k=1}^{\infty}\frac{1}{2^{k\delta}}
\lf\{\int_{\rn}\lf[|B(y,r)|^{-\alpha}
\fint_{2^{k}B(y,r)}\lf|f(x)-f_{2^{k}B(y,r)}\r|\,dx\r]^p
\,dy\r\}^{\frac{1}{p}}\\
&\quad\sim\sum_{k=1}^{\infty}\frac{1}{2^{k(\delta-\alpha n)}}
\lf\{\int_{\rn}\lf[\lf|2^{k}B(y,r)\r|^{-\alpha}
\fint_{2^{k}B(y,r)}\lf|f(x)-f_{2^{k}B(y,r)}
\r|\,dx\r]^p\,dy\r\}^{\frac{1}{p}}\\
&\quad\lesssim\sum_{k=1}^{\infty}\frac{1}{2^{k(\delta-\alpha n)}}
\lf\{\int_{\rn}\lf[\lf|2^{k}B(y,r)\r|^{-\alpha}
\fint_{2^{k}B(y,r)}\lf|f(x)-f_{2^{k}B(y,r)}\r|^q\,dx
\r]^\frac{p}{q}\,dy\r\}^{\frac{1}{p}}\\
&\quad\lesssim\|f\|_{JN_{p,q,\alpha}^{\mathrm{con}}(\rn)}.
\end{align*}
This further implies that $$\|g(f)\|_{LJN_{p,q,\alpha}^{\mathrm{con}}(\rn)}
\ls\|f\|_{JN_{p,q,\alpha}^{\mathrm{con}}(\rn)},$$
and hence finishes the proof of Theorem \ref{g-bounded}.
\end{proof}

\begin{remark}\label{g-bounded-remark}
\begin{enumerate}
\item[\rm (i)]
Let $\dz=1$ and $p=\fz$. In this case, from the conclusion of
Theorem \ref{g-bounded}, we deduce the conclusion of \cite[Theorem 1]{Sun04}
because, by Theorem \ref{g-bounded} in this case, we obtain $g(f)\in \mathcal{C}_{\alpha,q,*}(\rn)$ which, together with $\mathcal{C}_{\alpha,q,*}(\rn)\subset \mathcal{C}_{\alpha,q}(\rn)$,
further implies $g(f)\in \mathcal{C}_{\alpha,q}(\rn)$,
the conclusion of \cite[Theorem 1]{Sun04}. Thus, a special
case of Theorem \ref{g-bounded} is also stronger than \cite[Theorem 1]{Sun04}.

\item[\rm (ii)]
Let $\dz=1$, $p=\fz$, and $\alpha=0$. In this case,
Theorem \ref{g-bounded} coincides with \cite[Corollary 1.1]{MY}.

\item[\rm (iii)]
Let $s\in\nn$. In this case, it is still unclear
whether or not the Littlewood--Paley $g$-function
is bounded on $JN_{(p,q,s)_\alpha}^{\mathrm{con}}(\rn)$
(see \cite[Definition 1.3]{jtyyz1} for the precise definition).
\end{enumerate}
\end{remark}

Next, we establish the boundedness of the Lusin-area function $S$ on the space
$JN_{p,q,\alpha}^{\mathrm{con}}(\rn)$, whose proof is just a repetition
of that of Theorem \ref{g-bounded} with both $g$ and Lemma \ref{lem-g-fz}
therein replaced, respectively, by both $S$ and Lemma \ref{lem-s-fz} here;
we omit details here.

\begin{theorem}\label{S-bounded''}
Let $p\in [1,\infty]$, $q\in(1,\infty)$,
$\alpha\in(-\infty,\frac{\dz}{n})$
with $\dz\in(0,1]$,
and $S$ be the Lusin-area function in \eqref{S}.
For any $f\in JN_{p,q,\alpha}^{\mathrm{con}}(\rn)$,
$S(f)$ is either infinite everywhere or finite
almost everywhere and, in the latter case,
there exists a positive constant $C$,
independent of $f$, such that
\begin{align*}
\lf\|S(f)\r\|_{LJN_{p,q,\alpha}^{\mathrm{con}}(\rn)}
\leq C\|f\|_{JN_{p,q,\alpha}^{\mathrm{con}}(\rn)}.
\end{align*}
\end{theorem}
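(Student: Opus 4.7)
The plan is to mirror the proof of Theorem \ref{g-bounded} essentially verbatim, exploiting the fact that Lemma \ref{lem-s-fz} provides for $S$ exactly the same form of mean-oscillation estimate that Lemma \ref{lem-g-fz} gives for $g$. I would split into $p\in[1,\fz)$ and $p=\fz$, writing out only the former since the latter is even simpler, reducing via Remark \ref{rem3.2} to the observation that $LJN_{\fz,q,\alpha}^{\mathrm{con}}(\rn)=\mathcal{C}_{\alpha,q,*}(\rn)$, where the sum in $k$ can be controlled directly by the Campanato norm.

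First I would establish the dichotomy. Assume $S(f)(x_0)<\fz$ for some $x_0\in\rn$, and let $B\in\mathbb{B}(\rn)$ be any ball containing $x_0$. Applying Lemma \ref{lem-s-fz} gives
$$
|B|^{\frac{1}{p}-\alpha}\lf[\fint_{B}\lf\{S(f)(x)-\inf_{\wz x\in B}S(f)(\wz x)\r\}^q\,dx\r]^{\frac{1}{q}}
\ls \sum_{k=1}^{\fz}\frac{1}{2^{k(\dz-n\alpha+\frac{n}{p})}}|2^kB|^{\frac{1}{p}-\alpha}\lf[\fint_{2^kB}\lf|f(z)-f_{2^kB}\r|^q\,dz\r]^{\frac{1}{q}},
$$
after rewriting $|B|^{\frac{1}{p}-\alpha}\sim 2^{-kn(\frac{1}{p}-\alpha)}|2^kB|^{\frac{1}{p}-\alpha}$ and pulling the dilation factor inside each summand. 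Since $\alpha<\dz/n$ implies $\dz-n\alpha+\frac{n}{p}>0$, the geometric tail converges, and each summand is bounded by $\|f\|_{JN_{p,q,\alpha}^{\mathrm{con}}(\rn)}$ by definition; hence the left-hand side is finite, so $S(f)<\fz$ almost everywhere on $B$. The arbitrariness of $B$ (every point lies in a ball containing $x_0$) then yields the claim.

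For the norm bound, I would apply Lemma \ref{lem-s-fz} pointwise to every ball $B(y,r)$, take the $L^p(dy)$ norm in $y$, and move the sum over $k$ outside by the Minkowski inequality in $L^p(\rn)$. After bounding $\fint_{2^kB(y,r)}|f-f_{2^kB(y,r)}|$ by $(\fint_{2^kB(y,r)}|f-f_{2^kB(y,r)}|^q)^{1/q}$ via H\"older's inequality and rewriting $|B(y,r)|^{-\alpha}=2^{\alpha kn}|2^kB(y,r)|^{-\alpha}$, each term is controlled by $2^{-k(\dz-n\alpha)}[f]_{JN_{p,q,\alpha}^{\mathrm{con}}(\rn)}$. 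Summing in $k$ using $\alpha<\dz/n$ and then invoking the norm equivalence $\|\cdot\|_{JN_{p,q,\alpha}^{\mathrm{con}}(\rn)}\sim[\cdot]_{JN_{p,q,\alpha}^{\mathrm{con}}(\rn)}$ from Lemma \ref{AC} yields $\|S(f)\|_{LJN_{p,q,\alpha}^{\mathrm{con}}(\rn)}\ls\|f\|_{JN_{p,q,\alpha}^{\mathrm{con}}(\rn)}$, as required.

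The main technical obstacle, namely the pointwise mean-oscillation estimate, is already absorbed into Lemma \ref{lem-s-fz}; once that lemma is at hand, no new ideas are needed, and the route is structurally parallel to the proof of Theorem \ref{g-bounded}. The only point demanding mild care is ensuring that the exponent $\dz-n\alpha>0$ is sharp enough to handle the $p=\fz$ case by the same geometric-sum argument, so that both cases fit under the single hypothesis $\alpha<\dz/n$ stated in the theorem.
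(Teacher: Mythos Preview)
Your proposal is correct and follows essentially the same approach as the paper: the paper explicitly states that the proof of this theorem is just a repetition of that of Theorem \ref{g-bounded} with $g$ and Lemma \ref{lem-g-fz} replaced by $S$ and Lemma \ref{lem-s-fz}, and omits the details. Your write-up spells out precisely this replacement, including the dichotomy step and the Minkowski/H\"older/Lemma \ref{AC} chain for the norm bound, so there is nothing to add.
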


\begin{remark}
\begin{enumerate}
\item[\rm (i)]
Let $\dz=1$ and $p=\fz$. In this case, from the conclusion of
Theorem \ref{S-bounded''}, we deduce the conclusion of \cite[Theorem 2]{Sun04}
because, by Theorem \ref{S-bounded''} in this case, we obtain $S(f)\in \mathcal{C}_{\alpha,q,*}(\rn)$ which, combined with $\mathcal{C}_{\alpha,q,*}(\rn)\subset \mathcal{C}_{\alpha,q}(\rn)$,
further implies $S(f)\in \mathcal{C}_{\alpha,q}(\rn)$, 
the conclusion of \cite[Theorem 2]{Sun04}. Thus, a special case
of Theorem \ref{S-bounded''} is also stronger than \cite[Theorem 2]{Sun04}.

\item[\rm (ii)]
Let $\dz=1$, $p=\fz$, and $\alpha=0$. In this case,
Theorem \ref{S-bounded''} coincides with \cite[Corollary 1.2]{MY}.

\item[\rm (iii)]
Let $s\in\nn$. In this case,
it is still unclear
whether or not the Lusin-area function
is bounded on $JN_{(p,q,s)_\alpha}^{\mathrm{con}}(\rn)$
(see \cite[Definition 1.3]{jtyyz1} for the precise definition).
\end{enumerate}
\end{remark}

Now, we estimate the Littlewood--Paley $g_\lambda^*$
function on the space $JN_{p,q,\alpha}^{\mathrm{con}}(\rn)$.

\begin{theorem}\label{g*-bounded}
Let $p\in [1,\infty]$, $q\in(1,\infty)$,
$\alpha\in\rr$, $\delta\in(0,1]$,
$\lambda\in (1,\infty)$, and $g_{\lambda}^{*}$ be the same as in \eqref{lam-function}.
For any $f\in JN_{p,q,\alpha}^{\mathrm{con}}(\rn)$,
$g_{\lambda}^{*}(f)$ is either infinite everywhere or finite
almost everywhere and, in the latter case,
there exists a positive constant $C$,
independent of $f$, such that
\begin{align}\label{g*-01x}
\lf\|g_\lambda^{*}(f)\r\|_{LJN_{p,q,\alpha}^{\mathrm{con}}(\rn)}
\leq C\|f\|_{JN_{p,q,\alpha}^{\mathrm{con}}(\rn)}
\end{align}
if either of the following statements holds true:
\begin{enumerate}
\item[\rm (i)]
$\alpha\in(-\fz,\frac{\dz}{2n})$ and $\lambda\in(\max\{1,\frac{2}{q}\},\infty)$;	

\item[\rm (ii)]
$\alpha\in[\frac{\dz}{2n},\frac{\dz}{n})$ and
$\lambda\in(\max\{1,\frac{2}{q}\}+\frac{2}{n},\infty)\cap(2+2\alpha,\infty)$.
\end{enumerate}		
\end{theorem}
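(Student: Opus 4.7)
The plan is to mirror the proof of Theorem \ref{g-bounded}, with $g$ replaced by $g_\lambda^*$ and Lemma \ref{lem-g-fz} replaced by the appropriate case of Lemma \ref{g-lam-01}. I will focus on the case $p\in[1,\infty)$, since the case $p=\infty$ follows by the same argument taking a supremum over balls instead of an $L^p$-norm in $y$, and applying Lemma \ref{AC} is replaced by using the Campanato definition directly.

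First, assume there is one $x_0\in\rn$ with $g_\lambda^*(f)(x_0)<\fz$, and let $B\in\mathbb{B}(\rn)$ be any ball containing $x_0$. In case (i) the hypothesis $\lambda\in(\max\{1,2/q\},\fz)$ allows me to apply Lemma \ref{g-lam-01}(i) to get
\[
\lf[\fint_B\lf\{g_\lambda^*(f)(x)-\inf_{\wz x\in B}g_\lambda^*(f)(\wz x)\r\}^q\,dx\r]^{\frac{1}{q}}
\ls\sum_{k=1}^{\fz}\frac{1}{2^{k\dz/2}}\lf[\fint_{2^kB}\lf|f(z)-f_{2^kB}\r|^q\,dz\r]^{\frac{1}{q}};
\]
in case (ii), since $\alpha\geq\dz/(2n)>0$ gives $2+2\alpha>2$, the hypothesis $\lambda>\max\{1,2/q\}+2/n$ together with $\lambda>2+2\alpha>2$ puts me in the setting of Lemma \ref{g-lam-01}(ii), yielding the analogous bound with $2^{-k\dz/2}$ replaced by $2^{-k\dz}+2^{-kn(\lambda-2)/2}$.

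Multiplying both sides by $|B|^{-\alpha}=2^{kn\alpha}|2^kB|^{-\alpha}$ and checking convergence of $\sum_k 2^{-k(\dz/2-\alpha n)}$ in case (i) (which uses $\alpha<\dz/(2n)$), and of $\sum_k[2^{-k(\dz-\alpha n)}+2^{-k(n(\lambda-2)/2-\alpha n)}]$ in case (ii) (which uses both $\alpha<\dz/n$ and $\lambda>2+2\alpha$), I conclude that
\[
|B|^{-\alpha}\lf[\fint_B\lf\{g_\lambda^*(f)(x)-\inf_{\wz x\in B}g_\lambda^*(f)(\wz x)\r\}^q\,dx\r]^{\frac{1}{q}}\ls\|f\|_{JN_{p,q,\alpha}^{\mathrm{con}}(\rn)}<\fz,
\]
after using Lemma \ref{AC} and the H\"older inequality to bound each term of the series. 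In particular $g_\lambda^*(f)$ is finite almost everywhere on $B$, hence almost everywhere on $\rn$ by the arbitrariness of $B$.

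To establish the norm bound \eqref{g*-01x} for $p\in[1,\fz)$, I fix $r\in(0,\fz)$ and estimate
\[
\lf\{\int_{\rn}\lf[|B(y,r)|^{-\alpha}\lf(\fint_{B(y,r)}\lf\{g_\lambda^*(f)(x)-\inf_{\wz x\in B(y,r)}g_\lambda^*(f)(\wz x)\r\}^q\,dx\r)^{\frac{1}{q}}\r]^p\,dy\r\}^{\frac{1}{p}}.
\]
Invoking the relevant case of Lemma \ref{g-lam-01} inside the $y$-integral, applying the Minkowski integral inequality to pull the $L^p(dy)$-norm inside the sum over $k$, rewriting $|B(y,r)|^{-\alpha}=2^{kn\alpha}|2^kB(y,r)|^{-\alpha}$, and then using the H\"older inequality to pass from the $L^1$-mean of $|f-f_{2^kB(y,r)}|$ to its $L^q$-mean, each term becomes at most a constant multiple of $\|f\|_{JN_{p,q,\alpha}^{\mathrm{con}}(\rn)}$ by Lemma \ref{AC}. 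The resulting geometric series converges by the same arithmetic on $\alpha,\lambda,\dz,n$ discussed above, giving the desired bound.

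The only real subtlety is the bookkeeping in case (ii): two decay rates $2^{-k\dz}$ and $2^{-kn(\lambda-2)/2}$ must independently beat the $2^{kn\alpha}$ weight, which is precisely why both $\alpha<\dz/n$ and $\lambda>2+2\alpha$ are imposed; these are exactly the conditions built into the statement. Once Lemma \ref{g-lam-01} is in hand, no further analytic input is needed beyond the Minkowski and H\"older inequalities and the equivalent norm provided by Lemma \ref{AC}.
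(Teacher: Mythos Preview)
Your proposal is correct and follows essentially the same approach as the paper: reduce to the appropriate case of Lemma \ref{g-lam-01}, establish almost-everywhere finiteness on balls containing $x_0$, and then obtain the norm bound via Minkowski, H\"older, and Lemma \ref{AC}, with the geometric-series bookkeeping in cases (i) and (ii) handled exactly as you describe.

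One small imprecision worth fixing: in the finiteness step for $p<\infty$, your displayed bound $|B|^{-\alpha}[\cdots]\ls\|f\|_{JN_{p,q,\alpha}^{\mathrm{con}}(\rn)}$ cannot hold uniformly in $B$, since Lemma \ref{AC} gives an $L^p(dy)$-integrated estimate, not a pointwise-in-$B$ one. The paper instead multiplies by $|B|^{1/p-\alpha}$ and uses the single-cube bound $|Q|^{1/p-\alpha}\bigl[\fint_Q|f-f_Q|^q\,dx\bigr]^{1/q}\le\|f\|_{JN_{p,q,\alpha}^{\mathrm{con}}(\rn)}$ coming directly from the definition, which produces the convergent series $\sum_k 2^{-k(\delta/2-n\alpha+n/p)}$ (or its case-(ii) analogue). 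Since the purpose of this step is only to conclude finiteness a.e., your argument still reaches the right conclusion; the norm-bound portion is handled correctly and matches the paper.
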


\begin{proof}
Let $p$, $q$, $\alpha$, $\dz$,
and $g_\lambda^*$ be the same as in the present theorem.
The proofs of both (i) and (ii) of the present theorem
strongly depend, respectively, on both (i) and (ii) of Lemma \ref{g-lam-01},
and we only prove (i) because the proof of (ii) is similar.
Also, we only show the case $p\in[1,\infty)$ because the proof of $p=\fz$ is similar.
Let $f\in JN_{p,q,\alpha}^{\mathrm{con}}(\rn)$.
We first claim that, if $g_\lambda^*(f)(x_0)<\fz$ for one $x_0\in \rn$, then
$g_\lambda^*(f)$ is finite almost everywhere. Indeed,
using Lemma \ref{g-lam-01}(i), $\alpha\in (-\infty, \frac{\delta}{2n})$, and the
definition of $\|\cdot\|_{JN_{p,q,\alpha}^{\mathrm{con}}(\rn)}$,
we find that, for any ball $B\subset \rn$ containing $x_0$,
\begin{align*}
&|B|^{\frac{1}{p}-\alpha}\lf[\fint_{B}\lf\{g_\lambda^*(f)(x)
-\inf_{\widetilde{x}\in B}g_\lambda^*(f)(\widetilde{x})\r\}^q
\,dx\r]^{\frac{1}{q}}\\
&\quad\ls \sum_{k=1}^{\infty}\frac{1}{2^{k(-n\alpha+\frac{n}{p}+\frac{\delta}{2})}}
\lf|2^{k}B\r|^{\frac{1}{p}-\alpha}
\lf[\fint_{2^{k}B}\lf|f(z)-f_{2^{k}B}\r|^q\,dz\r]^{\frac{1}{q}}\\
&\quad\ls\|f\|_{JN_{p,q,\alpha}^{\mathrm{con}}(\rn)}<\fz,
\end{align*}
which further implies that $g_\lambda^*(f)$ is finite almost everywhere on $B$.
Using this and the arbitrariness of $B$, we find that, for almost every $x\in\rn$,
$g_\lambda^*(f)(x)<\fz$ and hence the above claim holds true.

Next, we show \eqref{g*-01x}. Indeed, from the above claim,
Lemma \ref{g-lam-01}, the Minkowski inequality,
$\alpha\in (-\infty, \frac{\delta}{2n})\subset (-\fz,\frac{1}{2n})$,
and Lemma \ref{AC}, we deduce that, for any $r\in(0,\fz)$,
\begin{align*}
&\lf[\int_{\rn}\lf\{|B(y,r)|^{-\alpha}
\lf[\fint_{B(y,r)}\lf\{g_\lambda^*(f)(x)
-\inf_{\widetilde{x}\in B(y,r)}
g_\lambda^*(f)(\widetilde{x})\r\}^q\,dx
\r]^{\frac{1}{q}}\r\}^q\,dy\r]^{\frac{1}{p}}\\
&\quad\lesssim\lf[\int_{\rn}\lf\{|B(y,r)|^{-\alpha}
\sum_{k=1}^{\infty}\frac{1}{2^{\frac{k\delta}{2}}}
\lf[\fint_{2^{k}B(y,r)}\lf|f(z)-f_{2^{k}B(y,r)}\r|^q
\,dz\r]^{\frac{1}{q}}\r\}^p\,dy\r]^{\frac{1}{p}}\\
&\quad\lesssim\sum_{k=1}^{\infty}\frac{1}{2^{\frac{k\delta}{2}}}
\lf[\int_{\rn}\lf\{|B(y,r)|^{-\alpha}
\lf[\fint_{2^{k}B(y,r)}\lf|f(z)-f_{2^kB(y,r)}\r|^q
\,dz\r]^{\frac{1}{q}}\r\}^p\,dy\r]^{\frac{1}{p}}\\
&\quad\lesssim\sum_{k=1}^{\infty}\frac{1}{2^{\frac{k\delta}{2}-\alpha kn}}
\lf[\int_{\rn}\lf\{\lf|2^kB(y,r)\r|^{-\alpha}
\lf[\fint_{2^{k}B(y,r)}\lf|f(z)-f_{2^{k}B(y,r)}\r|^q\,dz
\r]^{\frac{1}{q}}\r\}^p\,dy\r]^{\frac{1}{p}}\\
&\quad\lesssim\|f\|_{JN_{p,q,\alpha}^{\mathrm{con}}(\rn)}
\end{align*}
and hence
$$
\lf\|g_\lambda^{*}(f)\r\|_{LJN_{p,q,\alpha}^{\mathrm{con}}(\rn)}
\ls\|f\|_{JN_{p,q,\alpha}^{\mathrm{con}}(\rn)}.
$$
This finishes the proof of Theorem \ref{g*-bounded}.
\end{proof}

\begin{remark}\label{glam*remark}
\begin{enumerate}
\item[\rm (i)]	
Let $\dz=1$, $p=\fz$, and $\alpha=0$. In this case,
Theorem \ref{g*-bounded}(i) coincides with \cite[Corollary 1.3]{MY}.	

\item[\rm (ii)]	
Let $\dz=1$. In this case,
we point out that the boundedness of the Littlewood--Paley
$g_\lambda^*$ function on the Campanato space $\mathcal{C}_{\alpha,q,0}(\rn)$ in
Theorem \ref{g*-bounded} widens the range
$\lambda\in (3+\frac{2}{n},\fz)$ in \cite[Theorem 3]{Sun04} into
$\lambda\in(\max\{1,\frac{2}{q}\},\infty)$ when $\alpha\in(-\fz,\frac{1}{2n})$,
or $\lambda\in(\max\{1,\frac{2}{q}\}+\frac{2}{n},\infty)
\cap(2+2\alpha,\infty)$ when $\alpha\in [\frac{1}{2n},\frac{1}{n})$.

\item[\rm (iii)]
Let $s\in\nn$. In this case, it is still unclear
whether or not the Littlewood--Paley $g_\lambda^*$-function
is bounded on $JN_{(p,q,s)_\alpha}^{\mathrm{con}}(\rn)$
(see \cite[Definition 1.3]{jtyyz1} for the precise definition).
\end{enumerate}
\end{remark}

\noindent\textbf{Acknowledgements}\quad
Hongchao Jia and Yangyang Zhang would like to thank Jin Tao for some
useful discussions on the subject of this article.

\medskip

\noindent\textbf{Author contributions}\quad All the authors 
contributed to this article.

\medskip

\noindent\textbf{Data Availability Statement}\quad Data
sharing not applicable to
this article as no datasets were generated or
analysed during the current study.

\medskip

\noindent\textbf{Declarations}

\medskip

\noindent\textbf{Conflict of interest}\quad The author declares that they have no conflict interests.

\medskip

\noindent\textbf{Code availability}\quad Not applicable.

\bigskip

\noindent Hongchao Jia, Dachun Yang (Corresponding author),
Wen Yuan and Yangyang Zhang

\smallskip

\noindent  Laboratory of Mathematics and Complex Systems
(Ministry of Education of China),
School of Mathematical Sciences, Beijing Normal University,
Beijing 100875, The People Republic of China

\smallskip

\noindent {\it E-mails}: \texttt{hcjia@mail.bnu.edu.cn} (H. Jia)

\noindent\phantom{{\it E-mails:}} \texttt{dcyang@bnu.edu.cn} (D. Yang)

\noindent\phantom{{\it E-mails:}} \texttt{wenyuan@bnu.edu.cn} (W. Yuan)

\noindent\phantom{{\it E-mails:}} \texttt{yangyzhang@mail.bnu.edu.cn} (Y. Zhang)

\end{document}